\documentclass[letterpaper, 11pt, reqno]{amsart}

\usepackage{amsmath,amssymb,amscd,amsthm,amsxtra, esint, color}
\usepackage{mathrsfs} 

\usepackage[dvips]{graphicx}

\headheight=8pt
\topmargin=0pt
\textheight=624pt
\textwidth=432pt
\oddsidemargin=18pt
\evensidemargin=18pt

\allowdisplaybreaks[2]

\sloppy

\hfuzz  = 0.5cm 


\setlength{\pdfpagewidth}{8.50in}
\setlength{\pdfpageheight}{11.00in}


\newtheorem{theorem}{Theorem} [section]

\newtheorem{lemma}[theorem]{Lemma}
\newtheorem{proposition}[theorem]{Proposition}
\newtheorem*{proposition*}{Proposition}
\newtheorem{remark}[theorem]{Remark}

\newtheorem{definition}[theorem]{Definition}
\newtheorem{corollary}[theorem]{Corollary}





\newcommand{\noi}{\noindent}
\newcommand{\Z}{\mathbb{Z}}
\newcommand{\R}{\mathbb{R}}
\newcommand{\C}{\mathbb{C}}
\newcommand{\T}{\mathbb{T}}

\let\Re=\undefined\DeclareMathOperator*{\Re}{Re}
\let\Im=\undefined\DeclareMathOperator*{\Im}{Im}

\newcommand{\al}{\alpha}

\newcommand{\dl}{\delta}

\newcommand{\eps}{\varepsilon}

\newcommand{\ld}{\lambda}

\newcommand{\ft}{\widehat}

\newcommand{\cj}{\overline}

\renewcommand{\o}{\omega}

\newcommand{\les}{\lesssim}
\newcommand{\ges}{\gtrsim}

\newcommand{\jb}[1]
{\langle #1 \rangle}

\renewcommand{\b}{\beta}

\newcommand{\pa}{\partial}

\newcommand{\M}{\mathcal{M}}
\def\e{\varepsilon}

\newtheorem*{ackno}{Acknowledgement}

\numberwithin{equation}{section}
\numberwithin{theorem}{section}

\newsavebox{\foobox}

\begin{document}
\baselineskip = 14pt


\title[Ill-posedness of cubic fractional NLS on the real line]
{Ill-posedness of the cubic nonlinear half-wave equation and other fractional NLS on the real line}

\author[A.~Choffrut and O.~Pocovnicu]
{Antoine Choffrut and Oana Pocovnicu}

\address{
Antoine Choffrut\\
School of Mathematics\\
The University of Edinburgh\\
James Clerk Maxwell Building\\
The King's Buildings\\
Peter Guthrie Tait Road\\
Edinburgh\\
EH9 3FD\\
Scotland and
Mathematics Institute\\
University of Warwick\\
Zeeman Building\\
Coventry CV4 7AL\\
United Kingdom}
 
\email{A.Choffrut@warwick.ac.uk}

\address{
Oana Pocovnicu\\
Department of Mathematics\\ 
Princeton University\\ 
Fine Hall, Washington Rd.\\
Princeton, NJ 08544-1000, USA, and
Department of Mathematics\\
Heriot-Watt University
and The Maxwell Institute for the Mathematical Sciences\\
Edinburgh EH14 4AS\\
United Kingdom}

\email{o.pocovnicu@hw.ac.uk}

\subjclass[2010]{35L05, 35L60}

\keywords{nonlinear wave equation; nonlinear fractional Schr\"odinger equation; ill-posedness; norm inflation}

\begin{abstract}
In this paper, we study ill-posedness of cubic fractional nonlinear Schr\"odinger equations. 
First, we consider the cubic nonlinear half-wave equation (NHW) on $\R$. 
In particular, we prove the following ill-posedness results:
(i) failure of local uniform continuity
of the solution map in $H^s(\R)$ for $s\in (0,\frac 12)$, 
and also for $s=0$ in the focusing case;
(ii) failure of $C^3$-smoothness of the solution map in $L^2(\R)$;
(iii) norm inflation and, in particular, failure of continuity of the solution map
in $H^s(\R)$, $s<0$.
By a similar argument, we also prove norm inflation in negative Sobolev spaces
for the cubic fractional NLS. 
Surprisingly, we obtain norm inflation 
{\it above} the scaling critical regularity in the case of dispersion $|D|^\beta$ with $\beta>2$. 
\end{abstract}

\maketitle

%

\date{\today}

%
%

\baselineskip = 15pt

\section{Introduction}\label{SEC:intro}

In this paper, we consider the fractional nonlinear Schr\"odinger equations (NLS) on $\R$:
\begin{equation}\label{FNLS0}
\begin{cases}
i\pa_t u-|D|^\beta u=\mu |u|^2u\\
u(0)=u_0, 
\end{cases} \quad (t,x)\in \R\times \R,
\end{equation}
where $u$ is complex-valued, $\beta>0$,
$\ft{|D|^\beta f}(\xi)=|\xi|^\beta \ft f(\xi)$ for all $\xi\in\R$, and $\mu \in\{-1,1\}$. 
When $\mu=1$ the equation \eqref{HW} is called defocusing, while 
it corresponds to the focusing equation when $\mu=-1$.
PDEs with a nonlocal dispersion, such as \eqref{FNLS0},
appear in various physical contexts. 
For example, such one-dimensional PDEs were proposed by  
Majda, McLaughlin, and Tabak
as models for wave turbulence in \cite{MMT}. 
Other physical instances 
include water waves \cite{IP}, continuum limits of lattice points \cite{KLS}, and gravitational collapse \cite{ES, FL}.

A major part of this paper is dedicated to the study of ill-posedness of the
following cubic nonlinear half-wave equation (NHW) on $\R$, 
corresponding to the case $\beta=1$
in \eqref{FNLS0}:
\begin{equation}\label{HW}
\begin{cases}
i\pa_t u-|D| u=\mu |u|^2u\\
u(0)=u_0.
\end{cases}
\end{equation}
In particular, we prove different forms of ill-posedness 
depending on the phase space $H^s(\R)$;
see Theorem~\ref{THM:main}.
In the last section, 
we return to the cubic fractional NLS \eqref{FNLS0} and prove a strong form of ill-posedness
in negative Sobolev spaces; see Theorem~\ref{fracNLS}.

We start by recalling briefly the notions of local and global well-posedness for \eqref{FNLS0}.
\begin{definition}\label{DEF:WP}
{\rm 
We say that the Cauchy problem \eqref{FNLS0} is locally well-posed in $H^s(\R)$
if the following two conditions hold:

\noi
(i) For every bounded set $B$ of $H^s(\R)$,
there exist $T>0$ and a Banach space $X_T$ continuously embedded in $C([-T,T];H^s(\R))$
such that if $u_0\in B$, 
then there exists a unique solution $u$ of \eqref{FNLS0} 
on $[-T,T]$ in $X_T$. 

\noi
(ii) The solution map
$\Phi_\mu: u_0\mapsto u$ is continuous from $B$ to $C([-T,T];H^s(\R))$.

If $T$ can be chosen arbitrarily large, 
then we say that the Cauchy problem \eqref{FNLS0} is globally well-posed in $H^s(\R)$. 
}
\end{definition}

\subsection{The cubic nonlinear half-wave equation}
NHW
arises as a Hamiltonian evolution associated to the energy: 
\begin{equation*}
E(u(t)):= \int \frac 12||D|^{\frac 12}u(t,x)|^2+\frac{\mu}{4} |u(t,x)|^4 dx.
\end{equation*}
In particular, the energy is conserved (at least formally)
by the evolution: $E(u(t))=E(u(0))$ for all $t\in\R$.
Notice that the energy $E(u)$ is positive-definite when $\mu=1$ and, moreover, 
it controls the $\dot{H}^{\frac 12}(\R)$-norm of $u$,
while 
it is not sign definite when $\mu=-1$. 
Not surprisingly, as we see in Proposition \ref{PROP:WP} below, 
$\mu$ plays a crucial
role in the global-in-time dynamics of \eqref{HW}.
Another important conservation law of \eqref{HW}
is the mass:
\begin{equation*}
M(u(t)):=\int_\R |u(t,x)|^2 dx=M(u(0)).
\end{equation*}
The symmetries associated to the two conservation laws above,
as guaranteed by Noether's theorem,
are the time translation $u(t,x)\mapsto u(t+t_0,x)$
 and the phase rotation $u(t,x)\mapsto e^{i\theta}u(t,x)$. 
Another crucial symmetry of \eqref{HW}
is the scaling invariance. Namely, if $u$ is a solution of \eqref{HW},
then 
\[u_\ld(t,x)=\ld^{\frac 12}u(\ld t, \ld x)\]
is also a solution of \eqref{HW} with rescaled initial data $u_\ld(0,x)=\ld^{\frac 12}u(0,\ld x)$.
Notice that 
the scaling leaves the $L^2$-norm of a solution invariant,
$\|u_\ld (t, \cdot)\|_{L^2}=\|u(\ld t, \cdot)\|_{L^2}$. 
Namely, \eqref{HW} is said to be
``$L^2$-critical" or ``mass-critical".

We first recall the
well-posedness theory for \eqref{HW}
from \cite{GG12, KLR, Poc2}.
\begin{proposition}[\cite{GG12, KLR, Poc2}, Well-posedness theory for NHW] 
\label{PROP:WP}

{\rm
Let $s\geq \frac 12$.

The Cauchy problem \eqref{HW} is locally well-posed in $H^s(\R)$.
Moreover, for $s>\frac 12$, the solution map $\Phi_\mu$
is Lipschitz continuous on bounded subsets of $H^s(\R)$. 

In the defocusing case ($\mu=1$), \eqref{HW} is globally well-posed in $H^s(\R)$.
In the focusing case ($\mu=-1$), \eqref{HW} is globally well-posed for initial data in $H^s(\R)$
of sufficiently small mass, while there exist finite time blowup solutions of large mass.  
}
\end{proposition}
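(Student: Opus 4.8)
The plan is to treat the four assertions in turn, combining a contraction-mapping scheme, the conservation laws $E$ and $M$, and a virial argument. The local theory rests on the Duhamel formulation
\[
u(t)=e^{-it|D|}u_0-i\mu\int_0^t e^{-i(t-t')|D|}\bigl(|u|^2u\bigr)(t')\,dt',
\]
and on the structural fact that $e^{-it|D|}$ is a unitary group on each $H^s(\R)$ with the non-dispersive, transport-type symbol $e^{-it|\xi|}$ (vanishing second $\xi$-derivative): no regularity is gained from the linear flow, so the argument must rely on the multiplicative structure of the phase space. For $s>\frac12$ the embedding $H^s(\R)\hookrightarrow L^\infty(\R)$ makes $H^s(\R)$ a Banach algebra and the fractional Leibniz (Kato--Ponce) inequality yields $\bigl\||u|^2u\bigr\|_{H^s}\les\|u\|_{L^\infty}^2\|u\|_{H^s}\les\|u\|_{H^s}^3$. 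I would then run a standard fixed-point argument for the Duhamel map on a ball of $C([-T,T];H^s(\R))$ with $T\sim(1+\|u_0\|_{H^s})^{-2}$; the same cubic bound applied to the difference of two solutions gives uniqueness and, on bounded subsets of $H^s(\R)$, Lipschitz --- indeed real-analytic, the nonlinearity being polynomial --- dependence of $\Phi_\mu$ on $u_0$. This produces the Banach space $X_T=C([-T,T];H^s(\R))$ required by Definition~\ref{DEF:WP}.

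The endpoint $s=\frac12$ is the delicate point, and the one I expect to be the main obstacle: $H^{\frac12}(\R)$ is \emph{not} an algebra (it embeds into $\mathrm{BMO}$ and into every $L^p$ with $p<\infty$, but not into $L^\infty$), and because the half-wave symbol has vanishing second $\xi$-derivative there are no Strichartz estimates of dispersive origin to restore the lost $L^\infty$ control. Following \cite{GG12, KLR, Poc2}, I would instead close a refined iteration in an auxiliary space measuring some space-time integrability of $u$ and $|D|^{\frac12}u$ --- still available, despite the absence of genuine dispersion, from the decomposition $e^{-it|D|}=P_+\circ(\text{translation by }t)+P_-\circ(\text{translation by }{-t})$, which acts boundedly on $L^p_x$ for $1<p<\infty$ uniformly in $t$ --- and control the borderline contribution of $|u|^2u$ through a finer analysis of its resonant/non-resonant structure (for instance a normal-form reduction, i.e.\ integration by parts in time, of the non-resonant frequency interactions, the resonant ones being tamed with help from the conserved $\dot{H}^{\frac12}$-energy). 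At this regularity only \emph{continuity} of the solution map should be expected, and it would be obtained by approximating rough data by smooth data in the spirit of Bona--Smith rather than by a contraction.

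When $\mu=1$ the energy $E(u)=\frac12\||D|^{\frac12}u\|_{L^2}^2+\frac14\|u\|_{L^4}^4$ is coercive, so conservation of $E$ and $M$ gives an a priori bound $\|u(t)\|_{H^{\frac12}}\les_{u_0}1$; since the local existence time depends only on this norm, the $H^{\frac12}$-solution is global. For $s>\frac12$ I would propagate the extra regularity by feeding a Brezis--Gallouet-type logarithmic inequality of the form $\|u(t)\|_{L^\infty}\les\|u(t)\|_{H^{\frac12}}\bigl(1+\log(2+\|u(t)\|_{H^s}/\|u(t)\|_{H^{\frac12}})\bigr)^{1/2}$ into the energy estimate $\frac{d}{dt}\|u(t)\|_{H^s}^2\les\|u(t)\|_{L^\infty}^2\|u(t)\|_{H^s}^2$ and invoking Gronwall, which gives at most double-exponential --- hence finite --- growth on any bounded time interval. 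When $\mu=-1$ and the mass is small, the sharp Gagliardo--Nirenberg inequality $\|u\|_{L^4}^4\le\frac{2}{\|Q\|_{L^2}^2}\|u\|_{L^2}^2\||D|^{\frac12}u\|_{L^2}^2$, with optimal constant attained by the ground state $Q$, gives $E(u)\ge\frac12\||D|^{\frac12}u\|_{L^2}^2\bigl(1-\|u_0\|_{L^2}^2/\|Q\|_{L^2}^2\bigr)$; hence if $\|u_0\|_{L^2}<\|Q\|_{L^2}$ the $\dot{H}^{\frac12}$-norm stays bounded and global well-posedness follows as before.

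Finally, for finite-time blowup of suitable large-mass focusing data I would argue by a (localized) virial/concavity estimate. Writing $A=-\frac{i}{2}(x\pa_x+\pa_x x)$ for the generator of the $L^2$-isometric dilations, one has $[|D|,A]=-i|D|$, and a short computation using this together with the conservation of $E$ gives, for $\mu=-1$,
\[
\frac{d}{dt}\langle u(t),Au(t)\rangle=-\||D|^{\frac12}u(t)\|_{L^2}^2-\tfrac12\|u(t)\|_{L^4}^4=-2\||D|^{\frac12}u(t)\|_{L^2}^2+2E(u_0)\le 2E(u_0).
\]
Hence for smooth, spatially localized data with $E(u_0)<0$ --- which by the sharp Gagliardo--Nirenberg inequality above forces $\|u_0\|_{L^2}>\|Q\|_{L^2}$, that is, large mass --- the quantity $\langle u(t),Au(t)\rangle=\Im\int x\,\bar u\,\pa_x u\,dx$ must decrease at a definite linear rate while remaining bounded by $\|xu(t)\|_{L^2}\||D|u(t)\|_{L^2}$; these two facts cannot both hold for all time, so the maximal time of existence is finite. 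The one technical nuisance is the nonlocality of $|D|$, which obliges one to run this computation with a truncated weight $\varphi_R(x)\approx x^2$ (so that the weighted quantities stay finite) and to absorb the ensuing commutator errors --- the standard localized-virial argument in the spirit of Ogawa--Tsutsumi and Martel.
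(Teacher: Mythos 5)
The paper does not prove this proposition at all: it is recalled from \cite{GG12, KLR, Poc2} and used as background, so there is no internal proof to compare against. Your reconstruction of the local and global theory is the standard one from those references and is sound: contraction in $C([-T,T];H^s)$ using unitarity of $e^{-it|D|}$ and the algebra property of $H^s(\R)$ for $s>\frac12$ (whence Lipschitz, indeed analytic, dependence); a compactness/Bona--Smith treatment of the endpoint $s=\frac12$, consistent with the proposition restricting the Lipschitz claim to $s>\frac12$; coercivity of $E$ together with mass conservation and a Brezis--Gallou\"et/Gronwall propagation of higher regularity in the defocusing case; and the sharp Gagliardo--Nirenberg inequality with the ground-state constant for small-mass focusing data.

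The finite-time blowup step, however, has a genuine gap. First, a computational point: for the cubic half-wave both terms of $E$ are homogeneous of degree one under the $L^2$-invariant dilation generated by $A$, so the dilation identity is exact, $\frac{d}{dt}\langle u,Au\rangle=\||D|^{\frac12}u\|_{L^2}^2+\frac{\mu}{2}\|u\|_{L^4}^4=2E(u_0)$ (up to an overall sign convention); your version, with an extra definite-sign term surviving, has the nonlinear contribution wrong. More seriously, the conclusion you draw does not follow: linear growth of $|\langle u,Au\rangle|$ is perfectly compatible with global existence, because the quantity $\|xu(t)\|_{L^2}\|\pa_x u(t)\|_{L^2}$ that you use to bound it is not a priori bounded --- it itself grows at least linearly. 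Already for the \emph{free} flow one has $\frac{d}{dt}\langle u,Au\rangle=\||D|^{\frac12}u_0\|_{L^2}^2>0$ for all time while $\|xu(t)\|_{L^2}\sim t$, since $e^{-it|D|}$ acts by translations on $\Pi_\pm u_0$; so "these two facts cannot both hold for all time" is simply false. The Glassey mechanism for NLS works because $\frac{d}{dt}\|xu\|_{L^2}^2$ is a multiple of $\langle u,Au\rangle$ (as $[-\Dl,x^2]$ is proportional to $A$), which turns the dilation identity into concavity of the nonnegative quantity $\|xu\|_{L^2}^2$; for $|D|$ the commutator $[|D|,x^2]$ is a different, nonlocal operator, this chain breaks, and no nonnegative quantity with second time derivative $\leq 4E(u_0)<0$ is produced. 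Localizing the weight, which is the only difficulty you flag, does not address this; it is precisely why virial blowup proofs are delicate for fractional dispersion $|D|^\beta$ with $\beta<2$, and the construction of blowup solutions in \cite{KLR} rests on a substantially different mechanism rather than on this two-line computation.
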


\subsection{Ill-posedness of the cubic nonlinear half-wave equation}

One of our goals in the present paper
is to complete the local-in-time study of the Cauchy problem
for NHW on $\R$.
In particular, we complement Proposition  \ref{PROP:WP}
by proving various ill-posedness results
in $H^s(\R)$ for $s< \frac 12$.

Given $s\in\R$, in the following we denote by 
$\Phi_\mu: u_0\mapsto u$ the a priori defined solution map of \eqref{HW} 
from $H^s\cap \mathcal S(\R)$ to $C([-T,T];H^s(\R)\cap \mathcal S(\R))$ (see Definition \ref{DEF:WP} above). 
(Here, $\mathcal S(\R)$ denotes the space of Schwartz functions on $\R$.)
Given $t\in\R$, we also denote by $\Phi_\mu(t):u_0\mapsto u(t)$ the solution map acting on 
$H^s(\R)\cap \mathcal S(\R)$.

\begin{theorem}[Ill-posedness of NHW on $\R$]\label{THM:main} \textcolor{white}a\\
\noi
{\rm (i)} {\rm(}Failure of local uniform continuity, $0<s<\frac 12${\rm)}. 
Let $0<s<\frac 12$.
$\Phi_\mu$ fails to be uniformly continuous on bounded sets of $H^s(\R)$, for $\mu\in\{-1,1\}$. 
More precisely, given $0<\eps\ll 1$, there exist global solutions of \eqref{HW}
$u_1^\eps$ and $u_2^\eps$ such that $\|u_1^\e(0)\|_{H^s}\les 1$, $\|u_2^\e(0)\|_{H^s}\les 1$,
\begin{equation*}
\lim_{\eps\to 0}\| u_1^\eps(0)- u_2^\eps(0)\|_{H^s}=0,
\end{equation*}
and
\begin{equation*}
\liminf_{\eps\to 0}\| u_1^\eps- u_2^\eps\|_{L^\infty([0,T];H^s)}\gtrsim 1 \quad \text { for all } \quad T>0.
\end{equation*}

\medskip
\noi
{\rm (ii)} {\rm(}Focusing {\rm NHW}, $s=0${\rm)}. $\Phi_{-1}$ fails to be uniformly continuous on bounded sets of $L^2(\R)$. More precisely, a statement similar to that in {\rm (i)} holds with $\mu=-1$ and $s=0$. 

\medskip

\noi
{\rm (iii)} {\rm(}Failure of $C^3$-smoothness, $s=0${\rm)}. 
Fix $0< t\leq 1$.
$\Phi_\mu(t)$ fails to be $C^3$-smooth on $L^2(\R)$ for $\mu\in\{-1,1\}$.

\medskip

\noi
{\rm (iv)} {\rm(}Norm inflation in $H^s$, $s<0${\rm)}. Let $s<0$. 
Given $0<\eps \ll 1$, there exist a solution $u^\eps \in C(\R; H^{\infty}(\R))$ and 
$0<t_\eps<\eps$ such that 
\[ \|u^\eps(0)\|_{H^s(\R)}<\eps \qquad \text{and} \qquad \|u^\eps(t_\eps)\|_{H^s(\R)}>\frac{1}{\eps}.\] 

In particular, $\Phi_\mu$ fails to be continuous at zero in $H^s(\R)$, $s<0$, for $\mu\in\{-1,1\}$. 
\end{theorem}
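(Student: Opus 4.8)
The plan is to prove all four parts via explicit families of solutions built on the two-parameter scaling/modulation structure of the equation, following the now-standard approach to ill-posedness of $L^2$-critical dispersive equations. The starting observation is that plane-wave-type solutions of NHW can be constructed explicitly: one looks for solutions of the form $u(t,x)=a\, e^{i(Nx+\omega t)}\psi(t,x)$, where $N$ is a large frequency parameter and $\psi$ is a slowly-varying profile. Because the dispersion symbol $|D|$ is positively homogeneous of degree one, $|D|$ acting on $e^{iNx}\psi$ with $\psi$ of frequency $\ll N$ is essentially $N\cdot e^{iNx}\psi$ plus a lower-order transport term $e^{iNx}(-i\partial_x)\psi$ (from $|\xi|\approx N+\text{sign}(N)(\xi-N)$ near $\xi=N$), so modulo this transport the equation for $\psi$ reduces to the ODE $i\partial_t\psi=\mu|a|^2|\psi|^2\psi$, solved by $\psi(t)=\psi_0 e^{-i\mu|a|^2|\psi_0|^2 t}$. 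Combining the high-frequency modulation $e^{iNx}$ with the $L^2$-critical scaling $u_\lambda(t,x)=\lambda^{1/2}u(\lambda t,\lambda x)$ and a Galilean-type frequency shift gives a sufficiently rich family; alternatively one rescales a fixed compactly supported profile to frequency $N$ and small spatial scale.

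For part (i), $0<s<1/2$, I would take two such wave-packet solutions $u_1^\eps,u_2^\eps$ with the same frequency $N=N(\eps)\to\infty$ but slightly different amplitudes $a_1,a_2$ (or slightly different profiles), normalized so that the $H^s$ norm of the data is $\lesssim 1$; since $s>0$ one loses a factor of $N^s$ relative to $L^2$, which one compensates by taking the spatial support of size $\sim N^{-1}$ and amplitude $\sim N^{s-1/2}$ or similar. One arranges $\|u_1^\eps(0)-u_2^\eps(0)\|_{H^s}\to 0$ (small amplitude difference) while the nonlinear phases $e^{-i\mu|a_j|^2 t}$ decohere: by time $T$ the phase difference is $\sim(|a_1|^2-|a_2|^2)\cdot(\text{time scale})$, and by tuning the parameters this is made $\gtrsim 1$ for every fixed $T>0$, forcing $\|u_1^\eps-u_2^\eps\|_{L_T^\infty H^s}\gtrsim 1$. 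The key technical point, and the main obstacle throughout, is controlling the error between the approximate (ansatz) solution and the true solution of NHW: one must run a perturbative/energy argument showing that the transport term and the frequency-localization errors do not destroy the coherence of the approximate profile on the relevant time scale. This requires the local well-posedness theory of Proposition~\ref{PROP:WP} (to have genuine solutions in $H^{1/2}$ or higher), together with a careful estimate, in a suitable $X_T$ norm, of the Duhamel remainder; the homogeneity of $|D|$ near frequency $N$ and the smallness of the time interval are what make this work.

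For part (ii), $s=0$, $\mu=-1$, the obstruction above disappears: at $L^2$-critical regularity one can use genuine \emph{soliton} (traveling-wave) solutions of the focusing NHW, whose existence is part of the focusing theory referenced in Proposition~\ref{PROP:WP} and the cited works, together with the Galilean-type symmetry, to produce two solitons with nearby $L^2$ data but decohering phases — this is the classical Kenig–Ponce–Vega / Burq–Gérard–Tzvetkov mechanism and needs only the explicit soliton plus scaling, so there is little error to control. For part (iii), failure of $C^3$ smoothness of $\Phi_\mu(t)$ on $L^2$, I would argue by contradiction: if $\Phi_\mu(t)$ were $C^3$ at $0$, then the third Fréchet derivative would be given by the first Picard iterate $u^{(3)}(t)=-i\mu\int_0^t e^{-i(t-t')|D|}\big(|e^{-it'|D|}u_0|^2 e^{-it'|D|}u_0\big)dt'$, and one would have the trilinear bound $\|u^{(3)}(t)\|_{L^2}\lesssim\|u_0\|_{L^2}^3$; I would then plug in $u_0=\ft{\ind}_{[N,N+1]}+\ft{\ind}_{[-N-1,-N]}$-type data (a single high-frequency bump, or two resonant bumps) and compute the output on the Fourier side. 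Because the half-wave resonance function $|\xi|-|\xi_1|+|\xi_2|-|\xi_3|$ degenerates badly for nearly-parallel high frequencies (unlike the Schrödinger case where it is genuinely quadratic and elliptic), the integral $\int_0^t$ produces a factor that grows with $N$, contradicting the uniform trilinear estimate; the main work is the stationary-phase/explicit computation of this lower bound.

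For part (iv), norm inflation in $H^s$ with $s<0$, I would use the same wave-packet ansatz but now exploit that negative-order $H^s$ \emph{gains} a factor $N^{-|s|}$ from the high frequency: take data $u^\eps(0)=u_\lambda(0)$ of the rescaled/modulated form with frequency $N\to\infty$ chosen so that $\|u^\eps(0)\|_{H^s}\sim N^{s}\lambda^{\text{(something)}}<\eps$, while the nonlinear interaction transfers energy to a \emph{lower} output frequency (or simply the profile grows in amplitude on a short time $t_\eps<\eps$) so that at time $t_\eps$ the solution has an order-one (in fact $N^{\text{positive}}$) low-frequency component, giving $\|u^\eps(t_\eps)\|_{H^s}>1/\eps$. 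Concretely, one chooses the amplitude large enough that the nonlinear ODE $i\partial_t\psi=\mu|u|^2 u$ moves the profile substantially on the time scale $t_\eps$, generates high harmonics / a zero-frequency output via $|u|^2u$, and the norm inflation comes from reading that output in $H^s$. As before, the only real difficulty is the approximation argument showing the true NHW solution tracks the ansatz on $[0,t_\eps]$ in $H^\infty$ (so that $u^\eps\in C(\R;H^\infty)$, using persistence of regularity from Proposition~\ref{PROP:WP} for smooth data), and then deducing the failure of continuity of $\Phi_\mu$ at $0$ in $H^s$ as an immediate corollary. The same ansatz, with $|D|$ replaced by $|D|^\beta$ and the homogeneity degree adjusted, will yield Theorem~\ref{fracNLS}, with the surprising $\beta>2$ phenomenon coming from the fact that for $\beta>2$ the scaling-critical regularity is more negative than the threshold at which this mechanism operates.
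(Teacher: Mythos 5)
Your overall architecture (explicit families of solutions for (i)--(ii), unboundedness of the third Picard iterate for (iii), high-to-low frequency cascade for (iv)) is the right one, but each part contains a step that would fail as written. For (i), the central gap is that the effective model for NHW on the nonlinear time scale is \emph{not} the ODE $i\partial_t\psi=\mu|a|^2|\psi|^2\psi$ but the cubic Szeg\H{o} equation $i\partial_t V=\Pi_+(|V|^2V)$: for $\Pi_+$-supported data the half-wave flow is \emph{exact} transport, the only non-resonant piece of $|u|^2u$ is its $\Pi_-$ part, and the remaining resonant part $\Pi_+(|u|^2u)$ acts at order one precisely on the time scale $\sim(\text{amplitude})^{-2}$ that your amplitude-phase decoherence requires. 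The dispersionless-ODE approximation is only valid for logarithmically long times (this is exactly the obstruction the paper records in Remark \ref{defocNHW_L2}), so the ``perturbative/energy argument'' you defer to cannot close. The paper's route is to take explicit traveling waves of the Szeg\H{o} equation with amplitudes $\eps$ and $\eps(1+|\log\eps|^{-1/2})$, use the polynomially-long-time approximation theorem of \cite{Poc2} (Proposition \ref{PROP:approx}, valid up to $t\sim\delta\eps^{-2}|\log\eps|$), and then rescale by $\lambda=\eps^{-1/s}$; the decoherence comes from the \emph{speed} difference of the traveling waves, not from an amplitude-dependent phase. For (ii), NHW has no Galilean (or Lorentz) symmetry, so the Kenig--Ponce--Vega phase-decoherence mechanism you invoke is unavailable; what works is the genuine family of traveling waves $Q_\beta$ with speeds $\beta_1,\beta_2\to1^-$, whose \emph{spatial} separation drives the $L^2$ distance up, and making this quantitative requires the nontrivial bounds of Lemma \ref{LEM:Qb} (Lipschitz dependence on $\beta$ with constant $(1-\beta)^{-1}$, pointwise decay, nondegeneracy of $\|Q_\beta\|_{L^2}$ as $\beta\to1$) imported from \cite{GLPR} --- there is real work here, not ``little error to control.''

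For (iii), your test data would not produce the contradiction: width-one Fourier bumps at frequencies $\pm N$ satisfy $\|u_0\|_{L^2}\sim\|u_0\|_{L^\infty}\sim1$, hence $\||u_0|^2u_0\|_{L^2}\lesssim\|u_0\|_{L^2}^3$ with no growth in $N$; moreover on positive frequencies the resonance function $|\xi|-|\xi_1|+|\xi_2|-|\xi_3|$ vanishes identically, so the time integral contributes exactly a factor $t$ and never a factor growing with $N$. The unboundedness must instead come from spatial concentration: the paper takes $f_\eps=1/(x+i\eps)$, for which $\|\Pi_+(|f_\eps|^2f_\eps)\|_{L^2}\sim\eps^{-5/2}\gg\|f_\eps\|_{L^2}^3\sim\eps^{-3/2}$, combined with the complete resonance on $\Pi_+$. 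For (iv), a single modulated wave packet cannot work: the gauge-invariant nonlinearity applied to $ae^{iNx}\psi$ returns $|a|^2a\,e^{iNx}|\psi|^2\psi$, still at carrier frequency $N$ (up to the bandwidth of $\psi$), and $|\psi|$ is pointwise conserved by the ODE, so neither a zero-frequency output nor amplitude growth occurs. One needs two bumps, at frequencies $N$ and $2N$, so that the interaction $\xi_1-\xi_2+\xi_3=N-2N+N=0$ creates a genuinely low-frequency output of size $tR^3A^2$; and in place of an ansatz-plus-approximation argument the paper controls the full Picard expansion in a modulation space $M_A$, showing that $U_3[\phi]$ dominates both $U_1[\phi]$ and the tail $\sum_{k\geq5}U_k[\phi]$ for a suitable choice of $R$, $A$, $T$ as powers of $N$.
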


In the periodic setting,
Georgiev, Tzvetkov, and Visciglia \cite{GTV} 
recently proved the failure of local uniform continuity of the solution map
of NHW
in $H^s(\T)$, $s\in (\frac 14, \frac 12)$. 
Our proof of Theorem~\ref{THM:main} (i)
is inspired by their work.
There are, however, some 
differences discussed in Subsection \ref{subsec:Rem} below,
allowing us to cover a larger range of regularities $s\in (0,\frac 12)$.

In view of Definition \ref{DEF:WP},
we note that parts (i)-(iii) of Theorem~\ref{THM:main}  refer to a `mild' form of ill-posedness, 
namely the failure of local uniform continuity or $C^3$-smoothness of the solution map. 
In particular, it might still be possible to construct a locally continuous flow in $H^s(\R)$
for $s\in [0,\frac 12)$. 
Parts (i)-(iii), on the other hand, show that
such a local well-posedness result in $H^s(\R)$, $s\in [0,\frac 12)$, 
cannot be proved using a
fixed point argument.

Theorem~\ref{THM:main} (iii) states that the solution map of the defocusing NHW
fails to be $C^3$-smooth on $L^2(\R)$.
We do not know, however, whether the solution map 
fails to be uniformly continuous on bounded sets of $L^2(\R)$. 
This remains an interesting open question.
See Remark \ref{defocNHW_L2} for a detailed discussion.

As remarked in the statement of Theorem~\ref{THM:main},
the norm inflation in part (iv) is a stronger property than
the failure of continuity of the solution map $\Phi_\mu$ at zero in $H^s(\R)$. 
Indeed, for the failure of continuity of $\Phi_\mu$ at zero in $H^s(\R)$
it suffices to find a solution $u^\eps \in C(\R, H^{\infty}(\R))$ and $0<t_\eps<\eps$ such that 
$\|u^\eps (0)\|_{H^s}<\eps$
and $\|u^\eps (t_\eps)\|_{H^s}\ges 1$. 
(Here, $H^\infty(\R)=\cap_{s>0}H^s(\R)$.)

\medskip

\subsection{Relation to the cubic Szeg\H o equation}
An important feature of the nonlinear half-wave equation \eqref{HW}
is the fact that the resonant equation associated to it is  
a completely integrable model, namely the cubic Szeg\H{o} equation:
\begin{align}\label{SZ}
\begin{cases}
i\pa_t V=\Pi_+(|V|^2V)\\
V(0)=V_0,
\end{cases}
\end{align}
where $\ft{\Pi_+f}(\xi)=\ft{f}(\xi)\pmb{1}_{\xi\geq 0}$
is the Szeg\H{o} projector. 
In particular, 
the dynamics of NHW can be well approximated, 
for a long time, by that of the Szeg\H{o} equation.
See \cite{Poc2} and Proposition \ref{PROP:approx} below.
This approximation plays an important role in the proof of Theorem \ref{THM:main} (i). 

We recall that the cubic Szeg\H{o} equation was introduced by G\'erard and Grellier in 
\cite{GG10} on $\T$,
and was studied on $\R$ by the second author in \cite{Poc0, Poc1}.
The Szeg\H{o} equation on $\R$ is globally well-posed in
\[H^s_+(\R):=L^2_+(\R)\cap H^s(\R), \qquad s\geq \tfrac 12,\]
where 
$L^2_+(\R)= \big \{f\in L^2(\R): \text{supp}\, \ft f\subset [0,\infty)\big\}$.
We remark that, via a Paley-Wiener theorem,
$L^2_+(\R)$ can also be identified with the 
Hardy space of holomorphic functions in the upper-half plane 
$\C_+:=\{z: \Im z>0\}$. 

As a byproduct of the proof of Theorem \ref{THM:main},
we also have the following ill-posedness result 
for the cubic Szeg\H{o} equation \eqref{SZ}. 
\begin{proposition}[Ill-posedness of the cubic Szeg\H{o} equation on $\R$]\label{final rem:SZ}
\textcolor{white}a\\
{\rm
\noi
{\rm (i)} Let $0\leq s<\frac 12$. The solution map of the cubic Szeg\H{o} equation
fails to be uniformly continuous on bounded sets in $H^s_+(\R)$.

\smallskip
\noi
{\rm (ii)} Let $s<0$. The cubic Szeg\H{o} equation \eqref{SZ}
has the norm inflation property in $H^s_+(\R)$
{\rm(}in 
the sense of part (iv) of Theorem \ref{THM:main}{\rm)}. 
In particular, the solution map of the cubic Szeg\H{o} equation \eqref{SZ} 
fails to be continuous at zero in $H^s_+(\R)$, $s<0$.
}
\end{proposition}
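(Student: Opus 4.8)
The plan is to extract both statements directly from the constructions used in the proof of Theorem~\ref{THM:main}, observing that the data and solutions there can be taken inside the Hardy space $L^2_+(\R)$, so that the Szeg\H o flow governs them exactly rather than approximately. For part (i), I would revisit the pair of solutions $u_1^\eps, u_2^\eps$ built for Theorem~\ref{THM:main}(i)--(ii): if their initial data $u_j^\eps(0)$ have Fourier support in $[0,\infty)$ (e.g.\ suitable modulated, rescaled profiles $e^{iN_\eps x}\phi(\cdot)$ with $N_\eps$ large enough that the bump lies in the positive frequencies, then truncated), then the corresponding NHW solutions are well approximated by the Szeg\H o solutions with the same data via Proposition~\ref{PROP:approx}; since the Szeg\H o equation preserves $L^2_+$, these approximants are genuine $H^s_+$ solutions. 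The same two quantitative facts used before — vanishing of $\|u_1^\eps(0)-u_2^\eps(0)\|_{H^s}$ and the lower bound $\liminf \|V_1^\eps-V_2^\eps\|_{L^\infty([0,T];H^s)}\gtrsim 1$ for the Szeg\H o flow — then give the failure of uniform continuity on bounded subsets of $H^s_+(\R)$ for $0\le s<\tfrac12$. In fact, since we now work with the Szeg\H o evolution itself, the long-time approximation step is not even needed: the separation is read off from the explicit Szeg\H o dynamics.

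For part (ii), I would similarly recycle the norm inflation construction from Theorem~\ref{THM:main}(iv). That argument produces, for each small $\eps$, a smooth NHW solution $u^\eps$ with $\|u^\eps(0)\|_{H^s}<\eps$ and $\|u^\eps(t_\eps)\|_{H^s}>1/\eps$ for some $0<t_\eps<\eps$; the mechanism is a short-time expansion of the Duhamel iteration in which the first Picard iterate already dominates. I would check that the building-block data can be chosen with nonnegative Fourier support (again a high-frequency modulation of a fixed bump, or a sum of such), so that the relevant solution is the Szeg\H o solution with that data, which lives in $C(\R;H^\infty_+(\R))$ by global well-posedness of Szeg\H o in $H^{1/2}_+$ together with persistence of regularity. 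The same expansion — now for the Szeg\H o Duhamel formula $V(t)=V_0 - i\int_0^t \Pi_+(|V|^2V)\,ds$, whose Picard iterates differ from those of NHW only by harmless linear factors $e^{-it|D|}$ that are isometries on every $H^s$ — yields $\|V^\eps(0)\|_{H^s_+}<\eps$ and $\|V^\eps(t_\eps)\|_{H^s_+}>1/\eps$, hence norm inflation and failure of continuity at zero in $H^s_+(\R)$ for $s<0$.

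The main obstacle I anticipate is bookkeeping rather than conceptual: one must ensure that the Fourier-truncation or modulation needed to land in $L^2_+$ does not degrade the two estimates that drive each argument. For part (i) this means that restricting $\ft{\phi}$ to $[0,\infty)$ (or choosing $\phi$ already band-limited to positive frequencies before modulating) changes the $H^s$ norms of the data and of the solutions only by amounts that vanish as $\eps\to0$, so that both the convergence of the data and the $\gtrsim 1$ lower bound on the separation survive; this is where I expect most of the care to go, since the lower bound in Theorem~\ref{THM:main}(i) is typically delicate. For part (ii), the corresponding point is that the small-time expansion and its remainder estimates are insensitive to replacing the half-wave propagator by the identity in the resonant (Szeg\H o) model, which is immediate once one notes $e^{-it|D|}$ is an $H^s$-isometry and that on the time scale $t_\eps\to 0$ the nonresonant phases contribute $O(t_\eps)$ corrections absorbed into the remainder. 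No new analytical input beyond what is already developed for Theorem~\ref{THM:main} should be required.
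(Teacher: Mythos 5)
Your overall route is the paper's: both parts are extracted from the constructions behind Theorem \ref{THM:main}, part (i) from the Szeg\H{o} solutions underlying the proof of Theorem \ref{THM:main}(i) (this is Corollary \ref{COR:UC.SZ}), and part (ii) by rerunning the Picard-iteration argument of Proposition \ref{PROP:inflation} with $e^{-it|D|}$ replaced by the identity (Remark \ref{REM:inflation SZ}). Your part (ii) is essentially complete as stated: the datum $\phi$ of \eqref{phi} already satisfies $\supp\ft\phi\subset[0,\infty)$ since $A\ll N$, so no modification of the data is needed; the extra projector $\Pi_+$ in the Szeg\H{o} nonlinearity is bounded on $M_A$ and does not affect the lower bound of Lemma \ref{LEM:U3}, which is extracted at output frequencies $\xi\in[0,\frac A8)$; and the resonance computation becomes trivial because the Szeg\H{o} flow has no dispersive phase at all.

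Part (i) has two soft spots. First, the solutions to recycle are not modulated bumps $e^{iN_\eps x}\phi$: for generic data there is no ``explicit Szeg\H{o} dynamics'' from which to read off the separation. The mechanism is the explicit family of traveling waves $V(t,x)=\alpha e^{-i\omega t}/(x-ct+ip)$ with amplitude-dependent speed $c=\alpha^2/(2p)$; two such waves with $\alpha_2-\alpha_1\sim\eps|\log\eps|^{-1/2}$ start close but decorrelate, since the cross term in $\|V_1-V_2\|_{\dot H^s}^2$ decays like $(2p-i(c_2-c_1)t)^{-(2s+1)}$. These data lie in $L^2_+$ automatically (Lemma \ref{Fourier}), so none of the truncation bookkeeping you anticipate actually arises. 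Second, the endpoint $s=0$ cannot be obtained by recycling Theorem \ref{THM:main}(ii): the profiles $Q_\beta$ there are traveling waves of the focusing NHW, not of the Szeg\H{o} equation. Since the scaling $\ld=\eps^{-1/s}$ used for $s\in(0,\frac12)$ degenerates at $s=0$, the $s=0$ case requires a separate direct choice of Szeg\H{o} traveling waves (in the paper, $p=\eps$ and $\al_j\sim\eps^{1/2}$); some such substitute is needed in your argument as well.
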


Along with the above mentioned approximation of NHW by the Szeg\"o equation,
Proposition \ref{final rem:SZ} (i) (for $0<s<\frac 12$)
is the key ingredient in proving Theorem \ref{THM:main} (i).
On the other hand, Proposition \ref{final rem:SZ} (ii) follows 
by slightly modifying the argument used to prove Theorem \ref{THM:main} (iv).

\subsection{Norm inflation in negative Sobolev spaces for fractional NLS}

Now we turn our attention to the cubic fractional NLS \eqref{FNLS0}
with a general value of $\beta>0$.
The equation \eqref{FNLS0}
possesses the scaling symmetry 
$u\mapsto u_\ld(t,x):=\ld^{\frac{\beta}{2}}u(\ld x, \ld^\beta t)$.
Associated to this symmetry, one 
defines the scaling critical regularity $s_{\rm crit}$
to be the index $s$
for which the $\dot H^s$-norm
of $u(0)$ is invariant under this scaling.
A simple calculation yields that 
$$s_{\rm crit}=\frac{1-\b}{2}\,.$$

The well-posedness theory of the cubic fractional NLS \eqref{FNLS0} with $1<\beta<2$ was studied 
in \cite{CHKL, DET}.
In particular, it was shown that \eqref{FNLS0} 
is locally well-posed
in $H^s$ for $s\geq \frac{2-\beta}{4}$
both on $\R$ and $\T$.
Moreover, \cite{CHKL}
proved the failure of local uniform continuity of the solution map 
of \eqref{FNLS0} on $\R$ in $H^s(\R)$, $\frac{2-3\beta}{4(\beta+1)}<s<\frac{2-\beta}{4}$,
in the case $1<\beta<2$. 

Proceeding as in Theorem \ref{THM:main} (iv),
we prove the following norm inflation property (a strong form of ill-posedness) for 
fractional NLS with a general $\beta>0$. 

\begin{theorem}[Norm inflation property for fractional NLS]\label{fracNLS}

The cubic fractional {\rm NLS} \eqref{FNLS0}
has the norm inflation property in $H^s(\R)$ {\rm(}in the sense of
Theorem \ref{THM:main} {\rm (iv)}{\rm)} 
in the following cases {\rm(}see the shaded region in Figure \ref{Fig1}{\rm)}:
\begin{itemize}
\item $0<\beta< 1$: $s< 0$,
\item $1\leq \b <2$: $s<s_{\rm crit}$,
\item $\b= 2$: $s\leq s_{\rm crit}$,
\item  $\b>2$: $s<\frac{1-2\b}{6}$.
\end{itemize}
\end{theorem}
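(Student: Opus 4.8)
The plan is to adapt the Fourier-amplitude / Picard-iteration construction used for Theorem \ref{THM:main} (iv) to the general dispersion $|D|^\beta$. Passing to the Duhamel formulation, write $u(t) = e^{-it|D|^\beta}u_0 - i\mu\int_0^t e^{-i(t-t')|D|^\beta}\bigl(|u|^2 u\bigr)(t')\,dt'$, and look for initial data of the ``concentrated at a single high frequency'' form, i.e. $\widehat{u_0}$ supported near a frequency $N\gg 1$ and of height tuned so that $\|u_0\|_{H^s}\approx R$ is small while the $L^2$-mass (or the relevant sup-norm) is large. Concretely I would take $u_0$ whose Fourier transform is (a smoothed version of) $A\,\ind_{|\xi - N|\le 1}$ with amplitude $A = A(N,R)$ chosen so that $\|u_0\|_{H^s}\sim A\,N^{s}\sim R$. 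One then expands the solution as a power series $u = \sum_{k\ge 0} u_k$ in the nonlinearity, where $u_0$ is the linear evolution and $u_{k}$ collects the $k$-th Picard iterate; the leading corrector is the first nonlinear term
\begin{equation*}
u_1(t) = -i\mu\int_0^t e^{-i(t-t')|D|^\beta}\bigl(|e^{-it'|D|^\beta}u_0|^2\, e^{-it'|D|^\beta}u_0\bigr)\,dt'.
\end{equation*}

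The heart of the matter is a stationary-phase / resonance computation for $u_1$. In Fourier variables the output frequency $\xi = \xi_1 - \xi_2 + \xi_3$ with each $\xi_j$ near $N$ can land near $N$ as well, and the accumulated phase is $t'\,\varphi(\xi_1,\xi_2,\xi_3)$ with $\varphi = -|\xi_1|^\beta + |\xi_2|^\beta - |\xi_3|^\beta + |\xi_1-\xi_2+\xi_3|^\beta$. On the support $|\xi_j - N|\lesssim 1$ one has $|\varphi|\lesssim N^{\beta-2}$ (two derivatives of $|\xi|^\beta$ at scale $N$), so for times $t\lesssim N^{2-\beta}$ the phase is essentially frozen and the time integral contributes a factor $\sim t$, giving $\widehat{u_1}(t,\xi)\sim t\, A^3$ on a unit-length frequency interval near $N$. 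Hence $\|u_1(t)\|_{H^s}\sim t\,A^3 N^{s}$, while the smallness of the data forces $A N^{s}\sim R$, i.e. $A \sim R N^{-s}$ and $A^3 N^{s}\sim R^3 N^{-2s}$. Choosing the observation time $t_N\sim N^{2-\beta}$ (or any time up to that threshold) we get
\begin{equation*}
\|u_1(t_N)\|_{H^s}\gtrsim t_N\, R^3 N^{-2s}\sim R^3\, N^{2-\beta-2s},
\end{equation*}
which blows up as $N\to\infty$ precisely when $2-\beta-2s>0$, i.e. $s < \frac{2-\beta}{2} = 1-\tfrac\beta2$; recalling $s_{\rm crit} = \frac{1-\beta}{2}$, this is $s < s_{\rm crit} + \tfrac12$. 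For $\beta\le 2$ this condition, combined with the requirement that the observation time also satisfy $t_N<\eps\to 0$ (which needs $2-\beta>0$ for $\beta<2$, and is handled separately at $\beta=2$), yields the stated ranges $s<0$ for $0<\beta<1$, $s<s_{\rm crit}$ for $1\le\beta<2$, and $s\le s_{\rm crit}$ for $\beta=2$ — the endpoint $s=s_{\rm crit}$ at $\beta=2$ being reached because there the threshold time is $O(1)$ and one gains a logarithm or works at the critical amplitude directly. For $\beta>2$ the time integral can no longer be run up to $N^{2-\beta}$ (that is shorter than any fixed time is long), so one instead runs it up to a fixed small time, and the correct bookkeeping of the bilinear interactions across a frequency band of width $\sim N$ (rather than width $1$) changes the exponent; redoing the count with a band gives gain $N^{(1-2\beta)/6}$-type scaling, which is what produces the surprising condition $s<\frac{1-2\beta}{6}$ above scaling.

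The remaining work, which I expect to be routine given the machinery already set up for Theorem \ref{THM:main} (iv), is to control the tail $\sum_{k\ge 2}\|u_k(t)\|_{H^s}$ and show it is negligible compared to $\|u_1(t_N)\|_{H^s}$, so that $\|u(t_N)\|_{H^s}\gtrsim \|u_1(t_N)\|_{H^s}\gg 1$; this is a standard multilinear estimate in a suitable $X^{s,b}$-type or iterated-Duhamel space, using that each extra factor of the nonlinearity costs a power of $t$ times $A^2$ and that on the relevant time scale these are small. One also needs to verify $u^\eps \in C(\R;H^\infty)$, which follows because the data is Schwartz (a smooth bump, not literally an indicator) and the equation is globally well-posed for smooth data in each of the regimes considered (or at least locally, past the small time $t_\eps$). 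The main obstacle is really the $\beta>2$ case: one must correctly identify the optimal frequency-band width and time horizon balancing the growth of $u_1$ against the loss in smallness of the data and against the size of $u_2$, and this is where the unusual exponent $\frac{1-2\beta}{6}$ — and the fact that it lies \emph{above} $s_{\rm crit}$ — comes from; I would treat that case with a dedicated, more careful stationary-phase estimate rather than trying to force it into the $\beta\le2$ template.
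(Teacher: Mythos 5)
Your construction has a fatal flaw at its core: you place the data on a single bump near frequency $N$, so that the cubic output frequency $\xi=\xi_1-\xi_2+\xi_3$ lands again near $N$. The weight $\langle\xi\rangle^{s}$ then penalizes input and output identically and the negative regularity buys you nothing. Concretely, with $\widehat{u_0}\sim A\,\pmb{1}_{|\xi-N|\le 1}$ you need $\|u_0\|_{H^s}\sim AN^{s}\ll 1$ and $\|u_1(t)\|_{H^s}\sim tA^{3}N^{s}\gg 1$, hence $tA^{2}\gg (AN^{s})^{-1}\gg 1$; but by your own accounting each further Picard iterate costs a factor $\sim tA^{2}$, so controlling the tail forces $tA^{2}\ll 1$. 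These two requirements are incompatible for every choice of $t,A,N$, so your configuration cannot produce norm inflation by this method. The paper's proof instead exploits a \emph{high-to-low} cascade: $\widehat{\phi}=R(\pmb{1}_{N+I_A}+\pmb{1}_{2N+I_A})$ is supported on two intervals of width $A$ (a free parameter, not width $1$) centered at the \emph{separated} frequencies $N$ and $2N$, so that $\xi_1-\xi_2+\xi_3$ with $\xi_1,\xi_3\approx N$, $\xi_2\approx 2N$ lands in $[0,\tfrac A8)$, where $\langle\xi\rangle^{s}$ costs nothing. This gives $\|U_3[\phi](T)\|_{H^s}\gtrsim TR^{3}A^{2}g(A)$ against $\|\phi\|_{H^s}\sim RA^{1/2}N^{s}$, and it is precisely this mismatch of Sobolev weights that makes inflation possible.

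Your phase estimate is likewise computed in the wrong configuration. For the high-to-low interaction the phase is $|\xi|^{\beta}-|\xi_1|^{\beta}+|\xi_2|^{\beta}-|\xi_3|^{\beta}\approx(2^{\beta}-2)N^{\beta}$, of size $N^{\beta}$ (it vanishes only for $\beta=1$, the half-wave resonance), not $O(N^{\beta-2})$; hence the time integral contributes a factor $\sim t$ only under the constraint $T\ll N^{-\beta}$. All four thresholds in the theorem, including the anomalous $s<\frac{1-2\beta}{6}$ for $\beta>2$, come from optimizing $(T,R,A,N)$ subject to $T\ll N^{-\beta}$, $RA^{1/2}N^{s}\ll 1$, $TR^{2}A^{2}\ll 1$, and $TR^{3}A^{2}g(A)\gg 1$ — there is no separate stationary-phase argument for $\beta>2$. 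Your computed condition $s<1-\frac{\beta}{2}$ matches none of the stated ranges (for $0<\beta<1$ it would even permit some $s>0$), and the passage from it to the theorem's region is asserted rather than derived; the $\beta>2$ exponent in particular is left entirely unproved.
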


\begin{figure}[h]  

\includegraphics{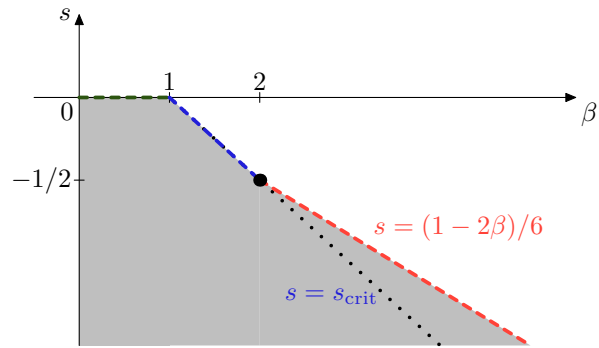}

\caption{Region of norm inflation for fractional NLS.}

\label{Fig1}

\end{figure}

It is generally conjectured that a PDE is ill-posed in $H^s$
for $s<s_{\rm crit}$. 
For cubic NLS ($\beta=2$) on $\R$ and $\T$,
Christ, Colliander, and Tao \cite{CCTarxiv} and Kishimoto \cite{Kishimoto1}
proved, indeed,
norm inflation in $H^s(\R)$, $s\leq s_{\rm crit}=-\frac 12$.
In Theorem \ref{fracNLS}, we prove norm inflation 
in $H^s(\R)$, $s<s_{\rm crit}$, for all fractional NLS with $1\leq \beta\leq 2$.
Surprisingly, in the case $\beta>2$, 
we obtain norm inflation for regularities $s<\frac{1-2\beta}{6}$,
where we note that $\frac{1-2\beta}{6}>s_{\rm crit}$.
To the best of the authors' knowledge, 
this is the first result of norm inflation
{\it above} the scaling critical regularity for NLS-type equations
with a gauge-invariant nonlinearity. 

In the non gauge-invariant case, previously 
Iwabuchi and Ogawa \cite{Iwabuchi_Ogawa} and 
Iwabuchi and Uriya \cite{Iwabuchi_Uriya}
obtained norm inflation above the scaling critical regularity for 
NLS with nonlinearities $u^2$ on $\R$, and $|u|^2$ on $\R^n$, $n=1,2,3$, respectively. 
Our norm inflation results
are inspired by \cite{Iwabuchi_Ogawa} and 
by a further development by Kishimoto \cite{Kishimoto1}.

In the gauge-invariant case, there are results on other types of 
ill-posedness above the scaling critical regularity.
We explain below why Theorem \ref{fracNLS} with $\beta>2$ is of a different nature
compared to these previous results.
Previously, 
Christ, Colliander, and Tao \cite{CCTarxiv2} 
and Molinet \cite{Molinet} showed the failure of continuity of the solution map in $H^s(\T)$, $s<0$,
for the cubic NLS on $\T$ ($\b=2$ and $s_{\rm crit}=-\frac 12$).
Furthermore, Guo and Oh \cite{Guo_Oh} 
proved non-existence of solutions of the cubic NLS on $\T$
if the initial data is in $H^s(\T)\setminus L^2(\T)$, where $s\in (-\frac 18, 0)$.
The latter also holds for the cubic biharmonic NLS ($\beta=4$) on $\T$
(with $s_{\rm crit}=-\frac 32$).
We remark that all these ill-posedness results no longer apply
if we remove a certain resonant term from the nonlinearity. 
More precisely, set $\M(u):=\frac{1}{2\pi}\int_\T |u|^2dx$ and consider the Wick ordered cubic fractional NLS on $\T$:
\[i\partial_t u-|D|^\beta u =\left(|u|^2-\M(u)\right)u.\]
Then the above-mentioned ill-posedness results in \cite{CCTarxiv2, Molinet, Guo_Oh}
do not apply to the Wick ordered 
cubic NLS, nor to the Wick ordered cubic biharmonic NLS.

Let us now turn to Theorem \ref{fracNLS}.
First, we make the observation that the proof of Theorem \ref{fracNLS}
for \eqref{FNLS0} on $\R$ can be carried over to $\T$ almost literally. 
One can then use the gauge transformation 
to map a solution $u$ of the cubic fractional NLS \eqref{FNLS0} on $\T$ 
with initial data $u_0\in L^2(\T)$,
into a solution of the Wick ordered cubic fractional NLS on $\T$:
\[u\mapsto \mathcal G(u)(t) := e^{-2it \M (u_0)} u(t).\]
Note that $\mathcal G$ preserves any $H^s$-norm.
Since we only work with smooth initial data,
this immediately allows us to deduce that
the norm inflation in Theorem \ref{fracNLS}
also holds for the Wick ordered cubic fractional NLS on $\T$.
In particular, this shows that simply removing $\M(u_0) u$ from the nonlinearity
of the cubic fractional NLS on $\T$ is not sufficient to make the solution map continuous. 
Instead, in our case, the discontinuity
of the solution map 
is more critical and is due to a genuinely nonlinear effect 
(the ``high-to-low frequency cascade" in the nonlinearity; see Subsection \ref{subsec:Rem} below). 
To the best of the authors' knowledge, this is a new phenomenon 
for regularities larger than the scaling critical regularity, in the case of gauge-invariant nonlinearities.

\subsection{Comments on the proofs and further remarks}\label{subsec:Rem}
\textcolor{white}a

In a recent work,
Georgiev, Tzvetkov, and Visciglia \cite{GTV} 
proved the failure of local uniform continuity of the solution map
of NHW \eqref{HW} on the torus $\T$
in $H^s(\T)$, $s\in (\frac 14, \frac 12)$. 
In proving Theorem \ref{THM:main} (i),
we follow the strategy of \cite{GTV}.
Namely, given $0<\eps\ll1$, 
we consider two solutions $u_1^\eps, u_2^\eps$ of NHW
that can be respectively approximated by explicit traveling waves $v_1^\eps, v_2^\eps$
of the Szeg\H{o} equation \eqref{SZ}.
The two traveling waves $v_1^\eps, v_2^\eps$
are chosen in such a way that they are very close to each other
at time $t=0$, but after a long time their distance increases 
{\it relatively} to their initial distance (remaining still small). 
Then, the solutions $u_1^\eps, u_2^\eps$ of NHW 
inherit the same properties regarding their distance. 
Finally, exploiting the scaling symmetry of NHW on $\R$,
the distance between the solutions $u_1^\eps, u_2^\eps$  of NHW
can be made $\ges 1$ at a
very small time.
While the result in \cite{GTV} in the periodic case holds 
in $H^s(\T)$, $s\in (\frac 14, \frac 12)$, 
we were able to prove failure of the local uniform continuity of the solution map
of NHW on $\R$
in $H^s(\R)$ for $s\in (0, \frac 12)$. 
We enlarged the range of regularities
by using a finer approximation of NHW by the Szeg\H{o} equation,
together with the scaling symmetry of NHW on $\R$ (not available on $\T$).
See Remark \ref{REM:GTV} below for details.

For the proof of Theorem \ref{THM:main} (ii),
we work directly with two traveling waves of the focusing NHW on $\R$,
rather than working with solutions that can be approximated 
by explicit solutions of the Szeg\H{o} equation. 
Note that this argument is not applicable to the defocusing NHW, 
since there are no traveling waves in that case. 
We choose the two speeds of the traveling waves of focusing NHW 
in such a way that: 
(i) at time $t=0$
the traveling waves are very close to each other, 
and
(ii) at a later time
they are spatially located far away from each other
and, therefore, their distance becomes $\ges 1$.
This type of argument 
was first used 
in the work of Birnir, Kenig, Ponce, Svanstedt, and Vega \cite{BKPSV}. 
It was later used by Kenig, Ponce, and Vega \cite{KPV01}
to prove the failure of local uniform continuity
of the solution map for the focusing cubic NLS on $\R$.
See also \cite{Biagioni_Linares, Herr_Lenzmann}.

The failure of $C^3$-smoothness of the solution map in Theorem \ref{THM:main} (iii)
follows from the unboundedness of the trilinear operator $\nabla^3 \Phi_\mu\big|_{u_0=0}$. 
This type of approach was first introduced by Bourgain in \cite{Bourgain97}, 
where he proves the failure of $C^3$-smoothness
of the solution map for KdV and mKdV on both $\T$ and $\R$
below certain threshold regularity. 
See also \cite{Tzvetkov_CRM}. 

The proof of Theorem \ref{THM:main} (iv)
is based on analyzing each term appearing in the Picard iteration scheme for NHW \eqref{HW}.
In particular, we show that the 
cubic term $U_3$
(see equation \eqref{Uk})
dominates all the other terms and is unbounded in $H^s(\R)$.  
To achieve this goal we exploit the ``high-to-low energy cascade"
in the nonlinearity. 
This idea appeared first in the work of Bejenaru and Tao \cite{BT}
as an abstract and general argument for proving ill-posedness. 
Recently,  
Iwabuchi and Ogawa \cite{Iwabuchi_Ogawa}
developed this idea further,
making it more easily applicable to a wider class of equations. 
In this paper,
we follow closely an argument of Kishimoto \cite{Kishimoto1}. 
We also exploit the abundance of resonances of NHW. 
In Theorem \ref{fracNLS},
we prove an analogous result for the cubic fractional NLS with general $\beta>0$. 
More care is needed in this case since 
\eqref{FNLS0} with $\beta\neq 1$
has less resonances than NHW.

\begin{remark}
{\rm 
In \cite{Ohgeneric}, Oh further extended the approach of 
Iwabuchi and Ogawa \cite{Iwabuchi_Ogawa}
to prove a norm inflation phenomenon 
based at general initial conditions
(not only the zero initial condition) for the
cubic NLS on $\R^d$ and $\T^d$. 
It was remarked in \cite{Ohgeneric}
that a similar argument
can be used to extend our results from Theorem \ref{THM:main} (iv)
and Theorem \ref{fracNLS} to
norm inflation 
based at general initial conditions.
See also \cite{Xia} for a previous 
result on generic ill-posedness. 

}
\end{remark}

\begin{remark}
{\rm
Theorem \ref{THM:main} (iv) and Theorem \ref{fracNLS}
also hold on $\T$ with essentially the same proof. 
In a recent work, Oh and Wang \cite{Oh} obtained a similar result
for the fractional NLS on $\T$.
On the one hand, their result does not include 
regularities above $s_{\rm crit}$.
On the other hand, it includes
positive regularities $0<s<s_{\rm crit}$ (for $0<\b<1$), 
that are not considered in this paper.
The strategy of \cite{Oh} is different from ours.
It consists in 
exploiting the 
``high-to-low energy cascade" in the dynamics of dispersionless NLS
$i\pa_t u=|u|^2u$,
and then 
in approximating dispersionless NLS by the 
small dispersion fractional NLS 
$i\pa_t u-\nu |D|^\b u=|u|^2u$, $0<\nu\ll1$. 
This strategy was first used by Christ, Colliander, and Tao
\cite{CCT_AJM, CCTarxiv} in the context of NLS on $\R^d$.
See also \cite{BGT, BTI}.
In \cite{Oh}, this was adapted to the periodic setting
by using a modified scaling argument to relate small dispersion fractional NLS to fractional NLS. 
See also \cite{Kishimoto1, Carles_Kappeler} for recent results 
on norm inflation for NLS in the periodic setting. 
}
\end{remark}

Finally, we refer to \cite{BGT1, Koch_Tzvetkov, Tzvetkov_GAKUTO, Lindblad, 
Lebeau, Lebeau1, Ibrahim, Alazard_Carles, CDS1, CDS2} for more ill-posedness results
for nonlinear PDEs.

{\it Organization of the paper.} 
Sections 2-5 are dedicated to the proof of Theorem~\ref{THM:main}. 
More precisely, the four parts of Theorem~\ref{THM:main} follow from
Propositions~\ref{PROP:UC}, \ref{PROP:UCL2}, \ref{PROP:C3}, and \ref{PROP:inflation},
respectively.
We also note that Proposition \ref{final rem:SZ} on ill-posedness of the cubic Szeg\H{o} equation
follows from Corollary \ref{COR:UC.SZ}
and Remarks \ref{REM:C3 SZ} and \ref{REM:inflation SZ}.
Finally, in Section 6 we prove Theorem \ref{fracNLS}.


\section{Failure of local uniform continuity of the solution map of NHW in $H^s(\R)$, $s\in (0, \frac 12)$}

In this section, we prove Theorem \ref{THM:main} (i), namely the failure of local uniform continuity of the solution map of \eqref{HW} with $\mu=\pm 1$, with respect to the topology of $H^s(\R)$, $s\in (0, \frac 12)$. 

As it was recalled in the introduction, 
solutions of equation \eqref{HW} with small, well-prepared data 
can be approximated for a long time by solutions of the completely integrable 
cubic Szeg\H{o} equation \eqref{SZ}.
See Proposition \ref{PROP:approx} below.
Thus, we will use explicit traveling waves of this integrable 
model to first show the failure of local uniform continuity 
 for the Szeg\H{o} equation. 
 Next, we will use the above mentioned approximation
 to deduce the failure of local uniform continuity for \eqref{HW}.

We start with an elementary lemma that we use repeatedly in this paper. 
We denote by $\mathcal F$ the Fourier transform:
\[\mathcal  F f(\xi)=\hat{f} (\xi):=\int_{\R}e^{-ix\xi}f(x)dx \qquad \text{ for all } \xi\in\R.\]
For $s\in\R$, we define the homogeneous $\dot H^s$-norm by
\[\|f\|_{\dot H^s(\R)}:=\frac{1}{\sqrt{2\pi}}\left(\int_\R |\xi|^{2s}|\ft f(\xi)|^2d\xi\right)^{\frac 12},\]
while the non homogeneous $H^s$-norm is defined by
\[\|f\|_{H^s(\R)}:=\frac{1}{\sqrt{2\pi}}\left(\int_\R \jb{\xi}^{2s}|\ft f(\xi)|^2d\xi\right)^{\frac 12},\]
where $\jb{\xi}=\sqrt{1+\xi^2}$.
\begin{lemma}\label{Fourier}
Let $c\in\R$ and $p>0$. 
Then the following hold:

\noi
{\rm (i).} $\mathcal{F}\left(\frac{1}{x-ct+ip}\right) (\xi) = -2\pi i e^{-ict\xi} e^{-p\xi} \,   \pmb{1}_{\xi>0}$
for all $\xi\in\R$.

\noi
{\rm (ii).} For any $s>-\frac 12$, we have $\left\|\frac{1}{x-ct+ip}\right\|_{\dot{H}^s}=\frac{\sqrt{2\pi \Gamma(2s+1)}}{(2p)^{s+\frac 12}}$, where $\Gamma$ denotes the gamma function.

\noi
{\rm (iii).} $\frac{1}{x-ct+ip}\notin H^s(\R)$ for any $s\leq -\frac 12$.
\end{lemma}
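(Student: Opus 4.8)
The plan is to establish (i) by a standard contour integration / residue computation, then derive (ii) from (i) via Plancherel and a Gamma-integral, and finally obtain (iii) by observing that the Fourier transform from (i) fails the $H^s$-integrability condition near $\xi=0$ when $s\leq-\tfrac12$.

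For part (i): By translation and the modulation identity $\mathcal F(e^{ict\,\cdot}g)(\xi)=\hat g(\xi-ct)$ — or simply by absorbing $ct$ into a shift of the contour — it suffices to compute $\mathcal F\bigl(\tfrac{1}{x+ip}\bigr)(\xi)$ for $p>0$ and then replace $x$ by $x-ct$, which multiplies the transform by $e^{-ict\xi}$. I would compute $\int_\R e^{-ix\xi}\tfrac{1}{x+ip}\,dx$ by closing the contour in the lower half-plane when $\xi>0$ (so that $|e^{-iz\xi}|=e^{(\Im z)\xi}$ decays) and in the upper half-plane when $\xi<0$. The only pole is at $x=-ip$, which lies in the lower half-plane; hence for $\xi<0$ the integral is $0$, and for $\xi>0$ the residue theorem (with a $-2\pi i$ from clockwise orientation) gives $-2\pi i\, e^{-i(-ip)\xi}=-2\pi i\, e^{-p\xi}$. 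The value at $\xi=0$ is irrelevant (measure zero). Reinstating the shift gives the claimed formula $-2\pi i\, e^{-ict\xi}e^{-p\xi}\,\pmb 1_{\xi>0}$. One should note the integral is only conditionally convergent, so strictly I would interpret it as a tempered-distribution Fourier transform or insert a convergence factor $e^{-\delta|x|}$ and pass to the limit; this is the only mildly delicate point, but entirely routine.

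For part (ii): Using the definition of $\|\cdot\|_{\dot H^s}$ and part (i),
\[
\Bigl\|\tfrac{1}{x-ct+ip}\Bigr\|_{\dot H^s}^2
=\frac{1}{2\pi}\int_\R|\xi|^{2s}\,\bigl|-2\pi i\,e^{-ict\xi}e^{-p\xi}\bigr|^2\pmb 1_{\xi>0}\,d\xi
=2\pi\int_0^\infty \xi^{2s}e^{-2p\xi}\,d\xi.
\]
Here convergence at $\xi=0$ requires exactly $2s>-1$, i.e.\ $s>-\tfrac12$, matching the hypothesis. The substitution $\eta=2p\xi$ turns the integral into $(2p)^{-(2s+1)}\int_0^\infty\eta^{2s}e^{-\eta}\,d\eta=(2p)^{-(2s+1)}\Gamma(2s+1)$, so the norm equals $\sqrt{2\pi\,\Gamma(2s+1)}\,(2p)^{-(s+\frac12)}$, as claimed. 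For part (iii): by part (i) the Fourier transform has modulus $2\pi e^{-p\xi}$ on $\xi>0$, which is bounded below by a positive constant on $(0,1)$; hence $\int_\R\langle\xi\rangle^{2s}|\hat f(\xi)|^2\,d\xi\gtrsim\int_0^1|\xi|^{2s}\cdot 1\,d\xi\cdot\inf\langle\xi\rangle^{2s}$ — more cleanly, $\langle\xi\rangle^{2s}\gtrsim 1$ on $(0,1)$, and $\int_0^1 e^{-2p\xi}\,d\xi$ is a positive constant while the real obstruction is that near $\xi=0$ the $\dot H^s$ weight $|\xi|^{2s}$ is non-integrable; combining, $\|f\|_{H^s}^2\gtrsim\int_0^1 e^{-2p\xi}\,d\xi=+\infty$ is not quite right — rather one notes $H^s\hookrightarrow \dot H^s$ fails, so I argue directly: $\|f\|_{H^s}^2=\tfrac1{2\pi}\int_0^\infty\langle\xi\rangle^{2s}(2\pi)^2e^{-2p\xi}\,d\xi$, and since $\langle\xi\rangle^{2s}\geq (1+\xi^2)^{s}\to\infty$ is false for $s<0$; instead for $s\le-\tfrac12$ one uses that on $\xi\in(0,1)$ we have $\langle\xi\rangle^{2s}\ge 2^{s}$ a positive constant and $e^{-2p\xi}\ge e^{-2p}$, so the integrand is bounded below by a positive constant there, giving a \emph{finite} contribution — so in fact the non-homogeneous $H^s$ norm with $s\le -1/2$ is finite?! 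This is the crux to get right.

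The main obstacle, then, is part (iii): one must be careful that $f(x)=\tfrac{1}{x-ct+ip}$ is a bounded $L^2$ function, so its non-homogeneous $H^s$ norm is \emph{automatically finite for all $s\le 0$}; the statement ``$f\notin H^s$ for $s\le-\tfrac12$'' must therefore refer to the homogeneous space $\dot H^s$ (the formula in (ii) indeed diverges as $s\downarrow-\tfrac12$ since $\Gamma(2s+1)$ has a pole at $s=-\tfrac12$, and $\Gamma(2s+1)<0$ for $s\in(-\tfrac12,0)$... no: $\Gamma$ is positive on $(0,\infty)$ and $2s+1\in(0,1)$ there). I would resolve this by reading (iii) as $\tfrac{1}{x-ct+ip}\notin\dot H^s(\R)$ for $s\le-\tfrac12$, which follows immediately: by part (i), $\int_\R|\xi|^{2s}|\hat f(\xi)|^2\,d\xi=(2\pi)^2\int_0^\infty|\xi|^{2s}e^{-2p\xi}\,d\xi$, and for $s\le-\tfrac12$ we have $2s\le-1$, so the integrand behaves like $\xi^{2s}$ near $\xi=0$, which is non-integrable; hence the $\dot H^s$ norm is infinite. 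I expect the authors intend exactly this homogeneous reading, and the proof of (iii) is then a one-line divergence-at-the-origin argument parallel to the convergence analysis in (ii).
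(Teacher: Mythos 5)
Your proof is correct and follows the same route as the paper: a residue computation for (i), and Plancherel together with the substitution $\eta=2p\xi$ (extended holomorphically, or done directly) for (ii). Your observation about (iii) is also well taken: since $\frac{1}{x-ct+ip}$ lies in $L^2(\R)$, it belongs to the non-homogeneous $H^s(\R)$ for every $s\le 0$, so the assertion must be read in the homogeneous space $\dot H^s$; this is consistent with the paper's own one-line justification, which invokes the divergence of $\int_0^\infty \eta^{2s}e^{-\eta}\,d\eta$ at the origin --- precisely the homogeneous computation with weight $|\xi|^{2s}$. The only criticism is presentational: the sequence of false starts and self-corrections in your discussion of (iii) should be deleted, keeping only the final, correct divergence-at-the-origin argument.
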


\begin{proof}
Part (i) follows directly from the residue theorem, while part (ii) follows
from (i)
and by the change of variables $\eta=2p\xi$:
\begin{align*}
\left\|\frac{1}{x-ct+ip}\right\|_{\dot{H}^s}^2&=\frac{1}{2\pi}\int |\xi|^{2s}\left|\mathcal{F}\left(\frac{1}{x-ct+ip}\right) (\xi)\right|^2d\xi
=2\pi\int_0^\infty \xi^{2s}e^{-2p\xi}d\xi\\
&=\frac{2\pi}{(2p)^{2s+1}}\int_0^\infty \eta^{2s}e^{-\eta}d\eta
=\frac{2\pi \Gamma (2s+1)}{(2p)^{2s+1}}. 
\end{align*}
For part (iii), it suffices to notice that $\int_0^\infty \eta^{2s}e^{-\eta}d\eta=\infty$ for any $s\leq -\frac 12$.
\end{proof}

Next we recall the following classification of traveling waves of the Szeg\H{o} equation on $\R$ from \cite{Poc1}. These are special solutions of the Szeg\H{o} equation of the form $V(t,x)=e^{-i\omega t}V_0(x-ct)$.

\begin{proposition}[\cite{Poc1}, Traveling waves of the Szeg\H{o} equation on $\R$]
A function $u\in C(\R; H^{\frac 12}_+)$ is a traveling wave solution of the Szeg\H{o} equation on $\R$ if and only if there exist
$\phi, a\in \R$ and $\alpha, p>0$ such that
\begin{equation*}
V(t,x)=\frac{\alpha e^{i\phi}e^{-i\omega t}}{x-ct+a+ip} \quad \quad  \text{ for all } t\in\R,
\end{equation*}
where
\begin{equation}\label{c_omega}
c:=\frac{\alpha^2}{2p} \quad \text{ and } \quad \omega:=\frac{\alpha^2}{4p^2}
\end{equation}
\end{proposition}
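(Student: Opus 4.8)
The plan is to reduce the characterization to a stationary ``profile equation'' for $V_0$ and then analyze that equation. Substituting the ansatz $V(t,x)=e^{-i\omega t}V_0(x-ct)$ into the Szeg\H{o} equation \eqref{SZ} and using that $\Pi_+$ is a Fourier multiplier, hence commutes with the spatial translation $f\mapsto f(\cdot-ct)$ and with the phase factor, one sees that $e^{-i\omega t}V_0(x-ct)$ solves \eqref{SZ} exactly when $V_0\in H^{1/2}_+$ satisfies
\begin{equation}\label{EQ:profileTW}
\omega V_0-icV_0'=\Pi_+\bigl(|V_0|^2V_0\bigr)\qquad\text{on }\R,
\end{equation}
with $V_0'=\tfrac{d}{dx}V_0$. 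So it suffices to classify the nonzero solutions of \eqref{EQ:profileTW} with $\omega,c\in\R$. Since \eqref{EQ:profileTW} is invariant under $V_0\mapsto V_0(\cdot+a)$ and $V_0\mapsto e^{i\phi}V_0$, these symmetries account for the parameters $a$ and $\phi$, so one may work modulo them.

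For the ``if'' direction I would simply check \eqref{EQ:profileTW} by hand. Taking $V_0=\alpha e^{i\phi}(y+ip)^{-1}$ with $y:=x+a$, partial fractions give
\[
|V_0|^2V_0=\frac{\alpha^3e^{i\phi}}{(y+ip)^2(y-ip)}
=\alpha^3e^{i\phi}\Bigl(\frac1{4p^2}\,\frac1{y+ip}-\frac1{2ip}\,\frac1{(y+ip)^2}-\frac1{4p^2}\,\frac1{y-ip}\Bigr).
\]
By Lemma~\ref{Fourier}~(i) and its complex conjugate, $\tfrac1{y+ip}$ and $\tfrac1{(y+ip)^2}$ have Fourier transform supported in $(0,\infty)$ while $\tfrac1{y-ip}$ has Fourier transform supported in $(-\infty,0)$; hence $\Pi_+$ kills the last term, and comparing
\[
\Pi_+\bigl(|V_0|^2V_0\bigr)=\frac{\alpha^3e^{i\phi}}{4p^2}\,\frac1{y+ip}-\frac{\alpha^3e^{i\phi}}{2ip}\,\frac1{(y+ip)^2}
\]
with $\omega V_0-icV_0'=\omega\alpha e^{i\phi}(y+ip)^{-1}+ic\alpha e^{i\phi}(y+ip)^{-2}$ forces exactly $\omega=\alpha^2/(4p^2)$ and $c=\alpha^2/(2p)$. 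Since $V_0\in H^s_+$ for every $s$ by Lemma~\ref{Fourier}~(i), the function $e^{-i\omega t}V_0(x-ct)$ lies in $C(\R;H^{1/2}_+)$ and is a traveling wave.

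For the ``only if'' direction, let $V_0\in H^{1/2}_+\setminus\{0\}$ solve \eqref{EQ:profileTW}. I would first bootstrap regularity: $V_0\in L^q(\R)$ for every $q<\infty$ by Sobolev embedding, hence $|V_0|^2V_0\in L^2$, hence (once $c\neq0$ is known) $V_0'\in L^2$ by \eqref{EQ:profileTW}; iterating gives $V_0\in H^s_+$ for all $s$, so $V_0,V_0'\in C_0(\R)$. The crucial step is then to show that $V_0$ is a rational function with exactly one pole, which is simple and lies in the open lower half-plane. Granting this, write that pole as $-a-ip$ with $p>0$ and its residue as $\alpha e^{i\phi}$ with $\alpha>0$; then $V_0(x)=\alpha e^{i\phi}(x+a+ip)^{-1}$, and substituting back into \eqref{EQ:profileTW} as in the ``if'' part reads off $c=\alpha^2/(2p)>0$ and $\omega=\alpha^2/(4p^2)>0$. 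In particular $c\neq0$, which is what the bootstrap used; the remaining case $c=0$ is ruled out by a short separate argument (e.g.\ testing \eqref{EQ:profileTW} against suitable elements of $L^2_+$).

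I expect the rationality step to be the main obstacle; this is where the complete integrability of the Szeg\H{o} equation is needed. One route is to work on the Fourier side: with $g:=\widehat{V_0}$ supported on $[0,\infty)$, equation \eqref{EQ:profileTW} becomes the nonlocal relation $(\omega+c\xi)g(\xi)=\tfrac1{4\pi^2}\int_0^\infty(g*g)(\xi+s)\,\overline{g(s)}\,ds$ for $\xi\ge0$, and the aim is to deduce $g(\xi)=Ae^{-\lambda\xi}$ on $(0,\infty)$ for a constant $\lambda$ with $\Re\lambda>0$, which inverts to $V_0(x)=\tfrac{A}{-2\pi i}(x+i\lambda)^{-1}$, the claimed form. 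More conceptually, \eqref{EQ:profileTW} is an eigenrelation for the Hankel operator $H_{V_0}h:=\Pi_+(V_0\overline h)$ associated to $V_0$; the Lax-pair structure of \eqref{SZ} on $\R$ developed in \cite{Poc0,Poc1} forces $H_{V_0}$ to have rank one, and Kronecker's theorem then identifies rank-one Hankel operators with symbols $A/(x-z_0)$, $z_0$ in the lower half-plane, whence (intersecting with $L^2_+$) $V_0(x)=A/(x-z_0)$. Once $V_0$ is pinned to this shape, the rest is the routine coefficient matching from the ``if'' part; the reduction to \eqref{EQ:profileTW}, that computation, and the regularity bootstrap are all elementary.
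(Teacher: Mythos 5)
The paper does not prove this proposition; it is recalled from \cite{Poc1} without proof, so there is no in-paper argument to compare yours against. Judged on its own terms, your reduction to the profile equation $\omega V_0-icV_0'=\Pi_+(|V_0|^2V_0)$ and your verification of the ``if'' direction are complete and correct: the partial-fraction identity you use is exactly the one the paper itself exploits in the proof of Proposition~\ref{PROP:C3}, and matching the coefficients of $(y+ip)^{-1}$ and $(y+ip)^{-2}$ does force \eqref{c_omega}.

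The ``only if'' direction, however, is an outline rather than a proof, and that is where essentially all of the content of the proposition lies. The entire difficulty is to show that a nonzero $H^{1/2}_+$ solution of the profile equation must be rational with a single simple pole in the open lower half-plane, and you do not establish this: you name the two tools that do the work in \cite{Poc1} (the Lax-pair structure of the Hankel operator $H_{V_0}$, which forces finite --- in fact rank-one --- structure, and Kronecker's theorem, which converts rank one into the desired pole configuration), but neither the derivation of the relevant eigenrelation for $H_{V_0}$ from the profile equation nor the rank-one conclusion is carried out. The exclusion of $c=0$ is likewise only asserted; note that pairing the equation with $V_0$ merely gives $\omega\|V_0\|_{L^2}^2=\|V_0\|_{L^4}^4$, which yields no contradiction, so something genuinely more than a ``short separate argument'' by testing is needed there as well. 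In short: correct strategy, correct easy direction, but the hard implication is asserted rather than proved.
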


Next, we introduce a basic construction of two traveling waves of the Szeg\H{o} equation
whose distance between each other exhibits 
a relative growth in time. 
This basic construction plays an essential role in the 
proof of Theorem \ref{THM:main} (i). 
Moreover, this immediately yields the failure of local uniform continuity 
of the solution map for the Szeg\H{o} equation in $H^s_+(\R)$, $s\in [0,\frac 12)$, 
as shown in Corollary \ref{COR:UC.SZ} below.

\begin{proposition}[Basic construction for the Szeg\H{o} equation]\label{PROP:SZ}
Let $s> -\frac 12$ and $\delta>0$. 
Given $0<\eps\ll 1$, there exist global solutions 
$\tilde V_1^\eps, \tilde V_2^\eps\in C(\R;H^s_+(\R))$ of \eqref{SZ} such that
\begin{equation}\label{SZ1}
\|\tilde V_1^\e(0)\|_{H^s_+}+\|\tilde V_2^\e(0)\|_{H^s_+}\les \eps, \qquad \|\tilde V_1^\eps(0)-\tilde V_2^\eps(0)\|_{H^s_+}\sim \eps|\log\eps|^{-\frac 12},
\end{equation}
and
\begin{equation}\label{SZ2}
\|\tilde V_1^\eps(t)-\tilde V_2^\eps(t)\|_{H^s_+}\ges \eps
\end{equation}
for all $t\geq \frac{\delta}{\eps^2}|\log\eps|$.
\end{proposition}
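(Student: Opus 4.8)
The plan is to exploit the explicit classification of traveling waves from the preceding proposition, choosing two traveling waves whose parameters are tuned so that at time $t=0$ they are very close in $H^s_+$, but whose phases $e^{-i\omega t}$ decohere over the long time scale $\frac{\delta}{\eps^2}|\log\eps|$. Concretely, I would take both waves to be rescaled versions of the same profile $\frac{1}{x+i}$ up to amplitude and a small frequency shift: set $\tilde V_j^\eps(t,x) = \eps a_j \frac{e^{-i\omega_j t}}{x-c_j t + i p_j}$ with $p_1=p_2=1$ (so the spatial profiles agree), amplitudes $\alpha_j = \eps a_j$ with $a_1,a_2$ comparable to $1$ but slightly different, and $c_j = \frac{\alpha_j^2}{2p_j} = \frac{\eps^2 a_j^2}{2}$, $\omega_j = \frac{\alpha_j^2}{4 p_j^2} = \frac{\eps^2 a_j^2}{4}$ as forced by \eqref{c_omega}. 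The key design choice is to make $a_1,a_2$ differ by an amount of order $|\log\eps|^{-1/2}$ (more precisely, arrange $|a_1^2 - a_2^2| \sim |\log\eps|^{-1}$, which with $a_j\sim 1$ forces $|a_1-a_2|\sim|\log\eps|^{-1}$ as well), so that $|\omega_1 - \omega_2| \sim \eps^2 |\log\eps|^{-1}$ and hence $|\omega_1 - \omega_2|\, t \sim \delta$ on the target time scale.

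Next I would verify the three estimates. Using Lemma~\ref{Fourier}(ii) with $p_j=1$, each $\|\tilde V_j^\eps(t)\|_{H^s_+} \sim \eps a_j \sim \eps$, giving the first bound in \eqref{SZ1}; since the Szeg\H o flow preserves these traveling waves, this holds for all $t$. For the difference at $t=0$, write
\begin{equation*}
\tilde V_1^\eps(0,x) - \tilde V_2^\eps(0,x) = \eps(a_1 - a_2)\frac{1}{x+i},
\end{equation*}
so by Lemma~\ref{Fourier}(ii) its $H^s_+$-norm is $\sim \eps |a_1 - a_2| \sim \eps|\log\eps|^{-1}$. This is slightly smaller than the $\eps|\log\eps|^{-1/2}$ demanded in \eqref{SZ1}, so I would instead calibrate the amplitude gap to $|a_1-a_2|\sim|\log\eps|^{-1/2}$ and correspondingly $|\omega_1-\omega_2|\sim \eps^2|\log\eps|^{-1/2}$, which still makes $|\omega_1-\omega_2|\,t\gtrsim\delta$ once $t\gtrsim \frac{\delta}{\eps^2}|\log\eps|^{1/2}$; if the statement genuinely needs the time $\frac{\delta}{\eps^2}|\log\eps|$, one keeps the $|\log\eps|^{-1}$ gap and absorbs the discrepancy with the difference of the remaining (spatial-translation) parameters, or simply notes $|\log\eps|^{-1}\le|\log\eps|^{-1/2}$ up to adjusting $\delta$ — this bookkeeping is exactly the kind of tuning carried out in \cite{GTV}.

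For the lower bound \eqref{SZ2} at time $t \ge \frac{\delta}{\eps^2}|\log\eps|$, the point is that the two spatial profiles have drifted by $(c_1 - c_2)t \sim \eps^2|\log\eps|^{-1}\cdot \eps^{-2}|\log\eps| \sim 1$, or more robustly that the phases have separated: writing
\begin{equation*}
\tilde V_1^\eps(t) - \tilde V_2^\eps(t) = \eps\Bigl( a_1 e^{-i\omega_1 t}\tfrac{1}{x-c_1t+i} - a_2 e^{-i\omega_2 t}\tfrac{1}{x-c_2t+i}\Bigr),
\end{equation*}
one translates in $x$ to line up, say, the first profile's center at the origin and then estimates the $\dot H^s_+$ (equivalently $H^s_+$, after rescaling) norm from below by a frequency-space computation as in Lemma~\ref{Fourier}(i): the Fourier transforms are $-2\pi i \eps a_j e^{-i\omega_j t} e^{-ic_j t\xi} e^{-\xi}\mathbf 1_{\xi>0}$ up to the common translation, and the phase mismatch $e^{-i(\omega_1-\omega_2)t}$ together with $e^{-i(c_1-c_2)t\xi}$ prevents cancellation on a fixed-measure set of low frequencies $\xi\in[0,1]$, where $e^{-\xi}\sim 1$, yielding $\gtrsim \eps$. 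The main obstacle, as usual in these constructions, is precisely this last step: one must check that the interference between the two traveling waves is \emph{genuinely} destructive — i.e.\ that after optimally translating one profile onto the other, the residual $L^2$ (or $\dot H^s$) mass at low frequencies is bounded below by a constant multiple of $\eps$, uniformly for all large $t$ — which requires a careful lower bound on $\int_0^1 |a_1 e^{i\theta_1(\xi,t)} - a_2 e^{i\theta_2(\xi,t)}|^2 |\xi|^{2s} e^{-2\xi}\,d\xi$ with $\theta_j$ affine in $\xi$; one handles this by noting that when the phase difference $\theta_1-\theta_2$ varies by $\gtrsim 1$ over $\xi\in[0,1]$ the integrand averages to $\gtrsim (a_1^2+a_2^2)\gtrsim 1$, and when it varies by $\ll 1$ one instead uses the amplitude gap $|a_1-a_2|$ — but here the drift $(c_1-c_2)t\sim1$ guarantees we are in (or can reduce to) the first, favorable regime.
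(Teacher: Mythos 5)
Your construction is exactly the paper's: the same traveling waves with $p=1$, $\alpha_1=\eps$, $\alpha_2=\eps(1+|\log\eps|^{-1/2})$, and the same underlying mechanism for \eqref{SZ2}, namely the relative spatial drift $(c_2-c_1)t$ decorrelating the two profiles on the Fourier side. Two points where your heuristics should be tightened, both resolved by the paper's clean execution. First, the $e^{-i\omega_jt}$ phase decoherence you lead with cannot be the operative mechanism: it is periodic in $t$, so it realigns infinitely often, whereas \eqref{SZ2} must hold for \emph{all} $t\ge\frac{\delta}{\eps^2}|\log\eps|$; only the drift gives a uniform statement. Second, with the correct calibration $\alpha_2-\alpha_1\sim\eps|\log\eps|^{-1/2}$ one has $c_2-c_1\sim\eps^2|\log\eps|^{-1/2}$, so the drift is $(c_2-c_1)t\ges\delta|\log\eps|^{1/2}\to\infty$, not merely $\sim1$; this matters because your claim that the low-frequency integrand ``averages to $\ges a_1^2+a_2^2$'' when the relative phase varies by $\ges1$ is false for moderate phase variation with small constant offset (the average can be as small as an absolute constant times the square of the variation), and it is the divergence of the drift that puts you uniformly in the favorable regime. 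The paper avoids the case analysis entirely: it expands $\|\tilde V_1^\eps(t)-\tilde V_2^\eps(t)\|_{\dot H^s_+}^2$ as the sum of the two squared norms minus a cross term $A$, and evaluates the cross term exactly via $\int_0^\infty\xi^{2s}e^{-\lambda\xi}d\xi=\Gamma(2s+1)\lambda^{-(2s+1)}$ for $\Re\lambda>0$ with $\lambda=2p-i(c_2-c_1)t$, giving $|A|\les\eps^2(\delta|\log\eps|^{1/2})^{-(2s+1)}\ll\eps^2$ --- note this is precisely where the hypothesis $s>-\frac12$ enters, a point your sketch does not surface. No translation-to-align step is needed.
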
 

The role of the parameter $\dl$ will be clear in the proof of
Proposition \ref{PROP:UC}, where it is chosen sufficiently small so as to apply
the approximation result of Proposition \ref{PROP:approx}.
\begin{proof}[Proof of Proposition \ref{PROP:SZ}]
We choose $\tilde V_j^\eps$ to be the following traveling waves of the Szeg\H{o} equation on $\R$:
\begin{equation}\label{V_j}
\tilde V_j^\eps(t,x):=\frac{\alpha_j e^{-i\omega_j t}}{x-c_jt+ip},
\end{equation}
where 
\begin{equation}\label{p_al}
p:=1, \qquad \alpha_1:=\eps, \qquad \alpha_2:=\eps(1+|\log\eps|^{-\frac 12}).
\end{equation}
By \eqref{c_omega}, notice that we have
\begin{align}\label{c_2-c_1}
c_1=\frac{\eps^{2}}{2}, \quad c_2=\frac{\eps^{2}(1+2|\log\eps|^{-\frac 12}+ |\log\eps|^{-1})}{2},
\quad c_2-c_1=\eps^{2}|\log\eps|^{-\frac 12}(1+o(1)).
\end{align}

\noi
Then, by Lemma \ref{Fourier}, it follows that
\begin{align*}
\|\tilde V_1^\eps(0)-\tilde V_2^\eps(0)\|_{H^s_+}=(\alpha_2-\alpha_1)\left\|\frac{1}{x+ip}\right\|_{H^s_+}
\sim \eps |\log\eps|^{-\frac 12},
\end{align*}
$\|\tilde V_1^\e(0)\|_{H^s_+}+\|\tilde V_2^\e(0)\|_{H^s_+}\les \eps$, and thus \eqref{SZ1} is satisfied.
Then, using again Lemma \ref{Fourier}, we have
\begin{align}
\|\tilde V_1^{\eps}(t)-\tilde V_2^\eps(t)\|_{\dot H^s_+}^2
&=\|\tilde V_1^\eps(t)\|_{\dot H^s_+}^2+\|\tilde V_2^\eps\|_{\dot H^s_+}^2
-4\pi\al_1\al_2\Re\left(e^{i(\o_2-\o_1)t}\int_0^\infty \xi^{2s}e^{-2p\xi}e^{i\xi(c_2-c_1)t}d\xi\right)\notag\\
&=\eps^2\frac{\pi \Gamma(2s+1)}{2^{2s-1}}(1+o(1))-A,\label{Anew}
\end{align}
where
\begin{equation*}
A:=4\pi\al_1\al_2\Re\left(e^{i(\o_2-\o_1)t}\int_0^\infty \xi^{2s}e^{-2p\xi}e^{i\xi(c_2-c_1)t}d\xi\right).
\end{equation*}
Next, we show that $|A|\ll \eps^2$.
This comes down to finding an expression for 
$\int_0^\infty x^ae^{-\ld x}dx$ for $a>-1$ and $\Re \ld>0$. If $\ld\in\R_+$,
then by a change of variables we have
\begin{align*}
\int_0^\infty x^ae^{-\ld x}dx=\frac{1}{\ld^{a+1}}\int_0^\infty x^ae^{-x}dx=\frac{\Gamma (a+1)}{\ld^{a+1}}.
\end{align*}
Since both $\int_0^\infty  x^ae^{-\ld x}dx$ and $\frac{\Gamma (a+1)}{\ld^{a+1}}$ are holomorphic in $\ld$
for $\Re \ld>0$ and since they coincide on $\R_+$, 
it follows that they coincide on $\{\ld: \Re\ld>0\}$.
Therefore,
\begin{align*}
\int_0^\infty \xi^{2s}e^{-\xi(2p-i(c_2-c_1)t)}d\xi=\frac{\Gamma(2s+1)}{(2p-i(c_2-c_1)t)^{2s+1}}.
\end{align*}
Taking $t\geq \frac{\delta}{\eps^2}|\log\eps|$ and using \eqref{c_2-c_1}, we get that
\begin{align*}
|A|\lesssim \frac{\eps^2}{|2-i(c_2-c_1)t|^{2s+1}}\lesssim \frac{\eps^2}{(\delta|\log\eps|^{\frac 12})^{2s+1}}\ll \eps^2.
\end{align*}
Combining this with \eqref{Anew}, we thus obtain that 
\begin{align*}
\|\tilde V_1^{\eps}(t)-\tilde V_2^\eps(t)\|_{\dot H^s_+}\sim_s \eps,
\end{align*}
which yields \eqref{SZ2}.
\end{proof}

\begin{corollary}
[Failure of local uniform continuity for SZ in $H^s_+(\R)$, $s\in [0,\frac 12)$]
\label{COR:UC.SZ}
Let $s\in [0,\frac 12)$. 
Given $0<\eps\ll 1$, there exist global solutions
$ V_1^\eps, V_2^\eps\in C(\R;H^s_+(\R))$ of \eqref{SZ} such that
$\| V_1^\e(0)\|_{H^s_+}+\| V_2^\e(0)\|_{H^s_+}\les 1$,
\begin{equation*}
\lim_{\eps\to 0}\| V_1^\eps(0)- V_2^\eps(0)\|_{H^s_+}=0,
\end{equation*}
and
\begin{equation*}
\liminf_{\eps\to 0}\| V_1^\eps- V_2^\eps\|_{L^\infty([0,T];H^s_+)}\gtrsim 1 \quad \text { for all } \quad T>0.
\end{equation*}
\end{corollary}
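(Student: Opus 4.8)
The plan is to deduce the corollary from the basic construction of Proposition~\ref{PROP:SZ} together with the scaling invariance of \eqref{SZ}: Proposition~\ref{PROP:SZ} already carries all the dynamical content, and what remains is to rescale so that the initial data has size $\sim 1$ instead of size $\sim\eps$ while, at the same time, the separation time is brought down to $0$.

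For $s\in(0,\tfrac12)$ I would fix $\delta=1$ in Proposition~\ref{PROP:SZ}, take the associated $\tilde V_1^\eps,\tilde V_2^\eps$, set $\lambda=\lambda(\eps):=\eps^{-1/s}\to\infty$, and define
\[
V_j^\eps(t,x):=\lambda^{1/2}\,\tilde V_j^\eps(\lambda t,\lambda x),\qquad j=1,2 .
\]
Since \eqref{SZ} is invariant under $V\mapsto\lambda^{1/2}V(\lambda t,\lambda x)$, and this transformation preserves the condition $\supp\ft V\subset[0,\infty)$, each $V_j^\eps$ is again a global solution of \eqref{SZ} in $C(\R;H^s_+(\R))$. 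The scaling fixes the $L^2$-norm and multiplies the $\dot H^s$-norm by $\lambda^s=\eps^{-1}$, so \eqref{SZ1} gives $\|V_j^\eps(0)\|_{H^s_+}\les1$ and $\|V_1^\eps(0)-V_2^\eps(0)\|_{H^s_+}\sim|\log\eps|^{-1/2}\to0$ (the $L^2$-parts being negligible). Setting
\[
t_\eps:=\frac{\delta}{\lambda\eps^2}\,|\log\eps|=\delta\,\eps^{\frac1s-2}|\log\eps|,
\]
we have $t_\eps\to0$ precisely because $s<\tfrac12$ makes $\tfrac1s-2>0$; moreover $\lambda t_\eps=\tfrac{\delta}{\eps^2}|\log\eps|$, so by the proof of Proposition~\ref{PROP:SZ} (the homogeneous estimate leading to \eqref{SZ2}), $\|\tilde V_1^\eps(\lambda t_\eps)-\tilde V_2^\eps(\lambda t_\eps)\|_{\dot H^s_+}\sim\eps$, and therefore
\[
\|V_1^\eps(t_\eps)-V_2^\eps(t_\eps)\|_{H^s_+}\ge\lambda^s\,\|\tilde V_1^\eps(\lambda t_\eps)-\tilde V_2^\eps(\lambda t_\eps)\|_{\dot H^s_+}\gtrsim 1 .
\]
Given any $T>0$ we have $t_\eps\in[0,T]$ for all small $\eps$, hence $\|V_1^\eps-V_2^\eps\|_{L^\infty([0,T];H^s_+)}\gtrsim 1$, which is the claimed $\liminf$ bound.

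For the endpoint $s=0$ the scaling above is inert on $L^2$, so I would instead rerun the computation of Proposition~\ref{PROP:SZ} with a shrinking imaginary part. Take
\[
\tilde V_j^\eps(t,x):=\frac{\alpha_j e^{-i\omega_j t}}{x-c_jt+ip},\qquad p:=\eps^2,\quad\alpha_1:=\eps,\quad\alpha_2:=\eps(1+\eps),
\]
with $c_j,\omega_j$ as in \eqref{c_omega}; these are global solutions of \eqref{SZ} in $C(\R;L^2_+(\R))$. By Lemma~\ref{Fourier}(ii) with $s=0$ one has $\|\tilde V_j^\eps(0)\|_{L^2}\sim1$ and $\|\tilde V_1^\eps(0)-\tilde V_2^\eps(0)\|_{L^2}\sim\eps\to0$, while $c_2-c_1\sim\eps$ and $\alpha_1\alpha_2\sim\eps^2$. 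Exactly as in the proof of Proposition~\ref{PROP:SZ}, the cross term in $\|\tilde V_1^\eps(t)-\tilde V_2^\eps(t)\|_{L^2}^2$ equals $4\pi\alpha_1\alpha_2\,\Re\!\bigl(e^{i(\omega_2-\omega_1)t}(2p-i(c_2-c_1)t)^{-1}\bigr)$, of modulus $\les\alpha_1\alpha_2/((c_2-c_1)t)\sim\eps/t$; hence at the time $t_\eps:=\sqrt\eps\to0$ it is $\les\sqrt\eps\to0$, so $\|\tilde V_1^\eps(t_\eps)-\tilde V_2^\eps(t_\eps)\|_{L^2}^2\ge\|\tilde V_1^\eps(t_\eps)\|_{L^2}^2+\|\tilde V_2^\eps(t_\eps)\|_{L^2}^2-o(1)\gtrsim 1$. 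As before this yields $\liminf_{\eps\to0}\|\tilde V_1^\eps-\tilde V_2^\eps\|_{L^\infty([0,T];L^2_+)}\gtrsim 1$ for every $T>0$.

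The only genuinely delicate point is the bookkeeping on the separation time: after rescaling one must verify that it still tends to $0$ as $\eps\to0$, and this is exactly where the restriction $s<\tfrac12$ enters (and why the endpoint $s=0$ has to be treated by letting $p\to0$ rather than by the scaling symmetry, which is inert on $L^2$). Apart from this, everything follows directly from Proposition~\ref{PROP:SZ} and Lemma~\ref{Fourier}.
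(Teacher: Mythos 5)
Your proof is correct and follows essentially the same route as the paper: for $s\in(0,\tfrac12)$ it is the identical scaling argument applied to the solutions of Proposition~\ref{PROP:SZ} with $\lambda=\eps^{-1/s}$, and for $s=0$ it is the same direct computation with two traveling waves of shrinking width $p\to0$, estimating the cross term via Lemma~\ref{Fourier}. The only difference is cosmetic: at $s=0$ you take $p=\eps^2$, $\alpha_2-\alpha_1=\eps^2$ (separation time $\sqrt\eps$) where the paper takes $p=\eps$, $\alpha_2=\alpha_1(1+|\log\eps|^{-1/2})$ (separation time $\eps|\log\eps|$); both parameter choices work.
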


\begin{proof}
First, we consider the case $s\in (0,\frac 12)$. We define
\[ V_j^\eps(t,x):=(\tilde V_j^\eps)_\ld (t,x)=\ld^{\frac 12}\tilde V_j^\eps(\ld t,\ld x),\]
where $\tilde V_j^\eps$ are as in Proposition \ref{PROP:SZ} with $\dl=1$, $j=1,2$, and $\ld=\eps^{-\frac 1s}$. 
The functions $V_j^\eps$ are still solutions of the Szeg\H{o} equation.
Notice that $\| V_1^\eps(t)- V_2^\eps(t)\|_{L^2_+}=\|\tilde V_1^\eps(\ld t)-\tilde V_2^\eps(\ld t)\|_{L^2_+}$,
while $\| V_1^\eps(t)- V_2^\eps(t)\|_{\dot H^s_+}=\eps^{-1}\|\tilde V_1^\eps(\ld t)-\tilde V_2^\eps(\ld t)\|_{\dot H^s_+}$ for all $t\in\R$. 
Combining these with \eqref{SZ1} and \eqref{SZ2}, we obtain
\begin{align*}
\| V_1^\eps(0)\|_{H^s_+}+\|V_2^\eps(0)\|_{H^s_+}&\les_s 1\\
 \| V_1^\eps(0)- V_2^\eps(0)\|_{H^s_+}&\sim_s |\log\eps|^{-\frac 12}\ll1,\\
\| V_1^\eps(t)- V_2^\eps(t)\|_{\dot H^s_+}&\sim_s 1
\end{align*}
for $t\geq \eps^{\frac 1s-2}|\log\eps|$.
Noting that $\eps^{\frac 1s-2}|\log\eps|\to 0$ as $\eps\to 0$ precisely when $0<s<\frac 12$
concludes the proof in the case $s\in (0,\frac 12)$.

Next, we turn to the case $s=0$. 
Because of the singularity at $s=0$ in the
scaling $V_j^\eps=(\tilde V_j^\eps)_\ld$, $\ld=\eps^{-\frac 1s}$,
we can no longer use the above approach. 
Instead, here we consider
\begin{align*}
V_j^\eps(t,x):=\frac{\al_je^{-i\o_jt}}{x-c_jt+ip},
\end{align*}
where
\begin{align*}
p:=\eps, \qquad \al_1:=\eps^{\frac 12}, \qquad \al_2:=\eps^{\frac 12}(1+|\log\eps|^{-\frac 12}).
\end{align*}
By \eqref{c_omega}, we notice that
\begin{align*}
c_1=\frac 12, \qquad c_2=\frac{(1+|\log\eps|^{-\frac 12})^2}{2}, \qquad c_2-c_1=|\log\eps|^{-\frac 12}(1+o(1)).
\end{align*} 
By Lemma \ref{Fourier}, we have that $\|V_1^\eps(0)\|_{L^2_+}=\sqrt{\pi}$,
$\|V_2^\eps(0)\|_{L^2_+}=\sqrt{\pi}(1+|\log\eps|^{-\frac 12})$, and
\begin{align*}
\|V_2^\eps(0)-V_1^\eps(0)\|_{L^2_+}=\eps^{\frac 12}|\log\eps|^{-\frac 12}\left\|\frac{1}{x+i\eps}\right\|_{L^2_+}\sim |\log\eps|^{-\frac 12}\ll 1.
\end{align*}
Moreover,
\begin{align*}
\| V_1^{\eps}(t)- V_2^\eps(t)\|_{L^2_+}^2
&=\|V_1^\eps(t)\|_{L^2_+}^2+\| V_2^\eps\|_{L^2_+}^2
-A'\geq 2\pi -A',
\end{align*}
where
\begin{equation*}
A':=4\pi\al_1\al_2\Re\left(e^{i(\o_2-\o_1)t}\int_0^\infty e^{-2p\xi}e^{i\xi(c_2-c_1)t}d\xi\right).
\end{equation*}
Arguing as in the proof of Proposition \ref{PROP:SZ}, we have for $t\geq \eps|\log\eps|$ that
\begin{align*}
|A'|\les \frac{\eps}{|2\eps-i(c_2-c_1)t|}\les |\log\eps|^{-\frac 12}\ll 1. 
\end{align*}
Therefore, for $t\geq \eps|\log\eps|$, we have indeed that
\begin{align*}
\| V_1^{\eps}(t)- V_2^\eps(t)\|_{L^2_+}\sim 1.
\end{align*}

\end{proof}

\begin{remark}\label{REM:UC SZ}
{\rm 
In the proof of Corollary \ref{COR:UC.SZ},
instead of using slightly different approaches for the
cases $s\in (0,\frac 12)$ and $s=0$, 
one can choose to work, for all $s\in [0,\frac 12)$, 
with the following traveling waves of the Szeg\H{o} equation:
\begin{align*}
V_j^\eps(t,x):=\frac{\al_je^{-i\o_jt}}{x-c_jt+ip}, \quad \text{where} \quad p:=\eps, \qquad \al_1:=\eps^{s+\frac 12}, \qquad \al_2:=\eps^{s+\frac 12}(1+|\log\eps|^{-\frac 12}).
\end{align*} 
For $s\in (0,\frac 12)$, however, we preferred to use the basic construction in Proposition \ref{PROP:SZ} 
together with a scaling argument, as a preamble to our proof of the 
failure of local uniform continuity for NHW in $H^s(\R)$, $s\in (0,\frac 12)$, in Proposition \ref{PROP:UC}.
}
\end{remark}

Next, we recall the 
result from \cite{Poc2} on long time approximation of solutions of the NHW equation \eqref{HW}
by solutions of the Szeg\H{o} model \eqref{SZ}. 
\begin{proposition}[\cite{Poc2}]\label{PROP:approx}
Let $0<\eps\ll1$, $\delta>0$ sufficiently small, $s>\frac 12$, and $f\in H^s_+(\R)$.
Let $\mu\in\{-1,+1\}$ and $u_\mu\in C(\R; H^s)$ be the solution of \eqref{HW} 
with initial data $u_\mu(0)=\eps f$. 

Let $V\in C(\R;H^s_+)$ be the solution of the Szeg\H{o} equation \eqref{SZ}
with the same initial data $V(0)=\eps f$. 
Assume that $\|V(t)\|_{H^s}\leq C\eps$ for all $0\leq t\leq \frac{\delta}{\eps^2}|\log\e|$. 
Then, for any $0\leq t\leq \frac{\delta}{\eps^2}|\log\e|$, the following holds:
\[\|u_\mu(t,\cdot)-V(t,\cdot -t)\|_{H^s}\leq C_\ast \eps^{2-C_0\delta},\]
where $C_0>0$ is an absolute constant and $C_{\ast}=C_\ast (\|f\|_{H^{\frac 12}})$.
\end{proposition}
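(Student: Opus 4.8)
\noi The plan is to pass to the interaction representation, recognize the Szeg\H{o} flow as the (oscillation-free) resonant normal form of NHW, remove the leading non-resonant interactions by one step of a Poincar\'e--Dulac normal form (equivalently, one integration by parts in time), and close with Gronwall's inequality on $[0,T_\eps]$, where $T_\eps:=\frac{\delta}{\eps^2}|\log\eps|$.

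\noi\textbf{Interaction representation and the resonant term.} Put $w(t):=e^{it|D|}u_\mu(t)$, so that $u_\mu(t)=e^{-it|D|}w(t)$, $w(0)=\eps f$, and $\|w(t)\|_{H^s}=\|u_\mu(t)\|_{H^s}$. Since $V(t)\in H^s_+$ and $|D|=-i\pa_x$ on $H^s_+$, one has $e^{-it|D|}V(t)=V(t,\cdot-t)$, so the claim is equivalent to $\|w(t)-V(t)\|_{H^s}\les\eps^{2-C_0\delta}$ for $0\le t\le T_\eps$. The profile solves $i\pa_t w=\mu\,\mathcal N[w]$ with $\mathcal N[w](t):=e^{it|D|}\big(|e^{-it|D|}w|^2\,e^{-it|D|}w\big)$. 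Now $e^{-it|D|}w=(\Pi_+w)(\cdot-t)+(\Pi_-w)(\cdot+t)$, so expanding the cube one finds that the term built only out of $\Pi_+w$ equals, after applying $e^{it|D|}$,
\[\Pi_+\big(|\Pi_+w|^2\Pi_+w\big)+e^{2it|D|}\,\Pi_-\big(|\Pi_+w|^2\Pi_+w\big)=:\mathcal N_{\mathrm{res}}[w]+\mathcal N_{\mathrm{nr}}^{(1)}[w],\]
while the remaining seven terms of the expansion, each containing at least one factor $\Pi_-w$, are collected into $\mathcal N_{\mathrm{nr}}^{(2)}[w]$. The decisive point is that $\mathcal N_{\mathrm{res}}[w]=\Pi_+(|\Pi_+w|^2\Pi_+w)$ carries \emph{no} oscillatory factor; since $\Pi_+V=V$ for $V\in H^s_+$, the profile $V$ solves exactly $i\pa_t V=\mu\,\mathcal N_{\mathrm{res}}[V]=\mu\,\Pi_+(|V|^2V)$, which is \eqref{SZ} when $\mu=1$ and is carried to \eqref{SZ} by the norm-preserving involution $V(t,x)\mapsto\overline{V(t,-x)}$ when $\mu=-1$.

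\noi\textbf{The error equation and Gronwall.} Set $r:=w-V$; then $r(0)=0$ and
\[i\pa_t r=\mu\big(\mathcal N_{\mathrm{res}}[w]-\mathcal N_{\mathrm{res}}[V]\big)+\mu\,\mathcal N_{\mathrm{nr}}^{(1)}[w]+\mu\,\mathcal N_{\mathrm{nr}}^{(2)}[w].\]
We run a continuity argument on the hypothesis $\|w(t)\|_{H^s}\le2C\eps$ over $[0,T_\eps]$, using that $H^s(\R)$ is a Banach algebra for $s>\frac12$ while $e^{\pm it|D|}$ is unitary on $H^s$. The first term obeys the trilinear Lipschitz bound $\les(\|w\|_{H^s}^2+\|V\|_{H^s}^2)\|\Pi_+w-V\|_{H^s}\les\eps^2\|r\|_{H^s}$ (using $\|V(t)\|_{H^s}\le C\eps$); the third obeys $\|\mathcal N_{\mathrm{nr}}^{(2)}[w]\|_{H^s}\les\|\Pi_-w\|_{H^s}\|w\|_{H^s}^2\le\eps^2\|r\|_{H^s}$, because $\Pi_-V=0$ forces $\Pi_-w=\Pi_-r$. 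For the second term one integrates by parts in time in $\int_0^t\mathcal N_{\mathrm{nr}}^{(1)}[w]\,dt'$: the oscillatory factor is $e^{2it'|D|}$, so after dividing by its phase (handled by a dyadic decomposition in the output frequency, see below) one picks up boundary terms of size $\les\eps^{2}$, plus an interior term in which $\pa_{t'}$ falls on a factor of $w$; since $\pa_{t'}w=-i\mu\,\mathcal N[w]$ is itself cubic, this interior term is $\les\eps^{-1}\!\int_0^t\eps^2\cdot\eps^3\,dt'\les\eps^4T_\eps\les\eps^2|\log\eps|$. Hence, as $r(0)=0$,
\[\|r(t)\|_{H^s}\les\eps^2|\log\eps|+\eps^2\int_0^t\|r(t')\|_{H^s}\,dt',\qquad 0\le t\le T_\eps,\]
and Gronwall gives $\|r(t)\|_{H^s}\les\eps^2|\log\eps|\,e^{C\eps^2 t}\les\eps^2|\log\eps|\,\eps^{-C\delta}\les\eps^{2-C_0\delta}$ on $[0,T_\eps]$ for a suitable absolute constant $C_0$; since then $\|w(t)\|_{H^s}\le\|V(t)\|_{H^s}+\|r(t)\|_{H^s}\le C\eps+o(\eps)$, the bootstrap closes.

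\noi\textbf{Main obstacle.} The difficulty is concentrated in the integration-by-parts step: unlike for Schr\"odinger, the non-resonant phase of the half-wave is only of size comparable to the output frequency itself (the modulation of $\mathcal N_{\mathrm{nr}}^{(1)}$ is $2|D|\Pi_-$), so dividing by it is an \emph{unbounded} operation at low output frequency and the naive bound $\||\Pi_+w|^2\Pi_+w\|_{H^s}\les\eps^3$ is not by itself enough. One must therefore localize in frequency: above a scale $|\xi|\gtrsim\eps^2$ the multiplier $1/|\xi|$ costs at most an $\eps^{-1}$ factor in $L^2_\xi$, which is overcome by the extra smallness gained when $\pa_{t'}$ falls on a factor of $w$; below that scale the output band has measure $O(\eps^2)$ and is disposed of by a crude direct estimate. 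Arranging that \emph{every} error term stays below $\eps^{2-C_0\delta}$ uniformly in $t\le T_\eps$ is the crux, and the clean way to organize it is to run the whole scheme in a Bourgain-type space $X^{s,b}(\R\times\R)$ adapted to $i\pa_t-|D|$, where the low-modulation contributions are absorbed by the $b$-index; the slack that makes the argument close is precisely the admissible loss $\eps^{-C_0\delta}$, which originates from $e^{C\eps^2 T_\eps}=\eps^{-C_0\delta}$ together with the smallness $\|w(t)\|_{H^s}\sim\eps$ furnished by the standing hypothesis on $\|V(t)\|_{H^s}$.
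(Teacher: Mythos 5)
First, a point of context: this paper does not prove Proposition \ref{PROP:approx} at all — it is imported verbatim from \cite{Poc2} — so the comparison below is with the strategy of that reference (and of \cite{GG12}, on which it builds). Your route is essentially that one: pass to the profile $w=e^{it|D|}u_\mu$, observe that for Hardy-space data the entire positive-frequency self-interaction is resonant (the phase $|\xi|-|\xi_1|+|\xi_2|-|\xi_3|$ vanishes identically when all frequencies are nonnegative), so the Szeg\H{o} flow is the exact resonant dynamics; remove the oscillating piece $e^{2it|D|}\Pi_-\big(|\Pi_+w|^2\Pi_+w\big)$ by one integration by parts in time; absorb every term containing a factor $\Pi_-w=\Pi_-r$ into the Gronwall term; and read off the loss $\eps^{-C_0\delta}$ from $e^{C\eps^2 T_\eps}$. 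Your diagnosis of the main obstacle — the $1/|\xi|$ singularity of the normal form at zero output frequency, which is absent on $\T$ where $|\xi|\ge1$ on the range of $\Pi_-$ — is exactly right, and your cutoff at $|\xi|\sim\eps^2$ does close it. (A cleaner variant: do not integrate by parts at all, but bound the time integral of the phase by $\min(t,|\xi|^{-1})$, whose $L^2_\xi$-norm is $\sim t^{1/2}$, giving $\eps^3T_\eps^{1/2}\lesssim\eps^2|\log\eps|^{1/2}$ directly; the $X^{s,b}$ machinery you invoke at the end is unnecessary.) Two small repairs: for $s>\frac12$ the weight $\jb{\xi}^s/|\xi|$ is not square-integrable on $\{|\xi|\ge\eps^2\}$, so you must split further at $|\xi|=1$ and use $1/|\xi|\le1$ there; and the boundary term of the integration by parts contributes $O(\eps^2)$ to $\|r(t)\|_{H^s}$ pointwise in $t$, which is harmless but should be recorded outside the Gronwall integral.

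The genuine gap is the focusing case. Your error equation needs $i\partial_t V=\mu\,\Pi_+(|V|^2V)$, whereas $V$ is by hypothesis the solution of \eqref{SZ}, i.e.\ of $i\partial_t V=+\Pi_+(|V|^2V)$. For $\mu=-1$ the equation for $r=w-V$ then contains the non-small source $-2\Pi_+(|V|^2V)=O(\eps^3)$, which over the time $T_\eps\sim\delta\eps^{-2}|\log\eps|$ accumulates to $O(\delta\eps|\log\eps|)\gg\eps^{2-C_0\delta}$, and the argument does not close. The involution $V(t,x)\mapsto\overline{V(t,-x)}$ does conjugate the focusing resonant system to \eqref{SZ}, but it also changes the initial datum from $\eps f$ to $\eps\overline{f(-\cdot)}$, so it does not show that the given $V$ solves the focusing resonant system; in general the two resonant flows with the same datum $\eps f$ separate by $O(1)$ in $H^s$ on this timescale (their traveling waves move in opposite directions). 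The correct effective equation for $\mu=-1$ is $i\partial_t V=-\Pi_+(|V|^2V)$, and you should prove the statement with $V$ solving $i\partial_t V=\mu\,\Pi_+(|V|^2V)$ — after which your argument goes through verbatim, and the applications in this paper are unaffected, since they only use $H^s$-norms of differences, which are invariant under the conjugation–reflection isometry. As written, your proposal does not establish the $\mu=-1$ assertion.
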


With this approximation result at hand, we are now ready to
state and prove the failure of uniform continuity 
of the solution map of \eqref{HW} on bounded sets of $H^s(\R)$, $s\in (0, \frac 12)$.
\begin{proposition}[Failure of local uniform continuity for NHW in $H^s(\R)$, $s\in (0, \frac 12)$]\label{PROP:UC}

Let $s\in (0,\frac 12)$ and $\mu\in\{-1,+1\}$. 
Given $0<\eps\ll 1$, there exist global solutions 
$u_1^\eps$ and $u_2^\eps$ of \eqref{HW} such that 
$\|u_1^\e(0)\|_{H^s}\les 1$, $\|u_2^\e(0)\|_{H^s}\les 1$,
\begin{equation*}
\lim_{\eps\to 0}\| u_1^\eps(0)- u_2^\eps(0)\|_{H^s}=0,
\end{equation*}
and
\begin{equation*}
\liminf_{\eps\to 0}\| u_1^\eps- u_2^\eps\|_{L^\infty([0,T];H^s)}\gtrsim 1 \quad \text { for all } \quad T>0.
\end{equation*}
\end{proposition}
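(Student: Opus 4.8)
The plan is to transfer the relative growth of the two Szeg\H o traveling waves of Proposition~\ref{PROP:SZ} to genuine solutions of \eqref{HW} by means of the long-time approximation of Proposition~\ref{PROP:approx}, and then to compress the resulting separation time to zero using the scaling symmetry of \eqref{HW}, exactly as in the passage from Proposition~\ref{PROP:SZ} to Corollary~\ref{COR:UC.SZ}. Fix $s\in(0,\frac12)$ and $\mu\in\{-1,1\}$. Let $C_0$ be the absolute constant from Proposition~\ref{PROP:approx} and choose $\dl>0$ small enough that Proposition~\ref{PROP:approx} applies and $C_0\dl<1$; also fix an auxiliary index $\s>\frac12$. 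Let $\tilde V_1^\eps,\tilde V_2^\eps$ be the traveling waves \eqref{V_j}--\eqref{p_al} provided by Proposition~\ref{PROP:SZ} for this $\dl$ and this $s$, so that \eqref{SZ1}--\eqref{SZ2} hold and, from the proof of Proposition~\ref{PROP:SZ}, in fact $\|\tilde V_1^\eps(t)-\tilde V_2^\eps(t)\|_{\dot H^s}\sim\eps$ for $t\ge\frac{\dl}{\eps^2}|\log\eps|$. Since $p=1$ and $\al_j\sim\eps$, each $\tilde V_j^\eps$ has a time-independent $H^\s$ norm that is $\les\eps$, and its common initial profile $\al_j/(x+i)$ lies in $H^\infty_+(\R)$ by Lemma~\ref{Fourier}.

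\emph{Transfer to NHW.} Let $\tilde u_j^\eps\in C(\R;H^\infty(\R))$ be the solution of \eqref{HW} with $\tilde u_j^\eps(0)=\tilde V_j^\eps(0)$; it is global and smooth since the mass $M(\tilde u_j^\eps(0))\sim\eps^2$ is small (Proposition~\ref{PROP:WP}). Writing $\tilde u_j^\eps(0)=\eps f_j$ with $\|f_j\|_{H^{\frac12}}$ bounded uniformly in $\eps$, Proposition~\ref{PROP:approx} gives, for $0\le t\le\frac{\dl}{\eps^2}|\log\eps|$,
\[\|\tilde u_j^\eps(t)-\tilde V_j^\eps(t,\cdot-t)\|_{H^\s}\le C_\ast\,\eps^{2-C_0\dl},\]
and the same bound then holds in $H^s$ since $s<\s$. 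Subtracting the $j=1$ and $j=2$ estimates — the translation $\cdot-t$ is common to both and drops out of every $H^s$ (and $\dot H^s$) norm — and combining with \eqref{SZ2} at the endpoint $t_\eps:=\frac{\dl}{\eps^2}|\log\eps|$, we obtain
\[\|\tilde u_j^\eps(0)\|_{H^s}\les\eps,\qquad \|\tilde u_1^\eps(0)-\tilde u_2^\eps(0)\|_{H^s}\sim\eps|\log\eps|^{-\frac12},\qquad \|\tilde u_1^\eps(t_\eps)-\tilde u_2^\eps(t_\eps)\|_{\dot H^s}\ges\eps,\]
the last bound using $\eps^{2-C_0\dl}\ll\eps$, which holds because $C_0\dl<1$ and $0<\eps\ll1$.

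\emph{Rescaling and conclusion.} Set $u_j^\eps:=(\tilde u_j^\eps)_\ld$ with $\ld:=\eps^{-1/s}$; these are again global solutions of \eqref{HW}, since scaling preserves the equation and the $L^2$ norm. As scaling multiplies the $\dot H^s$ norm by $\ld^s=\eps^{-1}$ and leaves the $L^2$ norm unchanged, the previous display becomes $\|u_j^\eps(0)\|_{H^s}\les1$, $\|u_1^\eps(0)-u_2^\eps(0)\|_{H^s}\les|\log\eps|^{-\frac12}\to0$, while at time $t_\eps^\ast:=\ld^{-1}t_\eps=\dl\,\eps^{\frac{1}{s}-2}|\log\eps|$,
\[\|u_1^\eps(t_\eps^\ast)-u_2^\eps(t_\eps^\ast)\|_{H^s}\ge\ld^s\|\tilde u_1^\eps(t_\eps)-\tilde u_2^\eps(t_\eps)\|_{\dot H^s}\ges1.\]
Since $s<\frac12$ we have $\frac{1}{s}-2>0$, hence $t_\eps^\ast\to0$; so for any fixed $T>0$ one has $t_\eps^\ast\in[0,T]$ for all sufficiently small $\eps$, giving $\|u_1^\eps-u_2^\eps\|_{L^\infty([0,T];H^s)}\ges1$ and therefore the asserted $\liminf$.

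\emph{Main obstacle.} The only point that goes beyond Corollary~\ref{COR:UC.SZ} is that Proposition~\ref{PROP:approx} controls the NHW--Szeg\H o difference merely in $H^\s$ with $\s>\frac12$, whereas the target space is $H^s$ with $s<\frac12$. This is harmless: the approximation error $O(\eps^{2-C_0\dl})$ is, for $\dl$ small, much smaller than the separation $\sim\eps$ of the two traveling waves, so it is negligible in $H^s$ just as in $H^\s$. What must be checked is the compatibility of two time scales — the Szeg\H o waves already separate in relative $\dot H^s$ distance by time $\sim\eps^{-2}|\log\eps|^{\frac12}$, which is $\le t_\eps=\frac{\dl}{\eps^2}|\log\eps|$, the horizon on which the approximation is valid — and it is precisely the slack encoded by the free parameter $\dl$ in Proposition~\ref{PROP:SZ} that lets us keep the approximation valid while taking $\dl$ small enough to beat $\eps^{2-C_0\dl}$.
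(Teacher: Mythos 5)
Your proposal is correct and follows essentially the same route as the paper: traveling waves from Proposition~\ref{PROP:SZ}, transfer to NHW via Proposition~\ref{PROP:approx} with $\dl$ small enough that $\eps^{2-C_0\dl}\ll\eps$, then the scaling $\ld=\eps^{-1/s}$ to push the separation time to zero. Your explicit handling of the $H^\s\hookrightarrow H^s$ step (the paper applies the approximation in $H^s$ directly, glossing over the fact that Proposition~\ref{PROP:approx} is stated for indices above $\frac12$) is a minor clarification, not a different argument.
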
 

\begin{proof}[Proof of Proposition \ref{PROP:UC}]
We set $\tilde v_j^\eps(t,x):=e^{-i|D|t}\tilde V_j^\eps(t,x)=\tilde V_j^\eps(t,x-t)$, $j=1,2$,
where $\tilde V_j^\eps$ are the traveling waves of the Szeg\H{o} equation
introduced in \eqref{V_j} and \eqref{p_al}. 
By Proposition \ref{PROP:SZ}, it follows that
\begin{equation}\label{SZ1bis}
\|\tilde v_1^\eps(0)-\tilde v_2^\eps(0)\|_{H^s}\sim \eps|\log\eps|^{-\frac 12}
\end{equation}
and
\begin{equation}\label{SZ2bis}
\|\tilde v_1^\eps(t)-\tilde v_2^\eps(t)\|_{H^s}\ges \eps
\end{equation}
for all $t\geq \frac{\delta}{\eps^2}|\log\eps|$,
where $\delta>0$ is a small real number to be chosen later.

Next, we denote by $\tilde u_j^\eps$, $j=1,2$,
the smooth solutions of \eqref{HW}
with initial data
\[\tilde u_j^\eps(0,x):=\tilde v_j^\eps(0,x)=\frac{\al_j}{x+i},\]
where $\al_j$ are as in \eqref{p_al}.
Note that the initial conditions $\tilde u_j^\eps(0)$
are sufficiently smooth and small
to guarantee 
the global existence of the corresponding solutions 
of NHW in both the defocusing and focusing cases.

By Lemma \ref{Fourier} 
we have
$\|\tilde u_j^\eps(0)\|_{H^s}\les\eps$ and $\|\tilde v_j^\eps(t)\|_{H^s}\sim \eps$ for all $t\in\R$.
Thus, applying Proposition \ref{PROP:approx},
it then follows that for all $0\leq t\leq \frac{\delta}{\eps^2}|\log\eps|$ we have
\begin{align}\label{u-v}
\|\tilde u_j^\eps(t)-\tilde v_j^\eps(t)\|_{H^s}\lesssim \eps^{2-C_0\delta},
\end{align}
where $C_0>0$ is an absolute constant.
Combining \eqref{SZ1bis}, \eqref{SZ2bis}, and \eqref{u-v},
we then obtain that:
\begin{align*}
\|\tilde u_1^\eps(0)-\tilde u_2^\eps(0)\|_{H^s}&\sim \eps|\log\eps|^{-\frac 12}\\
\left\|\left(\tilde u_1^\eps-\tilde u_2^\eps\right)\left(\frac{\delta}{2\eps^2}|\log\eps|\right)\right\|_{H^s}&\ges \eps,
\end{align*}
provided that $\delta$ is so small that $2-C_0\dl> 1$. 
We set $u_j^\eps:=(\tilde u_j^\eps)_{\lambda}$ with $\ld=\eps^{-\frac 1s}$,
and observe that $u_j^\eps$ is also a solution of NHW.
Then, we obtain as in the proof of Corollary \ref{COR:UC.SZ}
that
\begin{align*}
&\|u_1^\eps(0)\|_{H^s}+\|u_2^\eps(0)\|_{H^s}\les 1,\\
&\|u_1^\eps(0)- u_2^\eps(0)\|_{H^s}\sim |\log\eps|^{-\frac 12}\ll 1,\\
&\| u_1^\eps(T_\eps)- u_2^\eps(T_\eps)\|_{H^s}\ges 1,
\end{align*}
where $T_\eps:=\dl\eps^{\frac1s -2}|\log\eps|\ll 1$.
This concludes the proof of the proposition.
\end{proof}

\begin{remark}\label{REM:GTV}
{\rm
Setting $v_j^\eps:=(\tilde v_j^\eps)_\ld$
with $\ld=\eps^{-\frac 1s}$ (where $\tilde v_j^\eps$ were defined in the proof of Proposition \ref{PROP:UC}) and $\tilde\eps:=\eps^{\frac 1s}$, we have that
\[v_j^\eps(t,x)=\frac{\tilde \al_j e^{-i\tilde \omega_j t}}{x-(1+c_j)t+i\tilde\eps},\]
where
\begin{align*}
\tilde \alpha_1=\tilde\eps^{s+\frac 12}, \qquad \tilde \alpha_2=\tilde\eps^{s+\frac 12}(1+C(s)|\log\tilde\eps|^{-\frac 12}), \qquad \tilde\omega_j=\omega_j \tilde\eps^{-1}, \qquad j=1,2. 
\end{align*}
Then, we can reformulate \eqref{u-v} as
\[\|u_j^\eps(t)-v_j^\eps(t)\|_{H^s}\lesssim \tilde\eps^{s(1-C_0\delta)}\]
for all $0\leq t\les \delta\tilde\eps^{1-2s}|\log\tilde\eps|$ and $\delta>0$ sufficiently small. 

We remark that $v_j^\eps$ are 
(translated) traveling waves of the Szeg\H{o} equation on $\R$
analogous to the traveling waves considered in \cite{GTV} for the Szeg\H{o} equation on $\T$. 
In \cite[Proposition 3.1]{GTV}, the difference
$\|u_j^\eps-v_j^\eps\|_{H^s}$ is bounded above by $\tilde\eps^{s-\frac 14}$.
This explains why the failure of local uniform continuity 
for periodic NHW obtained in \cite{GTV} occurs in $H^s(\T)$ with the restriction on the regularity $s\in (\frac 14,\frac 12)$.
In our context, the upper bound $\tilde\eps^{s(1-C_0\delta)}$ allows 
for the wider regularity range $s\in (0,\frac 12)$. 
}
\end{remark}

\begin{remark}\label{REM:UC}
{\rm 
The scaling used in the proof of Proposition
\ref{PROP:UC},
$u_j^\eps:=(\tilde u_j^\eps)_{\lambda}$ with $\ld=\eps^{-\frac 1s}$,
is only defined for $s\neq 0$.  
Therefore,
even though the solution map of the Szeg\H{o} equation on $\R$
fails to be locally uniformly continuous
in $L^2(\R)$, as proved in Corollary \ref{COR:UC.SZ} above,
one cannot use the approximation/scaling argument in Proposition \ref{PROP:UC}
to deduce the same behavior for the solution map of NHW on $L^2(\R)$.
}
\end{remark}

\section{Failure of local uniform continuity of the solution map of the focusing NHW in $L^2(\R)$}

In this section, we prove Theorem
\ref{THM:main} (ii). 
More precisely, we consider the focusing half-wave equation \eqref{HW} (with $\mu=-1$) and show that
its solution map fails to be uniformly continuous on bounded sets of $L^2(\R)$. 
As noted above in Remark \ref{REM:UC},
the approximation/scaling argument from Proposition \ref{PROP:UC}
can no longer be used in the case of $L^2(\R)$.
Consequently, we consider a different approach. 
Namely, instead of working with solutions of NHW that can be approximated by 
traveling waves of the Szeg\H{o} equation, 
we work directly with traveling waves of the focusing NHW.

\begin{proposition}[Failure of local uniform continuity for focusing NHW in $L^2(\R)$]\label{PROP:UCL2}
Given $0<\eps\ll 1$, there exist global solutions $u_1^\eps$ and $u_2^\eps$ 
of the focusing half-wave equation \eqref{HW} with $\mu=-1$ such that
$\|u_1^\e(0)\|_{L^2}\les 1$, $\|u_2^\e(0)\|_{L^2}\les 1$,
\begin{equation*}
\lim_{\eps\to 0}\| u_1^\eps(0)- u_2^\eps(0)\|_{L^2}=0,
\end{equation*}
and
\begin{equation*}
\liminf_{\eps\to 0}\| u_1^\eps- u_2^\eps\|_{L^\infty([0,T];L^2)}\gtrsim 1 \quad \text { for all } \quad T>0.
\end{equation*}
\end{proposition}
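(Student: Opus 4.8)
The plan is to work directly with traveling wave solutions of the focusing half-wave equation \eqref{HW} (with $\mu=-1$), following the strategy of Birnir--Kenig--Ponce--Svanstedt--Vega and Kenig--Ponce--Vega. The key point is that focusing NHW admits explicit soliton-type traveling waves $u(t,x)=e^{-i\omega t}\varphi(x-ct)$; by the relevant classification (analogous to the Szeg\H{o} case), one can take, up to normalization, rational profiles of the form $\varphi_c(x) = \frac{\gamma}{x+ip}$ after adjusting parameters so that the traveling wave ansatz solves \eqref{HW}. Since these are $L^2$-critical objects, the $L^2$-norm of such a traveling wave is a fixed constant independent of the speed $c$ (or can be normalized to a constant by the scaling $u_\lambda(t,x)=\lambda^{1/2}u(\lambda t,\lambda x)$, which preserves $L^2$). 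First I would write down two traveling waves $u_1^\eps$ and $u_2^\eps$ with the \emph{same} amplitude/shape parameter but with two slightly different speeds $c_1^\eps$ and $c_2^\eps$, chosen so that $|c_1^\eps-c_2^\eps|\to 0$ as $\eps\to 0$ but not too fast.

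The initial data comparison is then straightforward: at $t=0$ the two profiles differ only through a translation (or through the phase and speed parameters), so $\|u_1^\eps(0)-u_2^\eps(0)\|_{L^2}$ is controlled by $|c_1^\eps-c_2^\eps|$ times a fixed constant (by the continuity of translation in $L^2$, or by an explicit Fourier computation via Lemma \ref{Fourier}), and hence tends to $0$. For the separation at positive times, the mechanism is purely kinematic: at time $t$ the two traveling waves are centered at $c_1^\eps t$ and $c_2^\eps t$, so they drift apart by a distance $|c_1^\eps-c_2^\eps|\,t$. Once this separation is large compared to the (fixed) width of the solitons, the two functions have essentially disjoint supports (up to tails), whence $\|u_1^\eps(t)-u_2^\eps(t)\|_{L^2}^2 \approx \|u_1^\eps(t)\|_{L^2}^2 + \|u_2^\eps(t)\|_{L^2}^2 \gtrsim 1$. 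The quantitative estimate on the overlap integral $\langle u_1^\eps(t), u_2^\eps(t)\rangle$ is handled exactly as the estimate on $A$ (resp.\ $A'$) in the proof of Proposition \ref{PROP:SZ} and Corollary \ref{COR:UC.SZ}: using Lemma \ref{Fourier}(i) one expresses the inner product as $\int_0^\infty \xi^{0} e^{-2p\xi} e^{i\xi(c_2-c_1)t}\,d\xi$ up to constants and phases, which decays like $|2p - i(c_1^\eps-c_2^\eps)t|^{-1}$, hence is $\ll 1$ as soon as $(c_1^\eps-c_2^\eps)t \gg 1$.

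The final step is to choose the parameters so that the separation time $T_\eps$, at which $(c_1^\eps-c_2^\eps)T_\eps \gg 1$, can be taken arbitrarily small as $\eps\to 0$; this is where the scaling symmetry of NHW on $\R$ is essential (and is precisely what is unavailable in the periodic setting, cf.\ Remark \ref{REM:UC}). Concretely, after fixing a pair of traveling waves with $L^2$-data $\lesssim 1$, close initial data $\sim|\log\eps|^{-1/2}$, and separation at some time $t_\eps$, I would apply the scaling $u_j^\eps \mapsto (u_j^\eps)_\lambda$ with $\lambda = \lambda(\eps)\to\infty$ chosen so that the rescaled separation time $\lambda^{-1} t_\eps \to 0$ while the $L^2$-norms and the $L^2$-distance of the data are untouched (since scaling is $L^2$-isometric). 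Then $\|u_1^\eps-u_2^\eps\|_{L^\infty([0,T];L^2)} \gtrsim 1$ for every fixed $T>0$ once $\eps$ is small enough, which is the desired conclusion.

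The main obstacle I expect is not the kinematic separation argument — that is routine — but rather pinning down the correct family of explicit traveling waves of the \emph{focusing NHW} and verifying that they genuinely solve \eqref{HW} with the claimed relation among $\alpha$, $p$, $c$, $\omega$; unlike the Szeg\H{o} case (Proposition 2.3 in the excerpt), the profile equation for NHW involves $|D|$ acting on a rational function, and one must check that the nonlinear term $-|u|^2 u$ balances $|D|\varphi - c\varphi_x - \omega\varphi$ exactly. A secondary technical point is that these rational traveling waves are not Schwartz (they decay only like $|x|^{-1}$), so one should either work with the a priori solution map on a space adapted to such data, or truncate/regularize and pass to the limit; since the statement only requires global solutions of \eqref{HW} with $L^2$-data $\lesssim 1$ (which is in the small-mass global well-posedness regime of Proposition \ref{PROP:WP} once suitable regularity is arranged), this can be accommodated, but it requires a brief justification that the traveling waves lie in the relevant solution class.
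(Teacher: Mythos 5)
Your overall strategy is the right one and matches the paper's: two traveling waves of the focusing NHW with slightly different speeds, close at $t=0$, kinematically separating at later times, followed by a rescaling to shrink the separation time. However, there is a genuine gap at the foundation, and it is exactly the one you flag at the end without resolving: the focusing NHW does \emph{not} admit explicit rational traveling waves of the form $\varphi(x)=\frac{\gamma}{x+ip}$. For such $\varphi$ (Fourier-supported on $\xi>0$) one has $|D|\varphi=-i\varphi'$, so the profile equation reduces to $\omega\varphi+i(1-c)\varphi'=-|\varphi|^2\varphi$; but the partial-fraction decomposition of $|\varphi|^2\varphi=\frac{|\gamma|^2\gamma}{(x+ip)^2(x-ip)}$ contains a nonzero $\frac{1}{x-ip}$ component supported on negative frequencies, which cannot be matched by the left-hand side. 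This is precisely why the resonant model is the Szeg\H{o} equation, where the projector $\Pi_+$ removes that component. So the ``classification analogous to the Szeg\H{o} case'' you invoke does not exist, and everything downstream that relies on explicit Fourier formulas (the computation of the overlap integral ``exactly as the estimate on $A$'', the exact $L^2$-norms) collapses.

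The paper instead uses the non-explicit profiles $Q_\beta$ of Krieger--Lenzmann--Rapha\"el, $u_\beta(t,x)=Q_\beta\bigl(\frac{x-\beta t}{1-\beta}\bigr)e^{it}$, together with quantitative properties from \cite{GLPR}: Lipschitz dependence of $Q_\beta$ on $\beta$ and a bound on $x\partial_xQ_\beta$ (needed because the two initial data differ in both profile and dilation, not merely by a translation, so ``continuity of translation in $L^2$'' is not enough); a pointwise decay bound $|Q_\beta(x)|\lesssim \langle x\rangle^{-1}(1+(1-\beta)\langle x\rangle)^{-1}$ combined with a convolution lemma to control the overlap of the fat $|x|^{-1}$ tails (the ``essentially disjoint supports'' heuristic is delicate here since the profiles are barely square-integrable at infinity); and convergence of $Q_\beta$ to $Q^+$ as $\beta\to1$ to get a uniform lower bound on the masses. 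The speeds must be taken close to $1$ but bounded away from it, with $\beta_2-\beta_1$ calibrated against $1-\beta_2$, so that the widths $\sim(1-\beta_j)$ do not degenerate while the drift still produces separation. Without importing these inputs (or proving them), your argument cannot be completed; the kinematic skeleton is correct, but the existence and quantitative control of the traveling waves is the actual mathematical content of this proposition.
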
 

The solutions $u_1^\eps$,  $u_2^\eps$,
that we use to prove Proposition \ref{PROP:UCL2}
are conveniently rescaled versions of traveling waves of the focusing NHW.
In \cite{KLR}, Krieger, Lenzmann, and Rapha\"el showed 
that for any $-1<\beta<1$, the focusing cubic half-wave equation on $\R$ possesses 
a traveling wave solution $u_\b(t,x):=Q_\b\left(\frac{x-\b t}{1-\b}\right)e^{it}$,
where $Q_\beta\in H^{\frac 12}(\R)$ satisfies
\begin{align*}
\frac{|D|-\b D}{1-\b}Q_\b+Q_\b=|Q_\b|^2Q_\b.
\end{align*}
The crucial element in the proof of Proposition \ref{PROP:UCL2} is the use of 
certain properties of $Q_\b$ that we recall below from \cite{GLPR}.
\begin{lemma}[\cite{GLPR}, Properties of $Q_\b$]\label{LEM:Qb}
There exists $0<\b_\ast<1$ such that for all $\b,\tilde\b \in (\b_\ast,1)$,
the following hold:
\begin{align}
\|Q_\b- Q_{\tilde \b}\|_{H^{\frac 12}}&\leq C \frac{|\b-\tilde \b|}{\min (1-\b, 1-\tilde \b)}\label{Qb1}\\
\|x\pa_x Q_\b\|_{L^2}&\leq C\label{Qb2}\\
|Q_\b(x)|&\leq  \frac{C}{\jb{x}(1+(1-\b)\jb{x})},\label{Qb3}
\end{align}
where $C>0$ is an absolute constant {\rm(}independent of $\b$, $\tilde\b${\rm)} and $\jb{x}:=\sqrt{1+x^2}$.

Moreover, 
given $\b\in (\b_\ast, 1)$,
there exist constants $x(\b)\in \R$ and $\gamma\in\T$
such that, up to a subsequence,
\begin{align}
\|Q_\b(x-x(\b))-e^{i\gamma}Q^+(x)\|_{H^{\frac 12}}\leq C_1(1-\b)^{\frac 18},\label{Qb4}
\end{align}
where $Q^+(x)=\frac{2}{2x+i}$ and $C_1>2$ is an absolute constant. 
In particular, 
$Q_{\b}(\cdot-x(\b))\to e^{i\gamma}Q^+$ in $H^{\frac 12}(\R)$ as $\beta\to 1$.
\end{lemma}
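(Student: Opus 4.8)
The plan is to treat $Q_\beta$ as a perturbation, as $\beta\to1$, of the ``Hardy ground state'' $Q^+(x)=\tfrac{2}{2x+i}$, exploiting the structure of the Fourier multiplier $M_\beta:=\tfrac{|D|-\beta D}{1-\beta}+1$: its symbol equals $\xi+1$ on $\{\xi>0\}$ — \emph{independently of $\beta$} — and $1+\tfrac{1+\beta}{1-\beta}|\xi|$ on $\{\xi<0\}$. Thus $M_\beta\ge 1$, so $M_\beta^{-1}$ is a contraction on every $H^s(\R)$, and on negative frequencies it is strongly smoothing: $\|M_\beta^{-1}|D|\,\Pi_-\|_{H^s\to H^s}\les 1-\beta$. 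I would first read off the static information from the resolvent identity $Q_\beta=M_\beta^{-1}\bigl(|Q_\beta|^2Q_\beta\bigr)$, starting from the uniform bound $\|Q_\beta\|_{H^{1/2}}\les 1$ near $\beta=1$ available from \cite{KLR}. The convolution kernel $G_\beta$ of $M_\beta^{-1}$ is $O(\jb x^{-2})$, the negative-frequency side carrying a factor $1-\beta$, so a decay bootstrap on the resolvent identity (bound $|Q_\beta|^2Q_\beta$ in a weighted space, convolve with $G_\beta$, iterate) produces \eqref{Qb3}, the profile interpolating between the $\jb x^{-1}$ tail inherited from $Q^+$ for $|x|\les(1-\beta)^{-1}$ and the faster $\jb x^{-2}$ decay of $G_\beta$ beyond that scale. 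Differentiating, $\pa_xQ_\beta=(\pa_xG_\beta)\ast(|Q_\beta|^2Q_\beta)$ gains one power of decay, giving $|\pa_xQ_\beta(x)|\les\jb x^{-2}$ uniformly in $\beta$, hence \eqref{Qb2}.

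For the Lipschitz bound \eqref{Qb1} I would subtract the equations for $Q_\beta$ and $Q_{\tilde\beta}$ and set $W:=Q_\beta-Q_{\tilde\beta}$. The decisive algebraic observation is that the difference of the two multipliers \emph{vanishes on positive frequencies} and equals $\tfrac{2(\beta-\tilde\beta)}{(1-\beta)(1-\tilde\beta)}|\xi|$ on negative frequencies. Hence $W$ solves $M_\beta W=\bigl(\text{difference of the cubic nonlinearities}\bigr)-\tfrac{2(\beta-\tilde\beta)}{(1-\beta)(1-\tilde\beta)}|D|\,\Pi_-Q_{\tilde\beta}$, and after applying $M_\beta^{-1}$ the source term is bounded in $H^{1/2}$ by $\tfrac{|\beta-\tilde\beta|}{(1-\beta)(1-\tilde\beta)}\cdot(1-\beta)\cdot\|Q_{\tilde\beta}\|_{H^{1/2}}\les\tfrac{|\beta-\tilde\beta|}{1-\tilde\beta}$ using the smoothing estimate above; symmetrising in $\beta,\tilde\beta$ yields the $\min$ in the denominator. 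The difference of the nonlinearities is linear in $W$ with coefficients of size $O(1)$ by \eqref{Qb3}, so closing the estimate requires inverting, on $H^{1/2}$, the full $\R$-linear linearisation $\mathcal L_\beta:=M_\beta-2|Q_\beta|^2-Q_\beta^2\,\overline{(\,\cdot\,)}$ modulo its kernel. \textbf{This is the step I expect to be the main obstacle}: one must show $\mathcal L_\beta$ is coercive on the orthogonal complement of $\ker\mathcal L_\beta=\mathrm{span}_\R\{iQ_\beta,\pa_xQ_\beta\}$ (the generators of the phase and translation symmetries), \emph{uniformly} as $\beta\to1$, and account for the fact that the estimate for $W$ then controls $Q_\beta$ only up to a translation and phase (for the normalised family of \cite{KLR} one checks these corrections are themselves $O(\tfrac{|\beta-\tilde\beta|}{\min(1-\beta,1-\tilde\beta)})$). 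I would obtain the uniform coercivity by perturbation from the limit: on negative frequencies $\mathcal L_\beta$ is increasingly coercive (coefficient $\tfrac{1+\beta}{1-\beta}\to\infty$), while on positive frequencies it converges to the linearisation $\mathcal L_+$ of $(D+1)Q=\Pi_+(|Q|^2Q)$ at $Q^+$, whose nondegeneracy is available from the traveling-wave analysis of \cite{Poc1}.

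Finally, for \eqref{Qb4} I would run a concentration-compactness argument along a sequence $\beta\to1$: using the uniform $H^{1/2}$ bound and the fact that $\Pi_-Q_\beta\to0$ in $H^{1/2}$ (again from the smoothing of $M_\beta^{-1}$ on negative frequencies, which also gives $\|\Pi_-Q_\beta\|_{L^2}\les(1-\beta)^{1/2}$), one extracts, after a translation $x(\beta)$ and a phase $e^{-i\gamma}$, a limit in $H^{1/2}(\R)$ that is a nontrivial solution of the limiting profile equation $(D+1)Q=\Pi_+(|Q|^2Q)$ in $L^2_+(\R)$; by the uniqueness theory of \cite{Poc1} this limit must be $Q^+=\tfrac{2}{2x+i}$. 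Quantifying the compactness — feeding the $O((1-\beta)^{1/2})$ control of the negative frequencies and the $\jb x^{-1}$ tail from \eqref{Qb3} back through the equation and interpolating against the uniform $H^{1/2}$ bound — then upgrades this to the explicit algebraic rate $(1-\beta)^{1/8}$ stated in \eqref{Qb4}.
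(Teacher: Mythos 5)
First, a point of reference: the paper does not prove this lemma at all. It is quoted verbatim from \cite{GLPR}, and the authors state explicitly that the proof ``is lengthy and does not constitute the object of this paper'' and refer the reader to the appendix/body of \cite{GLPR}. So there is no in-paper proof to compare against; what you have written is a from-scratch sketch, and it must be judged as such. Your overall architecture (read the static bounds off the resolvent identity $Q_\b=M_\b^{-1}(|Q_\b|^2Q_\b)$ using that the multiplier is $\b$-independent on positive frequencies and strongly smoothing on negative ones; prove the Lipschitz bound by linearising and invoking coercivity of $\mathcal L_\b$; obtain the convergence to $Q^+$ by compactness plus uniqueness of the limiting Szeg\H{o}-type profile) is a sensible and essentially the expected strategy, and several of your computations (the symbol of $M_\b$, the bound $\|M_\b^{-1}|D|\Pi_-\|\les 1-\b$) are correct.

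However, as a proof the proposal has genuine gaps beyond the one you flag. (a) The uniform coercivity of $\mathcal L_\b$ on the complement of its kernel as $\b\to1$ is the analytic heart of \eqref{Qb1}, and it is only asserted; ``perturbation from the limit'' is not routine here because the convergence $Q_\b\to Q^+$ is exactly what \eqref{Qb4} is supposed to establish, so one must be careful not to argue circularly, and the nondegeneracy of the limiting linearisation $\mathcal L_+$ at $Q^+$ is itself a nontrivial input. (b) Estimate \eqref{Qb1} involves \emph{no} translation or phase adjustment, whereas a coercivity argument controls $W$ only modulo $\mathrm{span}_\R\{iQ_\b,\pa_xQ_\b\}$; you acknowledge this but do not resolve it, and resolving it requires fixing a canonical normalisation of the family $Q_\b$ and proving the modulation parameters are themselves Lipschitz in $\b$ with the stated blow-up rate --- a separate argument. (c) The kernel bound you use for the decay bootstrap is not correct as stated: the negative-frequency part of $M_\b^{-1}$ is $\tfrac1c K(\cdot/c)$ with $c=\tfrac{1+\b}{1-\b}$ for a fixed kernel $K$, so its $\jb{x}^{-2}$ tail carries a constant of order $(1-\b)^{-1}$, not $1-\b$; the bootstrap for \eqref{Qb3} goes through only with the correct two-scale kernel estimate (transition at $|x|\sim(1-\b)^{-1}$), which is precisely where the profile $\jb{x}^{-1}(1+(1-\b)\jb{x})^{-1}$ comes from and which you do not actually establish. (d) The rate $(1-\b)^{1/8}$ in \eqref{Qb4} is not derived: ``quantifying the compactness'' is named but no mechanism producing that specific exponent (interpolation between the $O((1-\b)^{1/2})$ control of $\Pi_-Q_\b$, the decay bounds, and the coercivity constant) is given. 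In short, the sketch identifies the right ingredients but proves none of the four estimates; each of \eqref{Qb1}--\eqref{Qb4} still requires the substantive work carried out in \cite{GLPR}.
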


\noi
The proof of Lemma \ref{LEM:Qb}
is lengthy and does not constitute the object of this paper. 
Therefore, we decided to omit it here and we refer the readers to \cite{GLPR} for details.

The following lemma is another useful tool in the proof of Proposition \ref{PROP:UCL2}.
\begin{lemma}[\cite{GLPR}, Auxiliary lemma]\label{LEM:auxiliary}
Let $\b\in (0,1)$. Then, the following holds:
\begin{align*}
\int_{\R}\frac{1}{\jb{x-y}(1+(1-\b)\jb{x-y})}\cdot \frac{1}{\jb{y}(1+(1-\b)\jb{y})}dy\lesssim \frac{|\log(1-\b)|}{\jb{x}(1+(1-\b)\jb{x})}.
\end{align*}
\end{lemma}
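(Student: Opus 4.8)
\noi
\textbf{Proof proposal for Lemma~\ref{LEM:auxiliary}.}
The plan is to recognize the left-hand side as the convolution of a fixed weight with itself, and to bound it by (the $L^1$-norm of the weight)$\times$(the weight) --- the standard ``$\jb{x}^{-2}$-type weights are almost a convolution algebra'' phenomenon. Write $\s:=1-\b\in(0,1)$ and set
\[
w(x):=\frac{1}{\jb{x}\bigl(1+\s\jb{x}\bigr)},
\]
so that the asserted inequality reads $(w*w)(x)\les |\log\s|\,w(x)$, with $|\log\s|=|\log(1-\b)|$. (Strictly $\|w\|_{L^1}\sim 1+|\log\s|$, so the bound is exactly the stated one in the regime $1-\b\ll1$ in which Lemma~\ref{LEM:auxiliary} is applied; for $\b$ bounded away from $1$ the harmless additive constant is absorbed.)

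First I would record two elementary properties of $w$. (a) \emph{Downward doubling, uniformly in $\s$:} $w$ is even and non-increasing in $|x|$, and if $\jb{y}\ge\frac12\jb{x}$ then $w(y)\le 4\,w(x)$ --- the key point being that $1+\s\jb{y}\ge 1+\frac12\s\jb{x}\ge\frac12(1+\s\jb{x})$, so the factor $1+\s\jb{\cdot}$ does not spoil the comparison uniformly in $\s$. (b) \emph{The $L^1$-norm:} by the substitution $u=\s x$ on the tail, $\int_1^\infty\frac{dx}{x(1+\s x)}=\log\frac{1+\s}{\s}$, whence $\|w\|_{L^1(\R)}\les 1+|\log\s|\les|\log(1-\b)|$ for $1-\b$ small.

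Then I would split $\R=R_1\cup R_2$ with $R_1:=\{y:|y|\ge|x-y|\}$ and $R_2:=\{y:|y|<|x-y|\}$. On $R_1$ the triangle inequality gives $|x|\le|x-y|+|y|\le 2|y|$, hence $\jb{y}\ge\frac12\jb{x}$ and, by (a), $w(y)\les w(x)$; therefore
\[
\int_{R_1}w(x-y)\,w(y)\,dy\les w(x)\int_{\R}w(x-y)\,dy=w(x)\,\|w\|_{L^1}.
\]
On $R_2$ one has symmetrically $|x-y|>|x|/2$, hence $\jb{x-y}\ge\frac12\jb{x}$ and $w(x-y)\les w(x)$, so $\int_{R_2}w(x-y)\,w(y)\,dy\les w(x)\int_{R_2}w(y)\,dy\le w(x)\,\|w\|_{L^1}$. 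Adding the two contributions and invoking (b) yields $(w*w)(x)\les|\log(1-\b)|\,w(x)$, which is the claim.

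The argument is essentially routine and I do not expect a serious obstacle. The only two points needing a little care are the uniform-in-$\s$ verification of the downward-doubling property (a) --- i.e.\ that the extra factor $1+\s\jb{\cdot}$ behaves well under $\jb{y}\ge\frac12\jb{x}$ --- and extracting the sharp logarithmic constant from $\|w\|_{L^1}$ (rather than a cruder negative power of $1-\b$); both reduce to one-line estimates once the problem is set up as above.
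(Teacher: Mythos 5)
Your argument is correct and is essentially the approach the paper indicates: an elementary decomposition of the integration domain exploiting that the larger of $|y|$, $|x-y|$ is $\ge |x|/2$, so that the corresponding weight is $\les w(x)$ and the remaining factor integrates to $\|w\|_{L^1}\les 1+|\log(1-\b)|$. The paper merely sketches a three-region version of this (splitting further according to $|y|\gtrless 2|x|$) and defers details to the appendix of \cite{GLPR}; your symmetric two-region split, packaged as the convolution bound $w*w\les\|w\|_{L^1}\,w$, is a cleaner organization of the same idea, and your uniform-in-$\s$ doubling estimate for the factor $1+\s\jb{\cdot}$ is the one genuinely non-trivial verification.

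One small correction to your parenthetical: what the argument proves is $(w*w)(x)\les\bigl(1+|\log(1-\b)|\bigr)w(x)$, and the additive constant is absorbed into $|\log(1-\b)|$ precisely when $1-\b$ is bounded away from $1$, not when $\b$ is bounded away from $1$. As $\b\to 0$ the stated right-hand side degenerates (since $|\log(1-\b)|\to 0$) while the left-hand side at $x=0$ stays $\gtrsim 1$, so the lemma as literally stated cannot hold with a uniform constant in that regime; this is a defect of the statement rather than of your proof, and it is immaterial here because the lemma is only invoked with $1-\b\ll 1$, where your bound coincides with the asserted one.
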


The proof of Lemma \ref{LEM:auxiliary} is elementary and consists 
of analyzing separately the following three regions of integration:
(i) $|y|\geq 2|x|$, (ii) $|y|\leq 2|x|$ and $|x-y|\geq \frac{|x|}{2}$, (iii) 
$|y|\leq 2|x|$ and $|x-y|\leq \frac{|x|}{2}$. Details can be found in the appendix of \cite{GLPR}.

\begin{proof}[Proof of Proposition \ref{PROP:UCL2}]
We choose $0<\eps\ll 1$, $c_0\in \left(0, 1-\max\left(\b_\ast, 1-\big(\frac{\pi}{2C_1^2}\big)^4\right)\right)$,
where $C_1$ is as in \eqref{Qb4}, and $\b_1$ and $\b_2$ such that
\begin{align}\label{betas}
0<\max\left(\b_\ast, 1-\big(\frac{\pi}{2C_1^2}\big)^4\right)<\b_1<\b_2<1-c_0, \quad \eps^{\frac 43}(1-\b_2)<\b_2-\b_1<\eps (1-\b_2).
\end{align}
In other words, the speeds $\b_1$ and $\b_2$ are sufficiently close to $1$, but away from $1$, 
and $\eps$-close to each other. 

We start by 
estimating the difference of the initial data of the solutions $u_{\b_1}$ and $u_{\b_2}$.
By \eqref{Qb1}, \eqref{Qb2}, and \eqref{betas}, it follows that
\begin{align}\label{A-1}
\|u_{\beta_1}(0,x)-&u_{\beta_2}(0,x)\|_{L^2}
=\left\|Q_{\beta_1}\left(\frac{x}{1-\b_1}\right)-Q_{\beta_2}\left(\frac{x}{1-\b_2}\right)\right\|_{L^2}\notag\\
&=\sqrt{1-\b_1}\left\|Q_{\b_1}(x)-Q_{\b_2}\left(\frac{1-\b_1}{1-\b
_2}x\right)\right\|_{L^2}\notag\\
&\leq\sqrt{1-\b_1}\left(\|Q_{\b_1}(x)-Q_{\b_2}(x)\|_{L^2}+\left\|Q_{\b_2}(x)-Q_{\b_2}\left(\frac{1-\b_1}{1-\b_2}x\right)\right\|_{L^2}\right)\notag\\
&\les\sqrt{1-\b_1}\frac{\b_2-\b_1}{1-\b_2}\left(1+\sup_{c\in [1,\frac{1-\b_1}{1-\b_2}]}\|x(\pa_x Q_\beta)(cx)\|_{L^2}\right)\les\sqrt{1-\b_1}\frac{\b_2-\b_1}{1-\b_2}\notag\\
&\les \frac{\b_2-\b_1}{\sqrt{1-\b_2}}+\frac{(\b_2-\b_1)^{\frac 32}}{1-\b_2}\les \eps \sqrt{1-\b_2}\les \eps.
\end{align}

\noi
Next, we estimate the difference of the solutions at time $t$.
\begin{align}
&\|u_{\b_1}(t,x)-u_{\b_2}(t,x)\|_{L^2}^2
=\left\|Q_{\b_1}\left(\frac{x-\b_1 t}{1-\b_1}\right)-Q_{\b_2}\left(\frac{x-\b_2 t}{1-\b_2}\right)\right\|_{L^2}^2\notag\\
&=\left\|Q_{\b_1}\left(\frac{x-\b_1 t}{1-\b_1}\right)\right\|_{L^2}^2
+\left\|Q_{\b_2}\left(\frac{x-\b_2 t}{1-\b_2}\right)\right\|_{L^2}^2
-2\Re\int_{\R} \cj{Q_{\b_1}\left(\frac{x-\b_1 t}{1-\b_1}\right)}Q_{\b_2}\left(\frac{x-\b_2 t}{1-\b_2}\right)dx\notag\\
&=(1-\b_1)\|Q_{\b_1}\|_{L^2}^2+(1-\b_2)\|Q_{\b_2}\|_{L^2}^2
-B,\label{A0}
\end{align}
where
\begin{align*}
B:=2(1-\b_1)\Re\int_{\R}\cj{Q_{\b_1}(y_1)}Q_{\b_2}\left(\frac{1-\b_1}{1-\b_2}y_1+\frac{\b_1-\b_2}{1-\b_2}t\right)dy_1.
\end{align*}
We first estimate $|B|$. By \eqref{Qb3} and setting $y_2:=\frac{1-\b_1}{1-\b_2}y_1+\frac{\b_1-\b_2}{1-\b_2}t$, we have
\begin{align}\label{A1}
|B|\les (1-\b_1)\int_{\R} \frac{1}{\jb{y_1}(1+(1-\b_1)\jb{y_1})}\cdot \frac{1}{\jb{y_2}(1+(1-\b_2)\jb{y_2})}dy_1.
\end{align}
Using $\b_2-\b_1<\eps(1-\b_2)<\eps (1-\b_1)$, we notice that $\frac{1-\b_2}{1-\b_1}=1-\frac{\b_2-\b_1}{1-\b_1}>1-\eps$ and thus,
\begin{align}
\frac{1}{\jb{y_2}}\sim \frac{1}{1+|y_2|}
&\sim \frac{1-\b_2}{1-\b_1}\cdot \frac{1}{\frac{1-\b_2}{1-\b_1}+|y_1+\frac{\b_1-\b_2}{1-\b_1}t|}
\les \frac{1-\b_2}{1-\b_1}\cdot \frac{1}{1-\e+|y_1+\frac{\b_1-\b_2}{1-\b_1}t|}\notag\\
&\les  \frac{1-\b_2}{1-\b_1}\cdot \frac{1}{\jb{y_1+\frac{\b_1-\b_2}{1-\b_1}t}}.\label{A2}
\end{align}
Similarly, we have
\begin{align}
\frac{1}{1+(1-\b_2)\jb{y_2}}
&\sim \frac{1}{1+(1-\b_1)\left(\frac{1-\b_2}{1-\b_1}+|y_1+\frac{\b_1-\b_2}{1-\b_1}t|\right)}
\les \frac{1}{1+(1-\b_1)\jb{y_1+\frac{\b_1-\b_2}{1-\b_1}t}}.\label{A3}
\end{align}
Therefore, by \eqref{A1}, \eqref{A2}, \eqref{A3}, and using Lemma \ref{LEM:auxiliary},
we obtain that
\begin{align*}
|B|&\les (1-\b_2)\int_{\R} \frac{1}{\jb{y_1}(1+(1-\b_1)\jb{y_1})}\cdot \frac{1}{\jb{y_1+\frac{\b_1-\b_2}{1-\b_1}t}\left(1+(1-\b_1)\jb{y_1+\frac{\b_1-\b_2}{1-\b_1}t}\right)}dy_1\\
&\les \frac{(1-\b_2)|\log (1-\b_1)|}{\jb{\frac{\b_1-\b_2}{1-\b_1}t}\left(1+(1-\b_1)\jb{\frac{\b_1-\b_2}{1-\b_1}t}\right)}
\lesssim \frac{(1-\b_1)(1-\b_2)|\log(1-\b_1)|}{\left((\b_2-\b_1)t\right)^2}.
\end{align*}
By choosing $t\geq \frac{1}{\sqrt{\eps}(\b_2-\b_1)}$ and using $(1-\b_1)|\log(1-\b_1)|\les 1$,
it follows that
\begin{align}
|B|&\ll \eps(1-\b_2)<\eps(1-\b_1).\label{A4}
\end{align}
On the other hand, by \eqref{Qb4}, using $\|Q^+\|_{L^2}=\sqrt{2\pi}$ and $1-\b_2<1-\b_1<\big(\frac{\pi}{2C_1^2}\big)^4$,
we have
\begin{align}\label{A5}
\|Q_{\b_j}\|_{L^2}\geq \|Q^+\|_{L^2}-\|Q_{\b_j}(\cdot-x(\b_j))-e^{i\gamma}Q^+\|_{L^2}
\geq \sqrt{2\pi}-C_1(1-\b_1)^{\frac 18}\geq \sqrt{\frac{\pi}{2}}
\end{align}
for $j=1,2$. 
Then, combining \eqref{A0}, \eqref{A4}, and \eqref{A5},
we obtain for $t\geq \frac{1}{\sqrt{\eps}(\b_2-\b_1)}$ that
\begin{align}\label{A6}
&\|u_{\b_1}(t)-u_{\b_2}(t)\|_{L^2}\ges \sqrt{1-\b_1}\ges \sqrt{c_0}.
\end{align}

Finally, by considering the rescaled variants of $u_{\b_j}$
$u_j^\eps(t,x):=\eps^{-1}u_{\b_j}\left(\e^{-2}t, \e^{-2}x\right)$
and using \eqref{Qb4}, \eqref{A-1}, and \eqref{A6}, we obtain that 
$\|u_j^\eps(0)\|_{L^2}=\sqrt{1-\b_j}\|Q_{\b_j}\|_{L^2}\les \sqrt{1-\b_j} \les \sqrt{1-\b_\ast}$
for $j=1,2$,
$\|u_1^\e(0)-u_2^\eps(0)\|_{L^2}\lesssim \eps$,
and
\begin{align*}
\|u_1^\e(t)-u_2^\eps(t)\|_{L^2}\ges 1
\end{align*}
for all $t\geq \frac{\eps^{\frac 32}}{\b_2-\b_1}$, and in particular for all $t\ges \eps^{\frac 16}$ (since $\b_2-\b_1>\eps^{\frac 43}(1-\b_2)$). 
This completes the proof. 
\end{proof}

\begin{remark}\label{defocNHW_L2}
{\rm 
(i). For the defocusing nonlinear half-wave equation \eqref{HW} with $\mu=1$,
traveling waves are not available and therefore 
the above approach cannot be used to prove failure of local uniform continuity 
in $L^2(\R)$. 

For the defocusing NHW, it is natural to attempt to use the approach of 
Christ, Colliander, and Tao from \cite{CCTarxiv}. 
Namely, one would like to consider two solutions
$u_j(t,x)=\phi_j(t,\nu x)$, 
$j=1,2$, $0<\nu\ll1$, where $\phi_j$ 
satisfy:
\begin{align*}
\begin{cases}
i\pa_t \phi_j-\nu |D|\phi_j=|\phi_j|^2\phi_j\\
\phi_j(0,x)=a_jw(x),
\end{cases}
\end{align*}
with $w$ a fixed Schwartz function, $a_1,a_2 \in [\frac 12,2]$, and $|a_1-a_2|\ll 1$. 
One can then approximate $\phi_j$, for a long time $0\leq t\leq c|\log\nu|^c$,
by the solution $\phi_j^{(0)}$ of dispersionless NLS,
$i\partial_t \phi^{(0)}_j=\big|\phi^{(0)}_j\big|^2\phi^{(0)}_j$,
with the same initial data $\phi^{(0)}_j(0,x)=\phi_j(0,x)$. 
For $\phi^{(0)}_j$ we have an explicit formula
$\phi^{(0)}_j(x)=a_jw(x)e^{ia_j^2t|w(x)|^2}$.
Therefore, one can show without difficulty that 
$\|\phi^{(0)}_1(0)-\phi_2^{(0)}(0)\|_{L^2}\les |a_1-a_2|\ll 1$ and
$\|\phi^{(0)}_1(t)-\phi_2^{(0)}(t)\|_{L^2}\ges 1$
provided that $t\gg \frac{1}{|a_1-a_2|}$.
However, the time on which the approximation of $\phi_j$
by $\phi_j^{(0)}$ holds,
does not seem to be sufficiently long to obtain the same 
statement for $u_1-u_2$. 
More precisely, one only obtains
$\frac{\|u_1(t)-u_2(t)\|_{L^2}}{\|u_1(0)-u_2(0)\|_{L^2}}\ges \frac{1}{|a_1-a_2|}\gg 1$
with $\|u_1(0)-u_2(0)\|_{L^2}\gg1$. 

In \cite{CCTarxiv}, in the case of the nonlinear Schr\"odinger equation 
below the scaling critical regularity, 
this issue is addressed by using the scaling and Galilean symmetries of NLS and,
as a result, one
obtains indeed failure of local uniform continuity.  
In our context, however, we are at the scaling critical regularity and, therefore, the scaling
is not useful. Moreover, NHW does not have a Galilean symmetry, 
nor a Lorenz symmetry. Therefore, unless a new invariance is found for NHW,
this approach does not seem viable. 
We note that in \cite{BGT}, 
the use of scaling was avoided for super-quintic NLS on a three dimensional manifold
by working with highly localized initial data.   
The strategy of \cite{BGT} can be applied to show failure of local uniform continuity 
for a defocusing super-cubic half-wave equation in $L^2(\R)$,
but not for the cubic NHW that we consider here.  

\smallskip
\noi
(ii). As we have seen in the proof of Proposition \ref{PROP:UC},
the approximation by 
the Szeg\H{o} equation does not seem sufficient to decide on
the failure of local uniform continuity for the defocusing NHW in $L^2(\R)$ (at least, not 
for the examples of solutions
considered in Proposition \ref{PROP:UC}).
It would be interesting to find a better approximation of the defocusing NHW 
that might provide us with a more accurate understanding of the dynamics.

}
\end{remark}


\section{Failure of $C^3$-smoothness of the solution map of NHW in $L^2(\R)$}

In the previous section, we discussed
the failure of local uniform continuity of the solution map of NHW in $L^2(\R)$. 
In Proposition \ref{PROP:UCL2},
we showed this for the focusing NHW. 
It remains, however, an open question
in the case of the defocusing NHW. 
In this section, we prove a weaker form of ill-posedness in $L^2(\R)$
for both the defocusing and focusing 
NHW.
Namely,
we show that the solution map of NHW
fails to be $C^3$-smooth in $L^2(\R)$ (assuming that it is well defined as a mapping on $L^2(\R)$).
\begin{proposition}[Failure of $C^3$-smoothness of the solution map of NHW in $L^2(\R)$]\label{PROP:C3}
Let $\mu \in \{-1,1\}$ and fix $0<t\leq1$. 
Denote the solution map of \eqref{HW} by $\Phi(t): u_0\mapsto u(t)$. 
Assuming that $\Phi(t)$ is well-defined as a map acting on $L^2(\R)$,
it then follows that $\Phi(t)$ is not $C^3$-smooth at $u_0=0$ in $L^2(\R)$.

\end{proposition}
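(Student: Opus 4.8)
The plan is to prove Proposition~\ref{PROP:C3} by Bourgain's scheme from \cite{Bourgain97}: if $\Phi(t)$ were $C^3$ at $u_0=0$, its third Fr\'echet derivative there would be a bounded symmetric trilinear map on $L^2(\R)$, and I would contradict this by exhibiting Schwartz data $\phi_N$ with $\|\phi_N\|_{L^2}\sim1$ for which the cubic term of the Picard iteration is arbitrarily large in $L^2$ as $N\to\infty$. First I would record the iteration: from $u(t)=e^{-it|D|}u_0-i\mu\int_0^t e^{-i(t-t')|D|}(|u|^2u)(t')\,dt'$, iterating once about the linear flow $U_1(t):=e^{-it|D|}u_0$ gives the cubic term
\[
U_3[u_0](t):=-i\mu\int_0^t e^{-i(t-t')|D|}\bigl(|U_1(t')|^2U_1(t')\bigr)\,dt',
\]
which is trilinear (over $\R$) and homogeneous of degree three in $u_0$; on small smooth data the a priori solution map is given by the locally convergent Picard series whose odd-degree homogeneous terms are $U_1,U_3,U_5,\dots$. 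Since on the dense subspace $\mathcal S(\R)$ the map $\Phi(t)$ (assumed to exist on $L^2$) coincides with this a priori solution map, a comparison of Taylor expansions at $0$ forces, after dividing out a combinatorial constant, an estimate $\|U_3[\phi](t)\|_{L^2}\le C(t)\,\|\phi\|_{L^2}^3$ valid for all $\phi\in\mathcal S(\R)$. My goal is to violate this.

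The counterexample will exploit the abundance of resonances of NHW. I would take $\phi_N\in\mathcal S(\R)$ with $\ft{\phi_N}(\xi)=N^{-1/2}\psi\bigl(\tfrac{\xi-N}{N}\bigr)$, where $\psi\ge0$ is a fixed nonzero smooth bump supported in $[0,1]$ and $N\gg1$, so that $\|\phi_N\|_{L^2}\sim1$ and $\supp\ft{\phi_N}\subset[N,2N]\subset(0,\infty)$. Computing $U_3$ on the Fourier side, every contributing frequency triple $(\xi_1,\xi_2,\xi_3)$ lies in $[N,2N]^3$, so the output frequency $\xi=\xi_1-\xi_2+\xi_3$ satisfies $\xi\ge0$ (using $\xi_2\le2N\le\xi_1+\xi_3$) and the resonance function $|\xi|-|\xi_1|+|\xi_2|-|\xi_3|=\xi-\xi_1+\xi_2-\xi_3$ vanishes identically. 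Hence the time integral in $\ft{U_3}(t,\xi)$ equals $t$ exactly, and for an absolute constant $c\ne0$,
\[
\ft{U_3[\phi_N]}(t,\xi)=c\,\mu\,t\,e^{-it\xi}\int_{\xi_1-\xi_2+\xi_3=\xi}\ft{\phi_N}(\xi_1)\,\ft{\phi_N}(\xi_2)\,\ft{\phi_N}(\xi_3)\,d\xi_1\,d\xi_2 .
\]
The integral is the convolution of three nonnegative bumps of height $\sim N^{-1/2}$ and width $\sim N$, hence is $\gtrsim N^{1/2}$ for all $\xi$ in an interval of length $\sim N$ centered near $\xi=2N$; therefore $\|U_3[\phi_N](t)\|_{L^2}\gtrsim tN$, and since $\|\phi_N\|_{L^2}^3\sim1$ this contradicts the estimate above once $N$ is large. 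The same family $\{\phi_N\}$ works for every fixed $t\in(0,1]$, and the argument is insensitive to $\mu=\pm1$.

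I expect the main obstacle to be not the estimate itself --- the triple-convolution lower bound is elementary precisely because $\psi\ge0$, so there is genuinely no cancellation --- but the bookkeeping linking $\Phi(t)$ to $U_3$: one must argue carefully that the cubic Picard term is the normalized third derivative at $0$ of the (only assumed to exist) $L^2$ solution map. For this one compares $\Phi(t)$ with the a priori solution map on the dense subspace $\mathcal S(\R)$, uses the homogeneity of the Picard terms $U_{2k+1}$ (passing to the data $\lambda\phi$ and letting $\lambda\to0$) to isolate $U_3$ as the cubic Taylor coefficient, and then uses that a continuous trilinear map is automatically bounded to obtain $\|U_3[\phi](t)\|_{L^2}\lesssim\|\phi\|_{L^2}^3$ on $\mathcal S(\R)$. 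It is also worth stressing, following \cite{Bourgain97}, that this scheme yields only the failure of $C^3$-smoothness, which is strictly weaker than the failure of uniform continuity; this is exactly why it applies to both $\mu=\pm1$, including the defocusing case left open in the previous section.
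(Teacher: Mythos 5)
Your proposal is correct, and it follows the same overall framework as the paper's proof (Bourgain's $C^3$ argument: the assumed smoothness forces the bound $\|U_3[\phi](t)\|_{L^2}\les \|\phi\|_{L^2}^3$ on Schwartz data, which is then violated), but the counterexample family and the mechanism for the lower bound are genuinely different. The paper takes the unnormalized Hardy-space data $f_\eps(x)=\frac{1}{x+i\eps}$ with $\|f_\eps\|_{L^2}=(2\eps)^{-1/2}$, uses that $e^{-it|D|}$ acts as translation on $\Pi_\pm$, decomposes $|f_\eps|^2f_\eps$ explicitly into partial fractions to compute $\Pi_\pm(|f_\eps|^2f_\eps)$ in closed form, and finds $\|U_3[f_\eps](t)\|_{L^2}\ges t\,\eps^{-5/2}\gg \eps^{-3/2}\sim\|f_\eps\|_{L^2}^3$. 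You instead take $L^2$-normalized bump data concentrated at high frequency $N$ and exploit the identical vanishing of the resonance function $|\xi|-|\xi_1|+|\xi_2|-|\xi_3|$ on positive frequencies together with the positivity of the triple convolution, obtaining $\|U_3[\phi_N](t)\|_{L^2}\ges tN$ against $\|\phi_N\|_{L^2}^3\sim 1$. Your computation is essentially the one the paper performs later in Lemma \ref{LEM:U3} for the norm-inflation result, so it unifies parts (iii) and (iv), and it transfers more readily to the fractional dispersions $|D|^\beta$ of Section \ref{sec:FNLS} (where no explicit rational computation is available); the paper's choice, on the other hand, yields completely explicit constants and ties the failure of smoothness to the Szeg\H{o} structure $\Pi_+f_\eps=f_\eps$. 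Two cosmetic points: the peak of your triple convolution sits near $\xi=\tfrac{3N}{2}$ rather than $2N$ (irrelevant, since all that matters is an interval of length $\sim N$ inside $[0,3N]$, on which the phase vanishes because every output frequency $\xi=\xi_1-\xi_2+\xi_3\ge 0$); and in isolating $U_3$ as the cubic Taylor coefficient you can shortcut the Picard-series bookkeeping by differentiating Duhamel's formula three times in $\delta$ along the ray $\delta\mapsto\delta\phi$ at $\delta=0$, exactly as the paper does, which gives $\frac{d^3}{d\delta^3}\Phi(t)(\delta\phi)\big|_{\delta=0}=6\,U_3[\phi](t)$ directly.
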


\begin{proof}
If the solution map $\Phi(t)$ were to be $C^3$-smooth at zero in $L^2(\R)$,
then there would exist $C>0$ such that for all $f\in L^2(\R)$: 
\[\left\|\frac{d^3 \Phi(t)(\delta f)}{d\delta^3}\Big|_{\delta=0}\right\|_{L^2}\leq C \|f\|_{L^2}^3.\]
In the following we show that such an estimate cannot hold with a constant
independent of $f\in L^2(\R)$. 

By Duhamel's formula, we have
\begin{align*}
\Phi(t)(\delta f)=\delta e^{-it|D|}f-i\int_0^t e^{-i(t-t')|D|}|\Phi(t')(\delta f)|^2\Phi(t')(\delta f)dt'.
\end{align*}
In turn, $\frac{d\Phi(t)(\delta f)}{d\delta}\Big|_{\delta=0}=e^{-it|D|}f$,
$\frac{d^2 \Phi(t)(\delta f)}{d\delta^2}\Big|_{\delta=0}=0$, and
\begin{align*}
\frac{d^3 \Phi(t)(\delta f)}{d\delta^3}\Big|_{\delta=0}=-6i\int_0^te^{-i(t-t')|D|}(|e^{-it'|D|}f|^2e^{-it'|D|}f)(x)dt'.
\end{align*}
To prove the failure of $C^3$-smoothness of $\Phi(t)$ at zero in $L^2(\R)$,
we show that
\begin{align}\label{C3}
\left\|\int_0^te^{-i(t-t')|D|}(|e^{-it'|D|}f|^2e^{-it'|D|}f)(x)dt'\right\|_{L^2}\leq C\|f\|_{L^2}^3
\end{align}
cannot hold uniformly in $f\in L^2(\R)$.

Consider $f_\eps(x)=\frac{1}{x+i\eps}\in L^2_+(\R)$. 
By Lemma \ref{Fourier}, 
we have that 
\begin{equation}\label{C30}
\|f_\eps\|_{L^2}=\frac{1}{\sqrt{2\eps}}.
\end{equation}
Then, noticing that
$e^{-it|D|}\Pi_+g(x)=\Pi_+g (x-t)$, $e^{-it|D|}\Pi_-g(x)=\Pi_-g (x+t)$, for all $g\in L^2(\R)$, along with $\Pi_+f_\eps=f_\eps$, 
it follows that
\begin{align}
\int_0^te^{-i(t-t')|D|}&(|e^{-it'|D|}f_\e|^2e^{-it'|D|}f_\e)(x)dt'
=\int_0^t e^{-i(t-t')|D|}(|f_\e|^2f_\e)(x-t')dt'\notag\\
&=\int_0^t \left(\Pi_+(|f_\e|^2f_\e)(x-t) + \Pi_-(|f_\e|^2f_\e)(x+t-2t')\right)dt'\notag\\
&=t\Pi_+(|f_\e|^2f_\e)(x-t) + \int_0^t\Pi_-(|f_\e|^2f_\e)(x+t-2t')dt'.\label{C31}
\end{align}
Decomposing into simple fractions gives
\begin{align*}
|f_\eps|^2f_\eps(x)=\frac{1}{4\eps^2}\cdot \frac{1}{x+i\eps}-\frac{1}{2i\eps}\cdot \frac{1}{(x+i\eps)^2}
-\frac{1}{4\eps^2}\cdot \frac{1}{x-i\eps},
\end{align*}
and observe that the first two terms are supported on non-negative frequencies,
while the last term is supported on negative frequencies. Thus,
\begin{align*}
\Pi_+(|f_\eps|^2f_\eps)(x)=\frac{1}{4\eps^2}f_\eps(x)+\frac{1}{2i\eps}\pa_x f_\eps(x), \qquad \Pi_-(|f_\e|^2f_\e)(x)=-\frac{1}{4\e^2}\cdot \frac{1}{x-i\e}.
\end{align*}
Therefore, using again Lemma \ref{Fourier},
we obtain that
\begin{equation}\label{C32}
\|t\Pi_+(|f_\eps|^2f_\eps)(x-t)\|_{L^2}=\frac{t\sqrt{5\pi}}{4\eps^2\sqrt{\eps}}.
\end{equation}
and
\begin{align*}
\mathcal{F}\left(\int_0^t\frac{dt'}{x+t-2t'-i\eps}dt' \right)(\xi)
=2\pi e^{it\xi}\frac{1-e^{-2it\xi}}{2\xi}e^{\eps\xi}\pmb{1}_{\xi\leq 0}.
\end{align*}
Using $|\frac{1-e^{-2it\xi}}{2\xi}|\leq t$, we then have that
\begin{align}
\left\|\int_0^t\Pi_-(|f_\e|^2f_\e)(x+t-2t')dt'\right\|_{L^2_x}
&=\frac{1}{4\eps^2}\left\|\mathcal{F}\left(\int_0^t\frac{dt'}{x+t-2t'-i\eps}\right)(\xi)\right\|_{L^2_\xi}\notag\\
\lesssim \frac{1}{\eps^2}\left(t^2+\int_{-\infty}^{-1}\frac{1}{\xi^2}d\xi\right)^{\frac 12}
\lesssim \frac{1}{\e^2}. \label{C33}
\end{align}
Combining \eqref{C31}, \eqref{C32}, and \eqref{C33}, and recalling that $0<t\leq 1$ is fixed,
we obtain that
\begin{align}\label{C34}
\Big\| \int_0^te^{-i(t-t')|D|}&(|e^{-it'|D|}f_\e|^2e^{-it'|D|}f_\e)(x)dt' \Big\|_{L^2_x}
\gtrsim \frac{t}{\e^2\sqrt{\e}}
\end{align}
for $\eps$ sufficiently small.
By making $\eps$ tend to zero,
it follows by \eqref{C30} and \eqref{C34}
that there is no constant $C$ for which \eqref{C3} holds.
Therefore, indeed, $\Phi(t)$ is not
$C^3$-smooth in $L^2(\R)$.

\end{proof}

\begin{remark}\label{REM:C3 SZ}
{\rm
(i) The proof of Proposition \ref{PROP:C3}
can be easily adapted to show the failure of $C^3$-smoothness of the solution map 
of NHW in $H^s(\R)$ for any $s\in [0,\frac 12)$.

\smallskip
\noi
(ii)  A simplified variant of the proof of Proposition \ref{PROP:C3} yields the failure of $C^3$-smoothness of the solution map of the Szeg\H{o} equation
in $H^s_+(\R)$, $s\in [0,\frac 12)$. 
}
\end{remark}

%
%
%
%
%

\section{Norm inflation property for NHW in $H^s(\R)$, $s<0$}
\label{SEC:inflation}

This section is dedicated to the proof of Theorem \ref{THM:main} (iv).
Namely, we show that a $H^s$-norm inflation phenomenon occurs
for certain solutions of NHW when $s<0$.

The analysis in this section follows closely 
an argument developed by Kishimoto \cite{Kishimoto1}
in the context of the one-dimensional periodic
cubic nonlinear Schr\"odinger equation (see also \cite{Iwabuchi_Ogawa}).
An important tool is the use of an algebra contained in $L^2(\R)$.  
We choose this algebra to be the following (scaled) modulation space.
\begin{definition}
Given $A\geq 1$, let $I_A:=[-\frac A2, \frac A2)$.
We define $M_A(\R)$ to be the completion of $C^\infty(\R)$
with respect to the norm:
\begin{equation}
\|f\|_{M_A}:=\sum_{k\in A\Z}\|\hat{f}\|_{L^2(k+I_A)}.
\end{equation}
\end{definition}
\noi

Modulation spaces were introduced by Feichtinger in \cite{Feichtinger}
and the basic theory of these spaces was established in \cite{FG1, FG2}.
See also \cite{Benyi_Okoudjou} for an application
of modulation spaces to the local well-posedness theory of nonlinear dispersive PDEs.
In the present paper, we only use
the following two properties of the modulation space $M_A$.
\begin{lemma}[Properties of the modulation space $M_A$]\label{algebra}
Let $A\geq 1$.

\noi
{\rm (i)} There exists an absolute constant $C>0$ such that 
$\|f\|_{L^2}\leq C\|f\|_{M_A}$ for all $f\in M_A$. 

\smallskip
\noi
{\rm (ii)} There exists $C_2>0$ absolute constant such that for any $f,g\in M_A$
the following holds:
\begin{equation}
\|fg\|_{M_A}\leq C_2 A^{\frac 12}\|f\|_{M_A}\|g\|_{M_A}.
\end{equation}
\end{lemma}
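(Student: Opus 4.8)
The plan is to deduce both statements directly from the definition of $\|\cdot\|_{M_A}$. Part (i) is immediate from Plancherel together with the inclusion $\ell^2\subset\ell^1$. Since the intervals $\{k+I_A\}_{k\in A\Z}$ partition $\R$, with the paper's Fourier normalization we have $\|f\|_{L^2}^2=\frac{1}{2\pi}\|\ft f\|_{L^2(\R)}^2=\frac{1}{2\pi}\sum_{k\in A\Z}\|\ft f\|_{L^2(k+I_A)}^2\le\frac{1}{2\pi}\big(\sum_{k\in A\Z}\|\ft f\|_{L^2(k+I_A)}\big)^2=\frac{1}{2\pi}\|f\|_{M_A}^2$, using $\sum a_k^2\le(\sum a_k)^2$ for nonnegative $a_k$. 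This gives (i) with $C=(2\pi)^{-1/2}$, independent of $A$.

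For part (ii), the starting point is $\ft{fg}=\frac{1}{2\pi}\,\ft f*\ft g$ together with the decompositions $\ft f=\sum_{l\in A\Z}\ft f\,\mathbf 1_{l+I_A}$ and $\ft g=\sum_{m\in A\Z}\ft g\,\mathbf 1_{m+I_A}$. The key observation is a support localization: $(\ft f\,\mathbf 1_{l+I_A})*(\ft g\,\mathbf 1_{m+I_A})$ is supported in the sumset $(l+I_A)+(m+I_A)=(l+m)+[-A,A)$, an interval of length $2A$, which meets at most three of the partition intervals $k+I_A$; equivalently, for fixed $k\in A\Z$ the restriction $\ft{fg}\,\mathbf 1_{k+I_A}$ receives contributions only from pairs $(l,m)$ with $l+m\in\{k-A,\,k,\,k+A\}$. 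On each such piece, Young's inequality gives $\|(\ft f\,\mathbf 1_{l+I_A})*(\ft g\,\mathbf 1_{m+I_A})\|_{L^2}\le\|\ft f\|_{L^2(l+I_A)}\,\|\ft g\|_{L^1(m+I_A)}$, and Cauchy--Schwarz on the length-$A$ interval $m+I_A$ yields the gain $\|\ft g\|_{L^1(m+I_A)}\le A^{1/2}\|\ft g\|_{L^2(m+I_A)}$. Summing $\|\ft{fg}\|_{L^2(k+I_A)}$ over $k\in A\Z$, the constraint $l+m\in\{k-A,k,k+A\}$ costs at most a threefold overcounting, and the remaining double sum over $(l,m)$ factorizes, so that $\|fg\|_{M_A}\le\frac{3}{2\pi}A^{1/2}\|f\|_{M_A}\|g\|_{M_A}$. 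This is (ii) with $C_2=\frac{3}{2\pi}$, an absolute constant.

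I do not expect a genuine obstacle here: the argument is elementary and is essentially the standard proof that (rescaled) modulation spaces are algebras. The only point demanding a little care is the elementary counting of how many partition intervals a set of diameter $2A$ can intersect (namely at most three), since this is what keeps the constant $C_2$ absolute and confines all the $A$-dependence to the single factor $A^{1/2}$ coming from Hölder's inequality; the Fourier-normalization bookkeeping (the $2\pi$ factors) and the applications of Young's and Cauchy--Schwarz inequalities are routine.
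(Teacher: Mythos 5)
Your proof is correct, and it is the standard argument (Plancherel plus $\ell^2\subset\ell^1$ for (i); block decomposition of $\ft f*\ft g$, Young's and Cauchy--Schwarz inequalities, and the observation that a sumset $(l+I_A)+(m+I_A)$ meets at most three partition intervals for (ii)). The paper itself omits the proof, citing these as known properties of (rescaled) modulation spaces, so your write-up simply supplies the expected details; the only point worth double-checking, which you handle correctly, is the counting $l+m\in\{k-A,k,k+A\}$ that keeps $C_2$ absolute.
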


The algebra property (ii) in Lemma \ref{algebra}
allows one to easily show that NHW
is locally well-posed in $M_A$. 
Before stating this local well-posedness result,
we set the following notations 
for $\phi\in M_A$:
\begin{align}
U_1[\phi](t):&=e^{-it|D|}\phi\notag\\
U_k[\phi](t):&=-i\mu\sum_{\substack{k_1,k_2, k_3\geq 1\\ k_1+k_2+k_3=k}}
\int_0^te^{-i(t-\tau)|D|}\big(U_{k_1}[\phi]\cj{U_{k_2}[\phi]}U_{k_3}[\phi]\big)(\tau)d\tau.\label{Uk}
\end{align}
Here, $U_k[\phi]$ is the sum of all the terms 
that contain exactly $k$ factors $e^{-i\tau|D|}\phi$ 
in the Picard iteration process of constructing a solution of \eqref{HW} with initial
condition $\phi$.
Note also that $U_k[\phi]\equiv 0$ for all even $k$.
\begin{lemma}[Local well-posedness of NHW in $M_A$]\label{LWP}
Let $\mu\in\{-1,1\}$, $A\geq 1$, and $\phi\in M_A$.
There exists a unique solution $u\in C([0,T_\ast]; M_A)$ of \eqref{HW}, where
$T_\ast=C_3A^{-1}\|\phi\|_{M_A}^{-2}$ and $C_3>0$ is an absolute constant.
Moreover,
\begin{equation}\label{expansion}
u=\sum_{k=1}^\infty U_k[\phi],
\end{equation}
where the series converges absolutely in $C([0,T_\ast];M_A)$.
\end{lemma}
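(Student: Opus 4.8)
The plan is to run a standard Picard iteration (contraction mapping) argument, using the algebra property of $M_A$ from Lemma~\ref{algebra}~(ii) to close the estimates. First I would set up the Duhamel formulation: $u$ solves \eqref{HW} with $u(0)=\phi$ if and only if
\begin{equation*}
u(t)=e^{-it|D|}\phi-i\mu\int_0^te^{-i(t-\tau)|D|}\big(|u|^2u\big)(\tau)\,d\tau=:\Lambda u(t).
\end{equation*}
Since $e^{-it|D|}$ is an isometry on $M_A$ (it acts as a Fourier multiplier of modulus one, hence preserves each $\|\hat f\|_{L^2(k+I_A)}$), one has for $u,v\in C([0,T];M_A)$, writing $\|u\|_T:=\sup_{t\in[0,T]}\|u(t)\|_{M_A}$,
\begin{equation*}
\|\Lambda u\|_T\leq \|\phi\|_{M_A}+T\sup_{\tau\in[0,T]}\big\||u|^2u(\tau)\big\|_{M_A}\leq \|\phi\|_{M_A}+C_2^2A\,T\,\|u\|_T^3,
\end{equation*}
using Lemma~\ref{algebra}~(ii) twice (together with the fact that complex conjugation preserves $\|\cdot\|_{M_A}$, since $\widehat{\bar f}(\xi)=\overline{\hat f(-\xi)}$ and the partition $\{k+I_A\}_{k\in A\Z}$ is symmetric). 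A similar computation with the algebraic identity $|u|^2u-|v|^2v = $ (a sum of three trilinear terms, each with one factor $u-v$ and the others $u$ or $v$) gives
\begin{equation*}
\|\Lambda u-\Lambda v\|_T\leq 3C_2^2A\,T\,\big(\|u\|_T+\|v\|_T\big)^2\|u-v\|_T.
\end{equation*}

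Next I would choose the right ball and time. Let $R:=2\|\phi\|_{M_A}$ and let $B_R$ be the closed ball of radius $R$ in $C([0,T];M_A)$. Picking $T=T_\ast:=C_3A^{-1}\|\phi\|_{M_A}^{-2}$ with $C_3>0$ a sufficiently small absolute constant (chosen so that $C_2^2 A T_\ast R^2\le \tfrac12$ and $3C_2^2AT_\ast(2R)^2\le\tfrac12$), the two displayed estimates show $\Lambda:B_R\to B_R$ and $\Lambda$ is a contraction on $B_R$. The Banach fixed point theorem then yields a unique fixed point $u\in C([0,T_\ast];M_A)$, which is the desired solution; uniqueness in the full space $C([0,T_\ast];M_A)$ (not just in $B_R$) follows by a standard continuity/bootstrap argument on shorter time intervals, or can simply be stated in $B_R$ since that suffices for the applications.

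For the expansion \eqref{expansion}, the cleanest route is to observe that the Picard iterates $u^{(0)}:=0$, $u^{(n+1)}:=\Lambda u^{(n)}$ converge to $u$ in $C([0,T_\ast];M_A)$, and that $u^{(n)}$ is exactly the partial sum $\sum_{k=1}^{2n-1}U_k[\phi]$ grouped appropriately. Rather than tracking this combinatorics, I would instead prove directly that the series $\sum_k U_k[\phi]$ converges absolutely: by induction on $k$ (odd), using the same trilinear estimate as above on the definition \eqref{Uk}, one gets
\begin{equation*}
\sup_{t\in[0,T]}\|U_k[\phi](t)\|_{M_A}\leq (C_2^2AT)^{\frac{k-1}{2}}C_k\,\|\phi\|_{M_A}^{k}
\end{equation*}
for suitable combinatorial constants $C_k$ (counting the compositions of trilinear trees with $k$ leaves, so $C_k$ grows at most like $C^k$). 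Choosing $C_3$ small enough that $C^2C_2^2AT_\ast\|\phi\|_{M_A}^2<1$, the geometric-type series $\sum_k C^k (C_2^2AT_\ast)^{(k-1)/2}\|\phi\|_{M_A}^k$ converges, giving absolute convergence of \eqref{expansion} in $C([0,T_\ast];M_A)$. Finally, a term-by-term substitution into the Duhamel equation shows $\sum_k U_k[\phi]$ is a fixed point of $\Lambda$, hence equals $u$ by uniqueness. The main (minor) obstacle is purely bookkeeping: correctly identifying the combinatorial growth of $C_k$ and making sure the single absolute constant $C_3$ can be chosen to simultaneously satisfy the contraction conditions and the convergence condition for the series; there is no analytic difficulty once the algebra property of $M_A$ is in hand.
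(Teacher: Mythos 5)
Your proposal is correct and follows essentially the same route as the paper: a contraction mapping argument for $\Gamma u(t)=e^{-it|D|}\phi-i\mu\int_0^te^{-i(t-\tau)|D|}|u|^2u(\tau)\,d\tau$ on the ball $B(0,2\|\phi\|_{M_A})$ in $C([0,T_\ast];M_A)$, closed via the algebra property of $M_A$ and the unitarity of $e^{-it|D|}$. Your more explicit inductive bound on $\|U_k[\phi]\|_{M_A}$ (with the combinatorial constants and the $t^{(k-1)/2}$ gain from the time integration) is exactly the content of the paper's subsequent Lemmas~\ref{LEM:UkMA} and~\ref{LEM:ak}, which the paper proves separately rather than inside this lemma, where it simply notes that the partial sums of the Picard iteration converge uniformly.
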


\begin{proof}
The proof is via a standard fixed point argument. 
We consider the operator
\begin{equation*}
\Gamma u(t):=e^{-it|D|}\phi-i\mu\int_0^te^{-i(t-\tau)|D|}|u|^2u(\tau)d\tau.
\end{equation*}
By Lemma \ref{algebra}, we have for $u$ in the ball $B(0,2\|\phi\|_{M_A})$ in $C([0,T_\ast]; M_A)$ that
\begin{align*}
\sup_{t\in [0,T_\ast ]}\|\Gamma u(t)\|_{M_A}
\leq \|\phi\|_{M_A}+CT_\ast A\|u\|_{M_A}^3
\leq \|\phi\|_{M_A}\left(1+8CT_\ast A\|\phi\|_{M_A}^2\right)
\leq 2\|\phi\|_{M_A}
\end{align*} 
provided that $T_\ast\leq (8C)^{-1}A^{-1}\|\phi\|_{M_A}^{-2}$.
That is, $\Gamma$ maps the ball $B(0,2\|\phi\|_{M_A})$ into itself.
By making the constant $C$ in the above expression larger if needed,
we obtain similarly that $\Gamma$ is also a contraction of the ball $B(0,2\|\phi\|_{M_A})$.
This concludes the proof of the existence and uniqueness of the solution $u\in C([0,T_\ast]; M_A)$. 
The claim 
$u=\sum_{k=1}^\infty U_k[\phi]$ then follows immediately in the sense of the uniform convergence
of partial sums in $C([0,T_\ast];M_A)$.
\end{proof}

The following estimate of the $M_A$-norm
of $U_k[\phi]$ is useful in the proof of the norm inflation phenomenon. 
\begin{lemma}\label{LEM:UkMA}
There exists $C_2>0$ {\rm(}as in Lemma \ref{algebra}{\rm)}
such that for any $A\geq 1$, $k\geq 1$, and $\phi\in M_A$,
the following holds for all $t>0$:
\begin{equation}\label{UkMA}
\|U_k[\phi](t)\|_{M_A}\leq a_kt^{\frac{k-1}{2}}(C_2 A^{\frac 12}\|\phi\|_{M_A})^{k-1}\|\phi\|_{M_A},
\end{equation}
where $\{a_k\}_{k\in\mathbb N}$ is the sequence defined by 
\begin{equation*}
a_1=1, \quad\quad a_k=\frac{2}{k-1}\sum_{\substack{k_1,k_2, k_3\geq 1\\ k_1+k_2+k_3=k}}a_{k_1}a_{k_2}a_{k_3}, \quad k\geq 2.
\end{equation*}
\end{lemma}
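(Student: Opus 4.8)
The plan is to proceed by strong induction on $k$. The base case $k=1$ is immediate from the definition $U_1[\phi](t)=e^{-it|D|}\phi$ together with the fact that the free propagator $e^{-it|D|}$ is an isometry on $M_A$ (since it acts as a multiplier of modulus one on the Fourier side, and the $M_A$-norm only involves $|\hat f|$). Thus $\|U_1[\phi](t)\|_{M_A}=\|\phi\|_{M_A}$, matching the right-hand side with $a_1=1$.

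For the inductive step, fix $k\geq 2$ and assume \eqref{UkMA} holds for all indices $1\le j<k$. Starting from the defining recursion \eqref{Uk}, I would apply the triangle inequality (both for the sum over $k_1+k_2+k_3=k$ and for the time integral $\int_0^t\cdots d\tau$), then use that $e^{-i(t-\tau)|D|}$ is an $M_A$-isometry to drop the outer propagator, and finally invoke the algebra property Lemma~\ref{algebra}(ii) twice to estimate
\[
\|U_{k_1}[\phi]\overline{U_{k_2}[\phi]}U_{k_3}[\phi](\tau)\|_{M_A}\le (C_2A^{\frac12})^2\|U_{k_1}[\phi](\tau)\|_{M_A}\|U_{k_2}[\phi](\tau)\|_{M_A}\|U_{k_3}[\phi](\tau)\|_{M_A},
\]
where I also use that complex conjugation preserves the $M_A$-norm (it reflects the Fourier support, which is harmless since the norm sums $\|\hat f\|_{L^2(k+I_A)}$ over all $k\in A\mathbb Z$). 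Substituting the inductive hypothesis for each factor produces a power $\tau^{\frac{(k_1-1)+(k_2-1)+(k_3-1)}{2}}=\tau^{\frac{k-3}{2}}$, a factor $(C_2A^{\frac12}\|\phi\|_{M_A})^{(k_1-1)+(k_2-1)+(k_3-1)}=(C_2A^{\frac12}\|\phi\|_{M_A})^{k-3}$, a factor $\|\phi\|_{M_A}^3$, a combinatorial factor $a_{k_1}a_{k_2}a_{k_3}$, and the extra $(C_2A^{\frac12})^2$ from the two applications of the algebra property. Collecting the $A$ and $\|\phi\|_{M_A}$ powers gives exactly $(C_2A^{\frac12}\|\phi\|_{M_A})^{k-1}\|\phi\|_{M_A}$, as desired.

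The only remaining point is the time integration: $\int_0^t \tau^{\frac{k-3}{2}}\,d\tau=\frac{2}{k-1}t^{\frac{k-1}{2}}$ (valid since $k\ge 2$ makes the exponent $>-1$). This supplies the claimed power $t^{\frac{k-1}{2}}$ and the numerical factor $\frac{2}{k-1}$, which, multiplied by $\sum_{k_1+k_2+k_3=k}a_{k_1}a_{k_2}a_{k_3}$, is precisely the definition of $a_k$. This closes the induction.

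I do not anticipate a genuine obstacle here; the estimate is a routine induction once one observes the three facts that make it clean, namely that $e^{\pm it|D|}$ and complex conjugation are $M_A$-isometries and that Lemma~\ref{algebra}(ii) gives the trilinear bound with constant $(C_2A^{\frac12})^2$. The one place to be slightly careful is the bookkeeping of the exponents of $A$ and $\|\phi\|_{M_A}$: each of the two algebra applications contributes one factor $C_2A^{\frac12}$, and these combine with the $\sum(k_i-1)=k-3$ factors from the inductive hypotheses and the three factors $\|\phi\|_{M_A}$ from the cubic nonlinearity to yield the net exponents $k-1$ (for $C_2A^{\frac12}$) and $k$ (for $\|\phi\|_{M_A}$) appearing in \eqref{UkMA}.
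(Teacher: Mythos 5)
Your proposal is correct and follows essentially the same argument as the paper: induction on $k$, using the unitarity of $e^{-it|D|}$ on $M_A$, two applications of the algebra property contributing $C_2^2A$, and the integral $\int_0^t\tau^{\frac{k-3}{2}}d\tau=\frac{2}{k-1}t^{\frac{k-1}{2}}$ to produce the recursion defining $a_k$. The only (minor) addition is your explicit remark that complex conjugation preserves the $M_A$-norm, which the paper leaves implicit.
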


The proof of Lemma \ref{LEM:UkMA} is
essentially the same as that of an analogous result in \cite{Kishimoto1}
(see also \cite{Iwabuchi_Ogawa}),
with the only difference that here we are using 
the unitarity of the operator $e^{-it|D|}$ in $M_A$, instead of $e^{it\pa_x^2}$. 
For the sake of completeness, we choose to reproduce this proof here.

\begin{proof}
The proof follows by induction. 
The case $k=1$ is trivial. 
Let us now assume that \eqref{UkMA}
holds for $1,2,\dots, k-1$, and let us prove it for $k$.
By the unitarity of $e^{-it|D|}$ in $M_A$, Lemma \ref{algebra},
and the induction hypothesis,
it follows that
\begin{align*}
\|U_k[\phi](t)\|_{M_A}
&\leq C_2^2A\sum_{\substack{k_1,k_2, k_3\geq 1\\ k_1+k_2+k_3=k}}
\int_0^t\|U_{k_1}[\phi]\|_{M_A}\|U_{k_2}[\phi]\|_{M_A}\|U_{k_3}[\phi]\|_{M_A}d\tau\\
&\leq C_2^2A \frac{2}{k-1}\sum_{\substack{k_1,k_2, k_3\geq 1\\ k_1+k_2+k_3=k}}a_{k_1}a_{k_2}a_{k_3}
t^{\frac{k-1}{2}}(C_2 A^{\frac 12}\|\phi\|_{M_A})^{k-3}\|\phi\|_{M_A}^3\\
&\leq a_kt^{\frac{k-1}{2}}(C_2 A^{\frac 12}\|\phi\|_{M_A})^{k-1}\|\phi\|_{M_A}.
\end{align*}
\end{proof}

In order to bound sequences $\{a_k\}_{k\in\mathbb N}$
with similar properties to the one in Lemma \ref{LEM:UkMA},
we use the following Lemma from \cite{Kishimoto1}. 
\begin{lemma}[\cite{Kishimoto1}]\label{LEM:ak}
Let $\{a_k\}_{k\in\mathbb N}$ be a sequence of nonnegative real numbers for which there exists $C>0$
such that 
\begin{equation*}
a_k\leq C\sum_{\substack{k_1,k_2, k_3\geq 1\\ k_1+k_2+k_3=k}}a_{k_1}a_{k_2}a_{k_3}
\end{equation*}
for all $k\geq 2$. Then, the following holds:
\begin{equation*}
a_k\leq C_4^{k-1}a_1^k
\end{equation*}
for all $k\geq 1$, where $C_4=\frac{\pi^2}{6}(9C)^{\frac 12}$.
\end{lemma}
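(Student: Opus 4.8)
The plan is to use the classical generating function argument for Catalan-type recursions. First I would introduce the formal power series $F(x) := \sum_{k\geq 1} a_k x^k$ and compare it with the solution $G(x) := \sum_{k\geq 1} b_k x^k$ of the ``majorizing'' equation obtained by replacing the inequality $a_k \leq C \sum_{k_1+k_2+k_3=k} a_{k_1}a_{k_2}a_{k_3}$ (valid for $k\geq 2$) by the equality $b_k = C \sum_{k_1+k_2+k_3=k} b_{k_1}b_{k_2}b_{k_3}$ for $k\geq 2$, together with $b_1 = a_1$. Since the right-hand side of the recursion for $a_k$ (resp.\ $b_k$) involves only indices $k_1,k_2,k_3 \leq k-2 < k$, an immediate induction on $k$ gives $0\leq a_k \leq b_k$ for all $k\geq 1$; hence it suffices to prove the bound $b_k \leq C_4^{k-1} a_1^k$ with $C_4 = \frac{\pi^2}{6}(9C)^{1/2}$.

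Next I would derive a closed-form (algebraic) equation for $G$. From $b_k = C\sum_{k_1+k_2+k_3=k} b_{k_1}b_{k_2}b_{k_3}$ for $k\geq 2$ and $b_1 = a_1$, summing over $k\geq 1$ yields
\begin{equation*}
G(x) = a_1 x + C\, G(x)^3.
\end{equation*}
This is a cubic in $G$ whose relevant branch is the one with $G(0)=0$; its power series coefficients $b_k$ can be extracted via the Lagrange inversion formula. Writing $G = a_1 x H(C a_1^2 x^2 G^{\,?})$ after an appropriate substitution, or more directly applying Lagrange inversion to $x = \varphi(G)$ with $\varphi(w) = \frac{w - C w^3}{a_1}$, one obtains an explicit formula for $b_k$ as (a constant times) $C^{(k-1)/2} a_1^{?}$ times a binomial-type coefficient; only odd $k$ contribute, consistent with $a_k \equiv 0$ for even $k$ in the application. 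The coefficient that appears is, up to normalization, the number of ternary trees with a given number of nodes, i.e.\ a Fuss–Catalan number $\frac{1}{2m+1}\binom{3m}{m}$.

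The remaining step, and the one requiring the most care, is the elementary but slightly delicate estimate on the Fuss–Catalan numbers: one must show $\frac{1}{2m+1}\binom{3m}{m} \leq \left(\frac{27}{4}\right)^{m}$ (or the sharper $\leq \frac{6}{\pi^2}\left(\frac{27}{4}\right)^{m}$ after accounting for the $\frac{1}{2m+1}$ factor and Stirling bounds), and then repackage the powers of $C$, $a_1$, and $\frac{27}{4}$ so that the exponent of the combined constant is $k-1$ rather than $m$. Tracking $k = 2m+1$ and collecting $C^{m}\cdot(27/4)^{m} = \big(\tfrac{27C}{4}\big)^{m} = \big(\tfrac{27C}{4}\big)^{(k-1)/2} = \big(\tfrac{9C\cdot 3}{4}\big)^{(k-1)/2}$, one sees that $C_4^{k-1}$ with $C_4 = \sqrt{9C}\cdot\sqrt{3/4}\cdot(\text{Stirling constant})$; the factor $\frac{\pi^2}{6}$ is precisely what comes out of bounding $\frac{1}{2m+1}$ against the tail-summable constant from $\sum 1/m^2$, giving $C_4 = \frac{\pi^2}{6}\sqrt{9C}$. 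The main obstacle is thus not conceptual but bookkeeping: making the constant come out to exactly $\frac{\pi^2}{6}(9C)^{1/2}$ and not something merely comparable, which forces one to use the precise value $\sum_{m\geq 1} m^{-2} = \frac{\pi^2}{6}$ and a clean form of Stirling's inequality rather than crude estimates. An alternative, perhaps cleaner, route avoiding generating functions entirely is a direct induction: assume $a_k \leq C_4^{k-1}a_1^k$ for all indices below $k$, substitute into the recursion to get $a_k \leq C\, C_4^{k-3} a_1^k \cdot \#\{(k_1,k_2,k_3): k_i\geq 1, \sum k_i = k\}$, and then observe that the number of such triples is $\binom{k-1}{2} \leq \frac{k^2}{2}$, so that closing the induction reduces to $C\cdot \frac{k^2}{2} \leq C_4^2 = \frac{\pi^4}{36}\cdot 9C = \frac{\pi^4 C}{4}$, i.e.\ $k^2 \leq \frac{\pi^4}{2}$ — which fails for large $k$, showing that the naive induction is too lossy and that one genuinely needs the generating-function / Fuss–Catalan input to capture the cancellation in the convolution. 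I would therefore proceed via the generating function as outlined.
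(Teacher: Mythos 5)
Your main line of argument is correct and would prove the lemma. The comparison $a_k\le b_k$ is justified exactly as you say (each $k_i\le k-2<k$, so the majorization closes by induction); Lagrange inversion applied to $G=x\cdot\frac{a_1}{1-CG^2}$ gives $b_{2m+1}=\frac{1}{2m+1}\binom{3m}{m}C^m a_1^{2m+1}$ and $b_k=0$ for even $k$; and the elementary bound $\binom{3m}{m}\le\frac{(3m)^{3m}}{m^m(2m)^{2m}}=(27/4)^m$ yields $a_k\le(27C/4)^{(k-1)/2}a_1^k$, which implies the claim a fortiori since $\sqrt{27/4}=\tfrac{3\sqrt{3}}{2}\le 3\cdot\tfrac{\pi^2}{6}$. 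In particular your bookkeeping worry is moot: in your route the factor $\pi^2/6$ never appears and need not be manufactured, because you land on a strictly better constant. This is, however, a genuinely different route from the paper's, which defers to Kishimoto's \emph{inductive} proof, and your closing assertion that ``the naive induction is too lossy and one genuinely needs the generating-function input'' is mistaken. The induction is rescued by the standard device of strengthening the hypothesis with a weight: one proves $a_j\le\frac{6}{\pi^2\sqrt{9C}}\,\frac{\rho^j}{j^2}$ for $\rho:=C_4a_1$, and the induction step then rests on the weighted convolution estimate
\begin{equation*}
\sum_{\substack{k_1,k_2,k_3\ge1\\ k_1+k_2+k_3=k}}\frac{1}{k_1^2k_2^2k_3^2}\le\frac{9}{k^2}\Big(\frac{\pi^2}{6}\Big)^2,
\end{equation*}
which follows from $k^2\le3(k_1^2+k_2^2+k_3^2)$ together with $\sum_{j\ge1}j^{-2}=\pi^2/6$. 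The factor $k^{-2}$ gained here exactly cancels the $k^2$ loss you identified when counting triples, and the constants close the induction on the nose, producing precisely $C_4=\frac{\pi^2}{6}(9C)^{1/2}$ --- this is where the $\pi^2/6$ actually comes from, not from bounding $\frac{1}{2m+1}$. In summary: your approach buys an exact formula for the majorant and a sharper constant; the paper's buys a short, self-contained induction.
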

The proof of Lemma \ref{LEM:ak}
is elementary (by induction) and details can be found in \cite{Kishimoto1}.
In particular, by Lemma \ref{LEM:ak}, it follows that the sequence $\{a_k\}_{k\in\mathbb N}$ from 
Lemma \ref{LEM:UkMA} satisfies $a_k\leq C_4^{k-1}$.

In the proof of Theorem \ref{THM:main} (iv)
we will work with an initial datum $\phi$ such that
\begin{equation}\label{phi}
\ft \phi (\xi):=R\left(\pmb{1}_{N+I_A}(\xi)+\pmb{1}_{2N+I_A}(\xi)\right) \text{ for all } \xi\in\R,
\end{equation}
where $N\gg1$, $1\ll A\ll N$, $R>0$ will be chosen later.
In other words, $\hat \phi$ is supported on two relatively small intervals centered at high frequencies $N, 2N \gg 1$.
It is useful to have 
the following estimate on the measure of the support of the Fourier transform of
$U_k[\phi]$.
\begin{lemma}
Let $A\geq 1$ and define $\phi$ as in \eqref{phi}. 
Then, there exists an absolute constant $C>0$
such that
\begin{equation}\label{supp}
|{\rm supp}\, \ft{U_k[\phi](t)}|\leq C^kA
\end{equation}
for any $k\geq 1$ and $t\geq 0$. 
In particular, the bound in \eqref{supp} is independent of $N$. 
\end{lemma}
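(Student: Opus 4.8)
The plan is to prove \eqref{supp} by induction on $k$, tracking how the Fourier support grows under the trilinear Duhamel operator that defines $U_k[\phi]$ in \eqref{Uk}. The base case $k=1$ is immediate: $\ft{U_1[\phi](t)}(\xi)=e^{-it|\xi|}\ft\phi(\xi)$, so $\supp\ft{U_1[\phi](t)}=\supp\ft\phi\subset (N+I_A)\cup(2N+I_A)$, which has measure $2A\leq C A$ for a suitable absolute constant. The key structural observation is that multiplication in physical space corresponds to convolution on the Fourier side, and the operator $e^{-i(t-\tau)|D|}$ does not enlarge supports; hence $\supp\ft{U_{k_1}[\phi]\cj{U_{k_2}[\phi]}U_{k_3}[\phi]}\subset \supp\ft{U_{k_1}[\phi]}+\bigl(-\supp\ft{U_{k_2}[\phi]}\bigr)+\supp\ft{U_{k_3}[\phi]}$, using that the Fourier transform of $\cj g$ is $\xi\mapsto\cj{\ft g(-\xi)}$.

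The induction step then runs as follows. Assume $|\supp\ft{U_j[\phi](\tau)}|\leq C^j A$ for all $1\leq j\leq k-1$ and all $\tau\geq 0$. For any admissible triple $k_1+k_2+k_3=k$ with $k_j\geq 1$, the sumset appearing above is contained in a set whose measure is at most $C^{k_1}A+C^{k_2}A+C^{k_3}A$ — here I use the elementary fact that a sumset $E_1+E_2+E_3$ of measurable sets satisfies $|E_1+E_2+E_3|\leq |E_1|+|E_2|+|E_3|$ whenever the $E_i$ are contained in intervals (which is the case here, since each $\supp\ft{U_j[\phi](\tau)}$ sits inside a bounded interval; in fact one can argue even more simply using that each support is contained in a single interval of length $\leq C^j A$, and the sum of three intervals is an interval of length the sum of the three lengths). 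Since $C^{k_1}+C^{k_2}+C^{k_3}\leq 3C^{k-2}\leq C^{k}$ for $C$ chosen with $C\geq 3$, we get $|\supp\ft{(U_{k_1}\cj{U_{k_2}}U_{k_3})(\tau)}|\leq C^k A$. The Fourier support of $U_k[\phi](t)$ is contained in the union over the finitely many admissible triples of these sets — but crucially all of them lie in the \emph{same} interval: one checks that every frequency produced is of the form (sum of $k$ frequencies each near $N$ or $2N$), so it lies in an interval of length $O(C^k A)$ centered near an integer multiple of $N$ between $kN$ and $2kN$. Taking the union over triples does not spoil the bound because one can arrange, by a slightly more careful bookkeeping, that the whole support sits inside a single interval of controlled length; alternatively, absorb the (bounded, $k$-dependent) number of triples into the constant by enlarging $C$.

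The main obstacle is the bookkeeping in the last point: a naive union bound over the $\sim k^2$ admissible triples would introduce a polynomial-in-$k$ factor, which is harmless ($k^2\leq C^k$ after enlarging $C$), but to keep the clean geometric bound $C^k A$ one should instead observe directly that $\supp\ft{U_k[\phi](t)}$ is contained in a single interval. Indeed, by induction $\supp\ft{U_j[\phi](\tau)}$ is contained in an interval $I_j$ of length $\leq C^j A$; then $\supp\ft{U_k[\phi](t)}\subset\bigcup_{k_1+k_2+k_3=k}(I_{k_1}-I_{k_2}+I_{k_3})$, and since each $I_{k_i}$ is near a known location (a multiple of $N$ of size comparable to $k_iN$), the centers of the intervals $I_{k_1}-I_{k_2}+I_{k_3}$ vary over a range of size $O(A)$ (the center drift is controlled by the original $A$-scale offsets, not by $N$), so the union is contained in an interval of length $O(C^{k-2}A + kA)\leq C^kA$ for $C$ large enough, uniformly in $N$ and $t$. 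This also makes transparent the claimed $N$-independence: $N$ only shifts the location of the support, never its size.
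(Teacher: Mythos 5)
There is a genuine gap, and it is located exactly where you placed your ``more careful bookkeeping.'' Your claim that each $\supp\ft{U_j[\phi](\tau)}$ is contained in a \emph{single} interval of length $\leq C^jA$ fails already at the base case: $\supp\ft\phi=(N+I_A)\cup(2N+I_A)$ has measure $2A$ but diameter of order $N\gg A$, so it is two intervals separated by a gap of order $N$. Consequently your assertion that the centers of the sets $I_{k_1}-I_{k_2}+I_{k_3}$ ``vary over a range of size $O(A)$'' is false: for $k=3$ the output frequencies $\xi_1-\xi_2+\xi_3$ with each $\xi_i$ near $N$ or $2N$ cluster around the four values $0,N,2N,3N$, a spread of order $kN$, not $O(A)$. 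Relatedly, the ``elementary fact'' $|E_1+E_2+E_3|\leq|E_1|+|E_2|+|E_3|$ is false for sets that are merely \emph{contained in} intervals (every bounded set is); it holds when the $E_i$ \emph{are} intervals. For unions of intervals it fails badly: with $E=\supp\ft\phi$ as above, $E-E$ is (essentially) three intervals of length $2A$ near $-N,0,N$, so $|E-E|\approx 6A>2|E|$. In general the measure of a sumset of unions of intervals can grow like the \emph{product} of the numbers of component intervals, which is precisely the quantity your argument does not track.

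The fix --- and the paper's actual argument --- is to carry through the induction a pair of quantities rather than a single length: $\ft{U_k[\phi](t)}$ is supported in a union of at most $2^k$ intervals (one for each choice of which of the two frequency blocks $N+I_A$, $2N+I_A$ each of the $k$ factors of $\phi$ contributes), each individual interval having length at most $kA$ (lengths add under convolution, and there are $k$ factors each contributing length $A$). The measure bound is then $2^k\cdot kA\leq C^kA$, and the $N$-independence is transparent because $N$ only determines the (widely spread) centers of these intervals, never their lengths or their number. Your correct observations --- that $e^{-i(t-\tau)|D|}$ does not enlarge supports, that products become convolutions, and that the $O(k^2)$ union over triples costs only a factor absorbable into $C^k$ --- all survive; what must be replaced is the single-interval containment by the count-times-length bookkeeping.
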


\begin{proof}
The proof is essentially the same as that of an analogous result in \cite{Kishimoto1}.
Therefore, we only sketch it here and refer the readers to
\cite{Kishimoto1} for details. 
For $k$ even, \eqref{supp} is trivial since, as noticed earlier, $U_k[\phi]\equiv 0$.
For $k=1$, \eqref{supp} follows easily from the fact that $\ft\phi$ is supported on two intervals of size $A$
centered at $N$ and $2N$ respectively. 
For $k=3$, we notice that $\ft{U_3[\phi]}$ is supported on intervals centered at
$q=q_1-q_2+q_3$ with $q_1,q_2,q_3\in \{N,2N\}$ of size at most $3A$. Therefore,
\begin{equation}
|{\rm supp}\, \ft{U_3[\phi](t)}|\leq 2^3\cdot 3A. 
\end{equation}
Arguing by induction, it follows that $\ft{U_k[\phi](t)}$
is supported on at most $2^k$ intervals centered at integers, each of size at most $kA$ and thus,
$|{\rm supp}\, \ft{U_k[\phi](t)}|\leq 2^k\cdot kA$ for all $k\geq 3$.
\end{proof}

We are now ready to state and prove the norm inflation property
of \eqref{HW} in $H^s(\R)$, $s<0$, that we recall here for convenience.
\begin{proposition}[Norm inflation property for NHW in $H^s(\R)$, $s<0$]\label{PROP:inflation}
Let $s<0$. Then, given $0<\eps\ll 1$, there exist $\phi\in H^\infty (\R)$ with $\|\phi\|_{H^s}<\eps$
and $0<T<\eps$ such that the solution $u$ of \eqref{HW} with initial condition $u(0)=\phi$
satisfies $\|u(T)\|_{H^s}>\frac 1\eps$.
\end{proposition}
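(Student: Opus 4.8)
The plan is to follow the Iwabuchi--Ogawa/Kishimoto scheme by exploiting the power series expansion $u=\sum_{k\ge1}U_k[\phi]$ from Lemma~\ref{LWP}, showing that the cubic term $U_3[\phi]$ dominates the other terms and is itself very large in $H^s$ at a suitable short time. First I would fix the initial datum $\phi$ as in \eqref{phi}, so that $\ft\phi$ is supported on $(N+I_A)\cup(2N+I_A)$ with height $R$, and record the basic facts: $\|\phi\|_{M_A}\sim R A^{1/2}$ and, since the support sits at frequencies $\sim N$, $\|\phi\|_{H^s}\sim R A^{1/2} N^s$. The three scale parameters $N\gg1$, $1\ll A\ll N$, and $R>0$ will be tuned at the end. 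Next I would extract the ``high-to-low cascade'': in $U_3[\phi]$ the frequency interaction $q_1-q_2+q_3$ with $(q_1,q_2,q_3)=(N,2N,N)$ or $(2N,N,2N)$ lands near frequency $0$ (resp.\ $3N$), and because NHW has the exact phase $-|\xi|$ which is \emph{linear} on each sign, this resonant output has \emph{no oscillation} on $[0,t]$, so the $\tau$-integral contributes a full factor $t$ rather than decaying. Hence one gets a lower bound of the form $\|U_3[\phi](t)\|_{H^s}\gtrsim t\,R^3 A\cdot\langle\text{low freq}\rangle^s\gtrsim t R^3 A$ (using $s<0$ so that low frequencies are weighted $\gtrsim1$), by computing the $L^2$ (equivalently $M_A$) mass of the piece of $\widehat{U_3}$ supported near $\xi\approx0$.

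The core estimates then split into three parts. (a) Lower bound for $U_3$: I would isolate the resonant contribution as above and show $\|U_3[\phi](t)\|_{H^s}\gtrsim t R^3 A$ for $0<t\le T$. (b) Upper bound for the tail $\sum_{k\ge5}U_k[\phi]$: using Lemma~\ref{LEM:UkMA}, Lemma~\ref{LEM:ak} (so $a_k\le C_4^{k-1}$), the support bound \eqref{supp}, and the embedding $\|U_k[\phi](t)\|_{H^s}\lesssim |{\rm supp}\,\ft{U_k[\phi](t)}|^{(-s)\vee0}\cdots$ — more precisely $\|g\|_{H^s}\le\|g\|_{L^2}$ for $s\le0$, hence $\|U_k[\phi](t)\|_{H^s}\le\|U_k[\phi](t)\|_{M_A}\le C_4^{k-1}t^{(k-1)/2}(C_2A^{1/2}\|\phi\|_{M_A})^{k-1}\|\phi\|_{M_A}$ — I would sum the geometric series to get the tail bounded by $C t^2 (A^{1/2}\|\phi\|_{M_A})^4\|\phi\|_{M_A}\sim t^2 R^5 A^{7/2}$ provided $T^{1/2}A^{1/2}\|\phi\|_{M_A}\le\tfrac12$, i.e.\ $T\lesssim (RA)^{-2}$. (c) Upper bound for $U_1$: this is just $\|U_1[\phi](t)\|_{H^s}=\|\phi\|_{H^s}\sim RA^{1/2}N^s$, which is tiny because $N\gg1$ and $s<0$. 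Since $U_k\equiv0$ for even $k$, combining (a)--(c) gives, for $0<t\le T$ with $T\sim (RA)^{-2}$ (well within the local existence time $T_\ast\sim A^{-1}\|\phi\|_{M_A}^{-2}\sim (RA)^{-2}$ from Lemma~\ref{LWP}),
\begin{equation*}
\|u(T)\|_{H^s}\gtrsim \|U_3[\phi](T)\|_{H^s}-\|U_1[\phi](T)\|_{H^s}-\sum_{k\ge5}\|U_k[\phi](T)\|_{H^s}\gtrsim TR^3A - RA^{1/2}N^s - T^2R^5A^{7/2}.
\end{equation*}

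It remains to choose the parameters. Taking $T=(RA)^{-2}$, the $U_3$ term is $\gtrsim R A^{-1}$ while the tail is $\gtrsim R A^{-1/2}$ — wait, so the tail would dominate; hence $T$ must be taken a small constant multiple of $(RA)^{-2}$, or better, I set $T=\theta (RA)^{-2}$ with $\theta\ll1$ small and absolute, making the tail $\le \tfrac12$ of the main term, so $\|u(T)\|_{H^s}\gtrsim TR^3A\sim \theta RA^{-1}$. To finish I need simultaneously $\|\phi\|_{H^s}\sim RA^{1/2}N^s<\eps$, $T<\eps$, and $TR^3A\gtrsim RA^{-1}>1/\eps$. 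I would pick $R=R(N)$ and $A=A(N)$ as powers of $N$: e.g.\ let $A=N^{\gamma}$ and $R=N^{\rho}$ with $\gamma,\rho$ chosen so that $\rho-\gamma$ is a fixed \emph{positive} number (forcing $RA^{-1}\to\infty$, hence the final norm $\to\infty$), while $\rho+\tfrac\gamma2+s(\,\cdot\,)$... — more directly, since $s<0$, for any target size I can drive $RA^{1/2}N^s\to0$ by taking $N\to\infty$ with $A,R$ fixed \emph{relative to} the growth of $N^{-s}$; concretely choose $A$ a large constant (or a tiny power of $N$) and $R$ a small power of $N$ with $R\ll N^{-s}A^{-1/2}$ yet $R\gg A$ — possible precisely because $-s>0$ gives room between the exponents — and then $N\to\infty$ sends $\|\phi\|_{H^s}\to0$, $T\to0$, and $\|u(T)\|_{H^s}\to\infty$, yielding the claim after relabeling $\eps$. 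The main obstacle, and the step deserving the most care, is part (a): making the resonant lower bound for $U_3$ rigorous, i.e.\ verifying that the near-zero-frequency output of $\widehat{U_3[\phi](t)}$ genuinely has $L^2$-mass $\gtrsim tR^3A$ with no cancellation — this uses that the phase is exactly linear (so the resonant set contributes coherently) and that the two frequency blocks at $N$ and $2N$ are separated enough that the $(N,2N,N)$ interaction is isolated from the non-resonant interactions, which requires the quantitative relation $A\ll N$ together with a careful bookkeeping of which triples $(q_1,q_2,q_3)\in\{N,2N\}^3$ produce output near a given frequency.
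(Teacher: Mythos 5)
Your overall strategy is exactly the paper's: the same two-bump datum \eqref{phi}, the same power-series decomposition from Lemma~\ref{LWP}, and the same mechanism (the resonant $(N,2N,N)$ interaction in $U_3[\phi]$ producing an unoscillating output near frequency zero). However, the quantitative part of your argument does not close as written, for three concrete reasons. First, your lower bound $\|U_3[\phi](t)\|_{H^s}\gtrsim tR^3A$ is asserted rather than derived, and the exponent of $A$ does not match any natural computation: the double $d\xi_1\,d\xi_3$ integral over two blocks of length $\sim A$ makes $|\ft{U_3[\phi](t)}(\xi)|\gtrsim tR^3A^2$ \emph{pointwise} on $[0,\frac A8)$, so the correct bound is $\|U_3[\phi](t)\|_{H^s}\gtrsim tR^3A^2\,\|\jb{\xi}^s\pmb{1}_{[0,A/8)}\|_{L^2}= tR^3A^2g(A)$ as in Lemma~\ref{LEM:U3}. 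Second, your tail estimate has an arithmetic slip: $(A^{1/2}\|\phi\|_{M_A})^4\|\phi\|_{M_A}\sim (RA)^4\cdot RA^{1/2}=R^5A^{9/2}$, not $R^5A^{7/2}$. Third, and most importantly, with your $L^2$-based tail bound the ratio (tail)/(main term) at $T=\theta(RA)^{-2}$ is $\theta^2RA^{1/2}\big/\theta RA^{-1}=\theta A^{3/2}$, so an \emph{absolute} small constant $\theta$ does not make the tail half of the main term once $A\to\infty$; you noticed the tension but resolved it incorrectly. Your route can be repaired --- either by taking $A$ bounded (in which case $\theta\ll A^{-3/2}$ is again absolute and your parameter count goes through for every $s<0$), or by taking $\theta\sim A^{-3/2}$ and then demanding $R\gg A^{5/2}$ together with $RA^{1/2}N^s\ll1$, which is compatible since $s<0$ --- but neither repair is actually carried out, and your closing paragraph on the choice of $R,A$ is internally inconsistent (e.g.\ "$R\gg A$" is calibrated to the erroneous $A^{7/2}$).

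The paper sidesteps this entire difficulty by proving a sharper $H^s$ (not $L^2$) upper bound for $U_k[\phi]$: H\"older against $\jb{\xi}^s\pmb{1}_{{\rm supp}\,\ft{U_k}}$ combined with the support-measure bound \eqref{supp} yields $\|U_k[\phi](t)\|_{H^s}\le t^{(k-1)/2}(CRA)^{k-1}Rg(A)$, i.e.\ the \emph{same} factor $g(A)$ that appears in the $U_3$ lower bound. The tail-versus-main comparison then reduces to the clean condition $TR^2A^2\ll1$, with no loss of powers of $A$, and the parameter selection becomes the transparent system \eqref{12bis}--\eqref{geomHW}. If you keep your cruder $\|\cdot\|_{H^s}\le\|\cdot\|_{L^2}\le\|\cdot\|_{M_A}$ chain, you must compensate with the extra constraints above; I would recommend instead importing the paper's Hölder-plus-support argument for the upper bounds.
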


As already mentioned above, we choose the initial datum $\phi$ as in \eqref{phi}.
Note that $\|\phi\|_{M_A}=CRA^{\frac 12}$ and $\|\phi\|_{H^s}=C'RA^{\frac 12} N^s$.
The strategy of the proof of 
Proposition \ref{PROP:inflation}
is to expand the solution $u$ into the series of $U_k[\phi]$ as in \eqref{expansion},
and to show that the term $U_3[\phi]$ is much bigger
than all the other terms in the series. 
The conclusion then follows by choosing $R$, $T$, and $A$ 
conveniently in terms of $N$, such that $\|\phi\|_{H^s}\ll 1$, while $\|U_3[\phi]\|_{H^s}\gg 1$
for a fixed $s<0$.

The proof of 
Proposition \ref{PROP:inflation}
is based on the following two main lemmas.
The fist lemma gives an upper bound on the $H^s$-norm
of $U_k[\phi]$ for $k\in\mathbb N$ and $s\leq 0$.
\begin{lemma}
Let $s\leq 0$. Then there exists $C>0$ such that the following hold:
\begin{equation}\label{1}
\|U_1[\phi](t)\|_{H^s}\leq CRA^{\frac 12}N^s
\end{equation}
and
\begin{align}\label{2}
\|U_k[\phi](t)\|_{H^s}\leq t^{\frac{k-1}{2}}(CRA)^{k-1}R g(A),
\end{align}
for all $k\geq 2$ and all $t\geq 0$, where
\begin{align}\label{2bis}
g(A):=
\begin{cases}
A^{s+\frac 12}, & \text{if } -\frac 12<s\leq 0,\\
(\log A)^{\frac 12}, & \text{if } s=-\frac 12,\\
1, & \text{if } s<-\frac 12.
\end{cases}
\end{align}

\end{lemma}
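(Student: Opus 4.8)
The plan is to prove \eqref{1} by a direct computation and to reduce \eqref{2} to an $L^\infty$-in-frequency bound for $U_k[\phi]$. For \eqref{1}, since $U_1[\phi](t)=e^{-it|D|}\phi$ and $e^{-it|D|}$ is a Fourier multiplier of modulus one, it preserves every $H^s$-norm, so $\|U_1[\phi](t)\|_{H^s}=\|\phi\|_{H^s}$. Because $\ft\phi$ is supported in $(N+I_A)\cup(2N+I_A)$, on which $\jb{\xi}\sim N$ (using $A\ll N$) and $\jb{\xi}^{2s}\le 1$ for $s\le 0$, one gets $\|\phi\|_{H^s}^2\lesssim R^2 N^{2s}\cdot 2A$, which is \eqref{1}.

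For \eqref{2} we may assume $k$ odd with $k\ge 3$, since $U_k[\phi]\equiv 0$ for even $k$. The difficulty is the ``high-to-low cascade'' in \eqref{Uk}: it places frequency content of $U_k[\phi]$ near $\xi=0$, where $\jb{\xi}^{2s}\sim 1$ offers no gain. I would combine two facts: the support bound \eqref{supp}, namely $|\supp\ft{U_k[\phi](t)}|\le C^kA$, and an $L^\infty$ bound $\|\ft{U_k[\phi](t)}\|_{L^\infty}\le c_k\,t^{(k-1)/2}(CAR)^{k-1}R$ with $c_k\le C^k$ (established below). Granting these, bound $\|U_k[\phi](t)\|_{H^s}^2\le\frac1{2\pi}\|\ft{U_k}\|_{L^\infty}^2\int_{\supp\ft{U_k}}\jb{\xi}^{2s}\,d\xi$ and estimate $\int_{\supp\ft{U_k}}\jb{\xi}^{2s}\,d\xi$ by the bathtub principle: since $\jb{\xi}^{2s}$ is non-increasing in $|\xi|$ for $s\le0$, this integral is at most $\int_{|\xi|\le C^kA/2}\jb{\xi}^{2s}\,d\xi$, which is $\lesssim(C^kA)^{2s+1}$, $\lesssim\log(C^kA)$, or $\lesssim 1$ according as $s>-\tfrac12$, $s=-\tfrac12$, or $s<-\tfrac12$. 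Taking square roots, this is $\lesssim C^k g(A)$ with $g$ as in \eqref{2bis}. Hence $\|U_k[\phi](t)\|_{H^s}\lesssim C^k c_k\,t^{(k-1)/2}(CAR)^{k-1}R\,g(A)$; since $C^kc_k\le C^{2k}$ and $k\ge 3$, the absolute factor absorbs into a slightly larger base of the geometric factor $(CAR)^{k-1}$, and for $k=2$ the inequality is trivial because $U_2\equiv 0$. This gives \eqref{2}.

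It remains to establish the $L^\infty$ bound by induction on $k$, the case $k=1$ being clear with $c_1=1$. For the step, apply $\|\cdot\|_{L^\infty}$ to \eqref{Uk} and use the elementary convolution estimate $\|\ft{U_{k_1}\cj{U_{k_2}}U_{k_3}}\|_{L^\infty}\le(2\pi)^{-2}\|\ft{U_{k_1}}\|_{L^\infty}\|\ft{U_{k_2}}\|_{L^1}\|\ft{U_{k_3}}\|_{L^1}$. I would control the two $L^1$ factors by Cauchy--Schwarz on the support, Plancherel, and Lemma \ref{algebra}(i): $\|\ft{U_{k_j}}\|_{L^1}\le|\supp\ft{U_{k_j}}|^{1/2}\|\ft{U_{k_j}}\|_{L^2}\lesssim C^{k_j}A^{1/2}\|U_{k_j}\|_{M_A}$, using \eqref{supp}; then insert the $M_A$-estimate of Lemma \ref{LEM:UkMA} with $a_{k_j}\le C_4^{k_j-1}$ (Lemma \ref{LEM:ak}) and $\|\phi\|_{M_A}=CRA^{1/2}$ to obtain $\|\ft{U_{k_j}}\|_{L^1}\lesssim C^{k_j}\tau^{(k_j-1)/2}(CAR)^{k_j-1}RA$. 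Multiplying the three factors, integrating $\tau^{(k-3)/2}$ over $[0,t]$ (which gives $\tfrac{2}{k-1}t^{(k-1)/2}$), and summing over the admissible triples yields a recursion of the shape $c_k\le\frac{C'}{k-1}\sum_{k_1+k_2+k_3=k}c_{k_1}C^{k_2+k_3}$. Unlike the genuinely cubic recursion for the $a_k$'s, this one is linear in $c$: substituting $b_k:=c_kC^{-2k}$ turns it into $b_k\le\frac{C''}{k-1}\sum_{k_1+k_2+k_3=k}b_{k_1}\le C''\sum_{j<k}b_j$, so $b_k$, hence $c_k$, grows at most geometrically and $c_k\le C^k$ after enlarging $C$. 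This closes the induction.

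The hard part is not any individual estimate but the bookkeeping: verifying that the stray powers of $C$, $A$, $R$ produced along the way assemble exactly into $t^{(k-1)/2}(CAR)^{k-1}R$, and confirming that the recursion for $c_k$ is genuinely linear — so that, unlike in Lemma \ref{LEM:ak}, no $\sum n^{-2}$ refinement is needed here. The rearrangement step and the resulting three-way case split for $g(A)$ are then routine.
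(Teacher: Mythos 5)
Your proposal is correct, and its skeleton is the same as the paper's: bound $\|U_k[\phi](t)\|_{H^s}$ by $\|\jb{\xi}^s\pmb{1}_{\supp\ft{U_k}}\|_{L^2}\,\|\ft{U_k[\phi](t)}\|_{L^\infty}$, control the first factor via the support bound \eqref{supp} and the three-way case split giving $g(A)$, and feed the $M_A$-estimates of Lemma \ref{LEM:UkMA} (with $a_k\le C_4^{k-1}$ from Lemma \ref{LEM:ak}) into the second. The one place you diverge is in how $\|\ft{U_k}\|_{L^\infty}$ is estimated. The paper splits the triple convolution as $L^2\times L^2\times L^1\to L^\infty$, bounds the $L^1$ factor by Cauchy--Schwarz against its support, and then controls \emph{all three} $L^2$ norms directly by Lemma \ref{LEM:UkMA}; the sum over $k_1+k_2+k_3=k$ then reassembles into $\tfrac{2}{k-1}\sum a_{k_1}a_{k_2}a_{k_3}=a_k$, so no new induction is needed. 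You instead split as $L^\infty\times L^1\times L^1$, which forces a separate induction on the constants $c_k$ in the $L^\infty$ bound; your observation that the resulting recursion is linear in $c$ (hence closes with geometric growth, no $\sum n^{-2}$ device needed) is accurate, but the whole induction is avoidable by the paper's choice of exponents. Both routes land on the same estimate with the same bookkeeping of powers of $t$, $C$, $R$, $A$.
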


\begin{proof}
The first estimate \eqref{1} is trivial, so we concentrate on \eqref{2}. 
In what follows, $C>0$ 
denotes a generic constant (possibly increasing from line to line).
By H\"older's and Young's inequalities, we have that
\begin{align}\label{a}
&\|U_k[\phi](t)\|_{H^s}\leq \|\jb{\xi}^s\pmb{1}_{{\rm supp}\, \ft{U_k[\phi](t)}}\|_{L^2}\|\ft{U_k[\phi](t)}\|_{L^\infty}\\
&\leq C\|\jb{\xi}^s\pmb{1}_{{\rm supp}\, \ft{U_k[\phi](t)}}\|_{L^2} 
\sum_{\substack{k_1,k_2, k_3\geq 1\\ k_1+k_2+k_3=k}}
\int_0^t \|\ft{U_{k_1}[\phi]}\|_{L^2}\|\ft{U_{k_2}[\phi]}\|_{L^2}\|\ft{U_{k_3}[\phi]}\|_{L^2}|{\rm supp}\, \ft{U_{k_3}[\phi](\tau)}|^{\frac 12}d\tau.\notag
\end{align}
By \eqref{supp}, we first notice that
\begin{align}\label{b}
\|\jb{\xi}^s\pmb{1}_{{\rm supp}\, \ft{U_k[\phi](t)}}\|_{L^2}
\lesssim
\begin{cases}
(C^kA)^{s+\frac 12}, & \text{if } \quad -\frac 12<s\leq 0,\\
(\log (C^kA))^{\frac 12}, & \text{if } \quad s=-\frac 12, \\
1, & \text{if } s<-\frac 12.
\end{cases}
\end{align}
Note that $(\log (C^kA))^{\frac 12}=(k\log C+\log A)^{\frac 12}\leq (C')^{k-1}(\log A)^{\frac 12}$.
Secondly, since $k_1,k_2\geq 1$ and $k_3\leq k-2$, we have by \eqref{supp} that
\begin{equation*}
|{\rm supp}\, \ft{U_{k_3}[\phi](\tau)}|\leq C^{k-2}A.
\end{equation*}
Thirdly, by Lemma \ref{algebra}, \eqref{UkMA}, and $\|\phi\|_{M_A}\leq CRA^{\frac 12}$, it follows that
\begin{align*}
\|U_{k_j}[\phi](\tau)\|_{L^2}&\leq C\|U_{k_j}[\phi](\tau)\|_{M_A}
\leq Ca_{k_j}t^{\frac{k_j-1}{2}}(C_2 A^{\frac 12}\|\phi\|_{M_A})^{k_j-1}\|\phi\|_{M_A}\\
&\leq Ca_{k_j}t^{\frac{k_j-1}{2}}(CRA)^{k_j-1}RA^{\frac 12}.
\end{align*}
Combining the last two estimates and using $a_k\leq C_4^{k-1}$ from Lemma \ref{LEM:ak}, we obtain that
\begin{align}\label{c}
\sum_{\substack{k_1,k_2, k_3\geq 1\\ k_1+k_2+k_3=k}}
&\int_0^t \|\ft{U_{k_1}[\phi]}\|_{L^2}\|\ft{U_{k_2}[\phi]}\|_{L^2}\|\ft{U_{k_3}[\phi]}\|_{L^2}|{\rm supp}\, \ft{U_{k_3}[\phi](\tau)}|^{\frac 12}d\tau\notag \\
&\leq (C^{k-2}A)^{\frac 12} \frac{2}{k-1}\sum_{\substack{k_1,k_2, k_3\geq 1\\ k_1+k_2+k_3=k}}a_{k_1}a_{k_2}a_{k_3} t^{\frac{k-1}{2}}C^3(CRA)^{k-3}(RA^{\frac 12})^3\notag \\
&\leq a_k t^{\frac{k-1}{2}} (CRA)^{k-1}R \leq t^{\frac{k-1}{2}} (CRA)^{k-1}R.
\end{align}
The conclusion then follows from \eqref{a}, \eqref{b}, and \eqref{c}.
\end{proof}
Next, we prove a lower bound for the $H^s$-norm of $U_3[\phi]$ for $s\leq 0$.
\begin{lemma}\label{LEM:U3}
Let $s\leq 0$. Then, there exists $C>0$ such that for all $t>0$, the following holds
\begin{align}\label{3}
\|U_3[\phi](t)\|_{H^s}\geq CtR^3A^2 g(A),
\end{align}
where $g(A)$ was defined in \eqref{2bis}.
\end{lemma}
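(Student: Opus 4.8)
The plan is to compute $\ft{U_3[\phi](t)}(\xi)$ almost explicitly and then read off its size near $\xi=0$. First I would observe that in the sum \eqref{Uk} defining $U_3[\phi]$ the only admissible triple with $k_1+k_2+k_3=3$ and each $k_j\geq 1$ is $(1,1,1)$, so that
\[U_3[\phi](t)=-i\mu\int_0^te^{-i(t-\tau)|D|}\big(|e^{-i\tau|D|}\phi|^2\,e^{-i\tau|D|}\phi\big)\,d\tau.\]
On the Fourier side, writing the cubic term as a convolution over the hyperplane $\{\eta_1-\eta_2+\eta_3=\xi\}$, I would use that $\ft\phi\geq 0$ is supported in the two intervals $N+I_A$ and $2N+I_A$, which lie in $(0,\infty)$ since $1\ll A\ll N$. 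Hence $|\eta_j|=\eta_j$ on $\supp\ft\phi$, and the three linear evolutions contribute the phase $e^{-i\tau(\eta_1-\eta_2+\eta_3)}=e^{-i\tau\xi}$, which is independent of $\tau$ once restricted to the hyperplane. This yields the factorization
\[\ft{U_3[\phi](t)}(\xi)=-i\mu\,F(\xi)\int_0^te^{-i(t-\tau)|\xi|}e^{-i\tau\xi}\,d\tau,\qquad F(\xi):=\int_{\eta_1-\eta_2+\eta_3=\xi}\ft\phi(\eta_1)\,\ft\phi(\eta_2)\,\ft\phi(\eta_3),\]
where $F\geq 0$. For $\xi\geq 0$ one has $|\xi|-\xi=0$, so the time integral equals $t\,e^{-it|\xi|}$ and therefore $|\ft{U_3[\phi](t)}(\xi)|=t\,F(\xi)$ for every $\xi\geq 0$.

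\textbf{Lower bound on $F$.} Next I would show $F(\xi)\ges R^3A^2$ for $0\leq\xi\leq cA$, with $c>0$ a small absolute constant; this is the ``high-to-low frequency cascade''. Solving $\eta_2=\eta_1+\eta_3-\xi$ and restricting $\eta_1,\eta_3$ to the central halves of $N+I_A$ forces $\eta_2\in 2N+I_A$ as soon as $|\xi|$ is small relative to $A$ (here one uses $N-2N+N=0$); on the resulting set, of two-dimensional measure $\ges A^2$, all three factors of $\ft\phi$ equal $R$. Since $\ft\phi\geq 0$, the remaining contributions to $F(\xi)$ are nonnegative and may simply be discarded, yielding $F(\xi)\ges R^3A^2$ on $0\leq\xi\leq cA$.

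\textbf{Conclusion.} Combining the two steps, $|\ft{U_3[\phi](t)}(\xi)|\ges tR^3A^2$ on $0\leq\xi\leq cA$, whence
\[\|U_3[\phi](t)\|_{H^s}^2\ges \int_0^{cA}\jb{\xi}^{2s}\big(tR^3A^2\big)^2\,d\xi\ges t^2R^6A^4\int_0^{cA}\jb{\xi}^{2s}\,d\xi.\]
A direct evaluation shows $\int_0^{cA}\jb{\xi}^{2s}\,d\xi\sim A^{2s+1}$ for $-\tfrac12<s\leq0$, $\sim\log A$ for $s=-\tfrac12$, and $\sim 1$ for $s<-\tfrac12$; in all three cases $\int_0^{cA}\jb{\xi}^{2s}\,d\xi\ges g(A)^2$ with $g(A)$ as in \eqref{2bis}. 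Taking square roots gives \eqref{3}. I expect the one genuinely delicate point to be the lower bound on the convolution $F(\xi)$: one must pin down the resonant box contributing a definite amount, invoke the nonnegativity of $\ft\phi$ to rule out cancellation, and keep the window $0\leq\xi\leq cA$ simultaneously compatible with the near-resonance $N-2N+N\approx 0$ and with the identity $|\xi|-\xi=0$ that makes the time integral of size exactly $t$.
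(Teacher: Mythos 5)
Your proposal is correct and follows essentially the same route as the paper: restrict to small nonnegative output frequencies where the resonance function $|\xi|-|\xi_1|+|\xi_2|-|\xi_3|$ vanishes (so the time integral contributes a full factor of $t$), isolate the $(N,2N,N)$ interaction whose convolution has measure $\gtrsim A^2$ with all factors equal to $R$, and integrate $\jb{\xi}^{2s}$ over the window to produce $g(A)$. The only cosmetic difference is that the paper restricts $\xi_1,\xi_3$ to $N+I_{A/4}$ and $\xi\in[0,\tfrac A8)$ rather than to "central halves," which is exactly the bookkeeping needed to guarantee $\xi_2\in 2N+I_A$ — the delicate point you already flagged.
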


\begin{proof}
We write
\begin{align*}
\ft{U_3[\phi](t)}(\xi)&=-i\mu e^{-it|\xi|}\int_0^te^{i\tau |\xi|}\mathcal{F}\Big(|e^{-i\tau|D|}\phi|^2e^{-i\tau|D|}\phi\Big)(\xi)d\tau \\
&=-i\mu e^{-it|\xi|}\iint\left(\int_0^te^{i\tau(|\xi|-|\xi_1|+|\xi_2|-|\xi_3|)}d\tau\right)\ft{\phi}(\xi_1)\cj{\ft{\phi}(\xi_2)}\ft{\phi}(\xi_3)\pmb{1}_{\xi=\xi_1-\xi_2+\xi_3}d\xi_1d\xi_3.
\end{align*}
From the definition \eqref{phi} of $\phi$, we notice that $\ft\phi$ is supported only on positive frequencies. 
Therefore, the expression under the above integral is supported on $\xi_1,\xi_2,\xi_3\geq 0$.
Next, we restrict our attention to $\xi\in [0,\frac A8)$. In particular, for such $\xi$, we have
\[|\xi|-|\xi_1|+|\xi_2|-|\xi_3|=\xi-\xi_1+\xi_2-\xi_3=0.\]
Noticing also that $\xi\in [0,\frac A8)\subset I_{\frac A4}$ and $\xi_1,\xi_3\in N+I_{\frac A4}$ yield $\xi_2=\xi_1+\xi_3-\xi\in 2N+I_A$ (and thus $\pmb{1}_{2N+I_A}(\xi_2)\equiv 1$), we then obtain that

\begin{align*}
\left|\ft{U_3[\phi](t)}(\xi)\pmb{1}_{[0,\frac A8)}(\xi)\right| &
=t\Big|\iint\ft{\phi}(\xi_1)\cj{\ft{\phi}(\xi_2)}\ft{\phi}(\xi_3)\pmb{1}_{\xi=\xi_1-\xi_2+\xi_3}
\pmb{1}_{[0,\frac A8)}(\xi)d\xi_1d\xi_3\Big|\\
&=tR^3\iint\pmb{1}_{N+I_A}(\xi_1)\pmb{1}_{2N+I_A}(\xi_2)\pmb{1}_{N+I_A}(\xi_3)\pmb{1}_{\xi=\xi_1-\xi_2+\xi_3}
\pmb{1}_{[0,\frac A8)}(\xi)d\xi_1d\xi_3\\
&\geq tR^3\int_\R \pmb{1}_{N+I_{\frac A4}}(\xi_1)d\xi_1 \int_\R \pmb{1}_{N+I_{\frac A4}}(\xi_3)d\xi_3
\geq CtR^3A^2.
\end{align*}
In conclusion, it follows that
\begin{align*}
\|U_3[\phi](t)\|_{H^s}&\geq \|\jb{\xi}^s\ft{U_3[\phi](t)}(\xi)\pmb{1}_{[0,\frac A8)}(\xi)\|_{L^2}
\geq CtR^3A^2\|\jb{\xi}^s\pmb{1}_{[0,\frac A8)}\|_{L^2}\\
&\geq CtR^3A^2 g(A).
\end{align*}

\end{proof}

We are now in the position of proving Proposition \ref{PROP:inflation}.
\begin{proof}[Proof of Proposition \ref{PROP:inflation}]
We choose $\phi$ as in \eqref{phi} with the values of $R,T,A$ to be specified later.
The condition $\|\phi\|_{H^s}<\eps$ with $0<\eps\ll 1$ is satisfied if we impose
\begin{equation}\label{4}
RA^{\frac 12}N^s\ll 1. 
\end{equation}
By Lemma \ref{LWP},
there exists a unique solution $u\in C([0,T_\ast];M_A)$
of \eqref{HW} admitting the expansion \eqref{expansion},
where $T_\ast\sim  CA^{-1}\|\phi\|_{M_A}^{-2}=CA^{-2}R^{-2}$.  
We then require that the time $T$ in Proposition \ref{PROP:inflation}
satisfies
\begin{equation}\label{5}
T\leq T_\ast\sim A^{-2}R^{-2}.
\end{equation}
Under the restrictions \eqref{4} and \eqref{5}, we now impose that 
\begin{align}
\|U_3[\phi](T)\|_{H^s}&\gg \|U_1[\phi](T)\|_{H^s}\label{8}\\
\|U_3[\phi](T)\|_{H^s}&\gg \sum_{\ell=2}^\infty \|U_{2\ell+1}[\phi] (T)\|_{H^s},\label{9}
\end{align}
in order to have $\|u(T)\|_{H^s}\gtrsim \|U_3[\phi](T)\|_{H^s}$. 
The conclusion of the proposition follows if we further impose
that 
\begin{align}\label{10}
\|U_3[\phi](T)\|_{H^s}\gg 1.
\end{align}
Owing to \eqref{1} and \eqref{3}, 
the condition \eqref{8} amounts to
\begin{align}\label{6}
RA^{\frac 12}N^s \ll TR^3A^2 g(A)
\end{align}
and \eqref{10} amounts to
\begin{align}\label{7}
1\ll TR^3A^2 g(A),
\end{align}
where $g(A)$ was defined in \eqref{2bis}.
On the other hand, 
by \eqref{2},
note that $\sum_{\ell=2}^\infty \|U_{2\ell+1}[\phi](T)\|_{H^s}$
behaves like the geometric series
\begin{align*}
\sum_{\ell=2}^\infty (CTR^2A^2)^\ell R g(A).
\end{align*}
Thus, the series converges and the first term dominates all others provided that $TR^2A^2\ll 1$. 
Combining this with \eqref{3}, it follows that \eqref{9} is satisfied provided that
\[TR^2A^2\ll 1.\]
To summarize, the conclusion 
of Proposition \ref{PROP:inflation} follows if one can choose $1\ll A\ll N$, $T\ll 1$, and $R$
such that
\begin{align}\label{11}
TR^2A^2+RA^{\frac 12}N^s\ll 1\ll
TR^3A^2
\begin{cases}
A^{s+\frac 12}, & \text{if }  -\frac 12<s\leq 0,\\
(\log A)^{\frac 12}, & \text{if }  s=-\frac 12, \\
1, & \text{if } s<-\frac 12.
\end{cases}
\end{align}
For $A\gg 1$ and $s\leq 0$, a stronger condition than \eqref{11} is: 
\begin{align}\label{12}
TR^2A^2+RA^{\frac 12}N^s\ll 1\ll
TR^3A^2.
\end{align}
Therefore, Proposition \ref{PROP:inflation} also follows if one can choose $A,T,R$
satisfying the stronger condition:
\begin{equation}\label{12bis}
T\ll1\,,\quad 1\ll A\ll N\,,\quad
RA^\frac12N^s\ll1\,,\quad
\frac T{R^2}(RA^\frac12)^4\ll1\ll \frac TR(RA^\frac12)^4\,.
\end{equation}
Set now
\begin{equation}RA^\frac12=N^\theta\,,\quad T=N^a\,,\quad R=N^b\,,\quad {\rm so~that}\quad A=N^{2\theta-2b}.
\label{param}
\end{equation}
Then, the conditions \eqref{12bis} are satisfied 
exactly when $\theta<-s$ and 
\begin{equation}
\left.\begin{array}{rcl}
a&<&0\\
\max\{0\,,\,\theta-\frac12\}<b&<&\theta\\
a-2b+4\theta & <& 0\\
a-b+4\theta & > & 0
\end{array}\right\}
\label{geomHW}
\end{equation}
In particular, notice that this imposes $0<\theta<-s$, which is possible provided that $s<0$.  
Choosing, for example, $0<\theta<\min(-s,\frac12)$
it follows that the solution of \eqref{geomHW} is nonempty.
Namely, it is the interior of the triangle $ABC$ (see Figure \ref{Fig2} below)
whose vertices have $(a,b)$-coordinates given by
\begin{equation}
A\colon(-4\theta,0)\,,\quad B\colon(-3\theta,\theta)\,,\quad C\colon(-2\theta,\theta).
\label{tri.vert}
\end{equation}

\begin{figure}[h]
\includegraphics{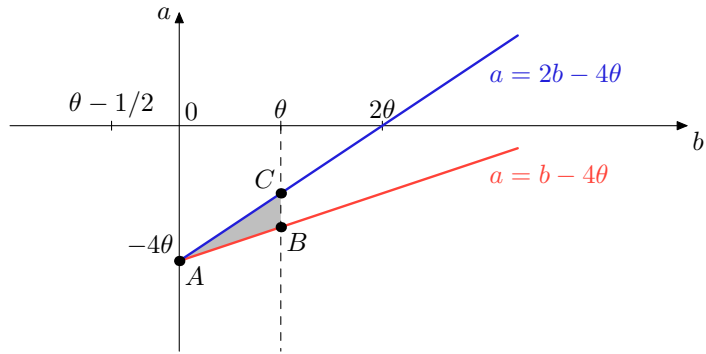}
\caption{Triangle $ABC$.}
\label{Fig2}
\end{figure}

\noi
This shows that for $s<0$ it is indeed possible to choose
$T,R$, and $A$ as in \eqref{param}, satisfying \eqref{12bis}.
Therefore, \eqref{HW} has the norm inflation property in $H^s(\R)$, $s<0$,
which concludes the proof of Proposition \ref{PROP:inflation}. 

\end{proof}

\begin{remark}\label{REM:inflation SZ}
{\rm
(i). Due to the conservation of the $L^2$-norm by the flow of \eqref{HW},
Proposition \ref{PROP:inflation} does not hold for $s=0$.

\smallskip
\noi
(ii). The cubic Szeg\H{o} equation also has the norm inflation property in $H^s_+(\R)$, $s<0$. 
The proof is essentially the same as that of Proposition \ref{PROP:inflation} , with the exception that the operator $e^{-it|D|}$ is replaced by the identity operator. 
}
\end{remark}

\section{Norm inflation for fractional NLS in $H^s(\R)$, $s<0$}\label{sec:FNLS}

We end this paper 
with the proof of Theorem \ref{fracNLS}.
Namely, we show that the norm inflation phenomenon in $H^s(\R)$, $s<0$, also occurs 
for other cubic fractional nonlinear Schr\"odinger equations.

\begin{proof}[Proof of Theorem \ref{fracNLS}]
The proof follows the same lines as that of Proposition \ref{PROP:inflation},
with the operator $e^{-it|D|}$
replaced by $e^{-it|D|^\beta}$. 
In particular, 
we consider the same initial condition $u(0)=\phi$
as in \eqref{phi}, the local existence time $T_\ast\sim A^{-2}R^{-2}$ is the same as for \eqref{HW},
and the solution of \eqref{FNLS0} is again given by an expansion $u=\sum U_k[\phi]$.
Evidently, the condition $\|\phi\|_{H^s}\ll 1$ remains $RA^{\frac 12}N^s\ll 1$.
The only essential difference appears in the proof of Lemma \ref{LEM:U3}. 
More precisely, the time integral in the proof of Lemma \ref{LEM:U3} becomes
\[\int_0^t e^{i\tau (|\xi|^\beta-|\xi_1|^\beta+|\xi_2|^\beta-|\xi_3|^\beta)}d\tau\]
with $\xi-\xi_1+\xi_2-\xi_3=0$ and $\xi_1,\xi_2,\xi_3>0$.
For $\beta\neq 1$, one can no longer choose $\xi$ so that the above phase vanishes. 
Instead, we notice that for $\xi\in I_{\frac A4}$, $\xi_1,\xi_3 \in N+I_{A}$, and $\xi_2\in 2N+I_A$, 
one has $|\xi|^\beta-|\xi_1|^\beta+|\xi_2|^\beta-|\xi_3|^\beta=O(N^\beta)$. Therefore, if we choose $|t|\ll N^{-\beta}$, we obtain 
\[\Big|\int_0^t e^{i\tau (|\xi|^\beta-|\xi_1|^\beta+|\xi_2|^\beta-|\xi_3|^\beta)}d\tau\Big|\gtrsim t,\]
which suffices for the purposes of Lemma \ref{LEM:U3}. 
The extra condition $T\ll N^{-\beta}$, however, needs to be added to the 
previous requirements $1\ll A\ll N$ and \eqref{11} from the proof of Proposition  \ref{PROP:inflation}.

\noindent
\underline{Case I}: $s\leq -\frac 12$.

As in the proof of Proposition \ref{PROP:inflation},
we verify the stronger conditions
\eqref{12bis} instead of \eqref{11}, to which we add the requirement $T\ll N^{-\beta}$.
Setting
\begin{equation*}
RA^\frac12=N^\theta\,,\quad T=N^a\,,\quad R=N^b\,,\quad {\rm so~that}\quad A=N^{2\theta-2b},
\end{equation*}
as in the proof of Proposition \ref{PROP:inflation},
these are satisfied 
exactly when $\theta<-s$ and 
\begin{equation}
\left.\begin{array}{rcl}
a&<&-\beta\\
\max\{0\,,\,\theta-\frac12\}<b&<&\theta\\
a-2b+4\theta & <& 0\\
a-b+4\theta & > & 0
\end{array}\right\}
\label{geomFNLS1}
\end{equation}

\noindent
\underline{Subcase I.a}: $s\leq -\frac 12$ and $\theta\geq \frac 12$.

In this subcase, the solution of \eqref{geomFNLS1}
is the intersection of 
the half plane $a<-\beta$
with
the interior of the quadrilateral $BCDE$
\begin{figure}[ht]
\includegraphics{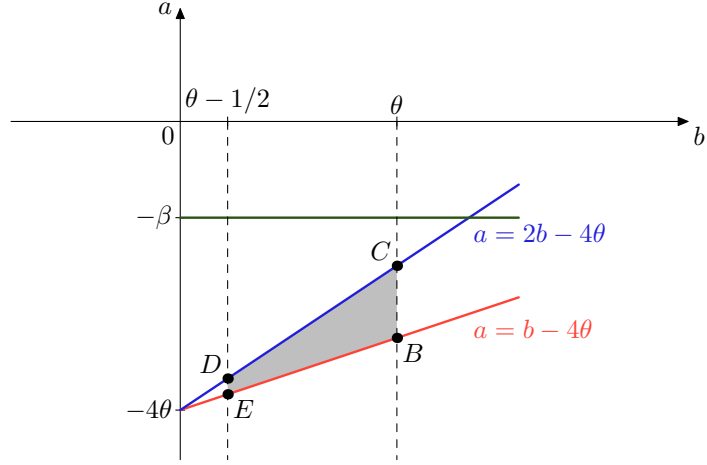}
\caption{Quadrilateral $BCDE$.}
\label{Fig3}
\end{figure}
in Figure \ref{Fig3}, whose vertices have $(a,b)$-coordinates given by
\begin{equation*}
B\colon (-3\theta,\theta)\,,\quad C\colon (-2\theta,\theta)\,,\quad
D\colon (-2\theta-1,\theta-\frac12)\,,\quad E\colon (-3\theta-\frac12,\theta-\frac12).\,
\end{equation*}
This intersection is nonempty
if the vertex $E$ of the quadrilateral $BCDE$ lies in the half plane $a<-\beta$,
that is if $-\beta>-3\theta -\frac 12$.
This imposes the following constraints on $\theta$: 
$\frac{2\beta-1}{6}<\theta<-s$ and $\theta\geq \frac 12$. 
Such $\theta$ exists provided that either $0<\beta\leq 2$ and $s<-
\frac 12$, or $\beta>2$ and $s<\frac{1-2\beta}{6}$. 

In conclusion, for
$0<\beta\leq 2$ and $s<-
\frac 12$,
or
$\beta>2$ and $s<\frac{1-2\beta}{6}$, 
conditions \eqref{12bis} and $T\ll N^{-\b}$ are indeed satisfied.

\medskip
\noindent
\underline{Subcase I.b}: $s\leq -\frac 12$ and $0<\theta< \frac 12$.

The solution of \eqref{geomFNLS1}, in this subcase,
is the intersection of the half plane $a<-\beta$ with
the interior of the triangle $ABC$ (see Figure \ref{Fig2} above)
whose vertices have $(a,b)$-coordinates given by
\begin{equation*}
A\colon(-4\theta,0)\,,\quad B\colon(-3\theta,\theta)\,,\quad C\colon(-2\theta,\theta).
\end{equation*} 
This intersection is nonempty
if the vertex $A$ of the triangle lies in the half plane $a<-\beta$,
that is $-\beta>-4\theta$.
This dictates the choice $\frac{\beta}{4}<\theta< \frac 12$,
which is possible only if $\beta<2$.  

Therefore, we obtain that if $0<\beta<2$ and $s\leq -\frac 12$
conditions \eqref{12bis} and $T\ll N^{-\b}$ are satisfied. 

\medskip
\noindent
\underline{Case II}: $s=-\frac 12$ and $\beta=2$.

In this case, 
instead of 
verifying the stronger conditions \eqref{12bis},  
we verify conditions \eqref{11}.
We choose, as in \cite{Kishimoto1}:
\begin{align*}
T=\frac{1}{N^2(\log N)^{\frac 16}}, \qquad R=1, \qquad A=\frac{N}{(\log N)^{\frac{1}{12}}}.
\end{align*}
Then, $RA^{\frac 12}N^s=\frac{1}{(\log N)^{\frac{1}{24}}}\ll 1$, $TR^2A^2=\frac{1}{(\log N)^{\frac 13}}\ll 1$, and
$TR^3A^2\cdot (\log A)^{\frac 12}\sim (\log N)^{\frac 16}\gg 1$. 
This shows that $T\ll N^{-2}$, $1\ll A\ll N$, and \eqref{11} are indeed satisfied.

\medskip
\noindent
\underline{Case III}: $-\frac 12 <s<0$.

From \eqref{11}, 
it follows that in this case it is enough to verify the following conditions:
\begin{align}\label{12bisbis}
T\ll N^{-\beta}\,,\quad 1\ll A\ll N\,,\quad
TR^2A^2+RA^{\frac 12}N^s\ll 1\ll
TR^3A^{\frac 52+s}.
\end{align}
With the choice of $T$, $R$, and $A$ from \eqref{param},
it follows that
these conditions are satisfied 
exactly when $\theta<-s$ and 
\begin{equation}
\left.\begin{array}{rcl}
a&<&-\beta\\
\theta-\frac12<b&<&\theta\\
a-2b+4\theta & <& 0\\
a-(2+2s)b+(5+2s)\theta & > & 0
\end{array}\right\}
\label{geomFNLS2}
\end{equation}
Notice that $\theta<-s$ and $s>-\frac 12$ yield $\theta<\frac 12$. 

Secondly, we notice that the above conditions cannot be simultaneously satisfied 
if $\theta\leq 0$. Indeed, assume that $\theta\leq 0$.
The last two conditions in \eqref{geomFNLS2}
require that:
\[(2+2s)b-(5+2s)\theta<a<2b-4\theta,\]
and thus $b>\frac{1+2s}{2s}\theta\geq 0$. 
On the other hand, the second condition in 
\eqref{geomFNLS2} would impose that $b\in (\theta-\frac 12, \theta)\subset (-\infty, 0)$.
This is a contradiction, and therefore \eqref{geomFNLS2} does not have a solution if $\theta\leq 0$.

From now on we assume that $0<\theta<\frac 12$.
Then, the solution of \eqref{geomFNLS2}
is the intersection of the half plane $a<-\beta$
with the interior of the triangle $A'BC$ 
in Figure \ref{Fig4} below,
\begin{figure}[h]
\includegraphics{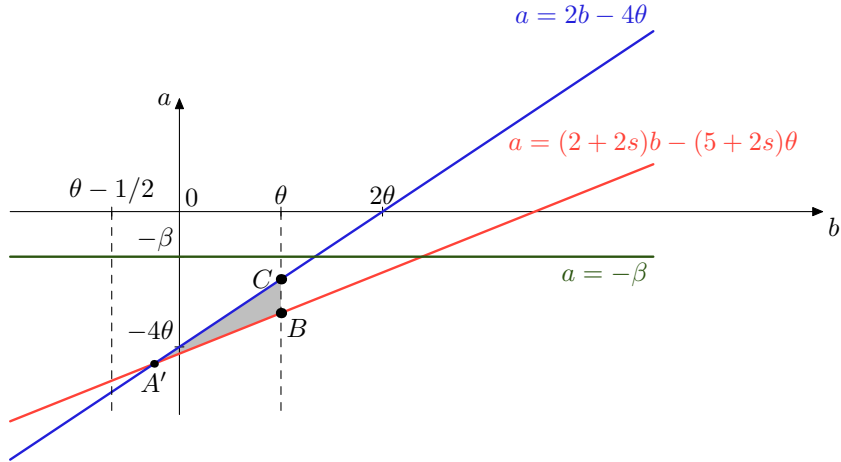}
\caption{Triangle $A'BC$.}
\label{Fig4}
\end{figure}

\noi
whose vertices have $(a,b)$-coordinates given by
\begin{equation*}
A'\colon \left(\frac{1-2s}{s}\theta, \frac{1+2s}{2s}\theta\right)\,,\quad B\colon(-3\theta,\theta)\,,\quad C\colon(-2\theta,\theta).
\end{equation*} 
This intersection is nonempty
if the vertex $A'$ of the triangle lies in the half plane $a<-\beta$,
that is if $-\beta>\frac{1-2s}{s}\theta$.
Combining this with $\theta<-s$ and $-\frac 12<s<0$ yields $-\frac 12< s<\min(\frac{1-\beta}{2},0)$ and $\beta<2$. 

Therefore, 
if $0<\beta<2$ and $-\frac 12< s<\min(\frac{1-\beta}{2},0)$, 
the conditions \eqref{12bisbis} are satisfied. 

\medskip
Collecting the information we obtained in Cases I, II, and III above,
we conclude that the norm inflation phenomenon occurs for \eqref{FNLS0} 
in the following cases:
$0<\b<1$ and $s<0$; $1\leq \beta<2$ and $s<s_{\rm crit}$; 
$\b=2$ and $s\leq s_{\rm crit}$; $\beta>2$ and $s<\frac{1-2\b}{6}$.

\end{proof}

\begin{remark}
{\rm
The method developed in this section
cannot be used to decide on the norm inflation property of \eqref{FNLS0} 
in $H^s(\R)$ in the following two cases:
$1<\b<2$ and
 $s=s_{\rm crit}$; $\b>2$ and $s=\frac{1-2\b}{6}$.
 }
\end{remark}

\begin{proof}
We follow an argument from \cite[Remark B.5]{Oh}.
In view of the requirements $A\ll N$ and $T\ll N^{-\beta}$ of our method, 
we write  
$A=EN$ with $E\ll 1$ and $T=FN^{-\beta}$ with $F\ll 1$.

For $1<\b<2$, notice that $-\frac 12<s_{\rm crit}=\frac{1-\b}{2}<0$. 
Set $G:=RA^{\frac 12}N^{s_{\rm crit}}\ll 1$.
Then,
\begin{align*}
TR^3A^{\frac 52+s_{\rm crit}}=FN^{-\beta}(RA^{\frac 12}N^{s_{\rm crit}})^3N^{-3s_{\rm crit}}A^{1+s_{\rm crit}}
=E^{1+s_{\rm crit}}FG^3\ll 1.
\end{align*}
In particular, there is no choice of $T$, $R$, and $A$ such that 
$RA^{\frac 12}N^{s_{\rm crit}}\ll 1$ and
$TR^3A^{\frac 52+s}\gg 1$,
as required by conditions \eqref{11} for the norm inflation property.

For $\b>2$, notice that $\frac{1-2\b}{6}<-\frac 12$.
Set $H:=TR^3A^2\gg 1$.
This yields $R=E^{-\frac 23}F^{-\frac 13}H^{\frac 13}N^{\frac{\b-2}{3}}$, and thus
\[RA^{\frac 12}N^{\frac{1-2\b}{6}}=E^{-\frac 16}F^{-\frac 13}H^{\frac 13}\gg 1.\]
In particular, there is no choice of $T$, $R$, and $A$ such that 
$TR^3A^2\gg 1$ and
$RA^{\frac 12}N^{\frac{1-2\b}{6}}\ll 1$, as required by conditions \eqref{11} 
for the norm inflation property.
\end{proof}

\begin{ackno}
{\rm 
A.C. was partially supported 
by a Whittaker Research Fellowship at the University of Edinburgh and 
by the European Research Council (grant agreement no. 616797).
O.P. was partially supported by the National Science Foundation under grant no. DMS-1440140 while she was in residence at the Mathematical Sciences Research Institute in Berkeley, California, during the Fall 2015 semester.
The authors would like to thank Tadahiro Oh, Nobu Kishimoto, Nikolay Tzvetkov, and Vladimir Georgiev for their generous help.
}
\end{ackno}



\end{document}